\newcommand{\FF}{{\mathbb{F}}}
\newcommand{\ZZ}{{\mathbb{Z}}}
\newcommand{\NN}{{\mathbb{N}}}
\newcommand{\calO}{{\mathcal O}}
\newcommand{\bG}{{\mathbf{G}}}
\newcommand{\bT}{{\mathbf{T}}}
\newcommand{\bZ}{{\mathbf{Z}}}
\newcommand{\bC}{{\mathbf{C}}}
\newcommand{\Aut}{{\operatorname{Aut}}}
\newcommand{\Irr}{{\operatorname{Irr}}}
\newcommand{\GL}{{\operatorname{GL}}}
\newcommand{\SL}{{\operatorname{SL}}}
\newcommand{\PSL}{{\operatorname{PSL}}}
\newcommand{\GU}{{\operatorname{GU}}}
\newcommand{\Sp}{{\operatorname{Sp}}}
\newcommand{\Syl}{{\operatorname{Syl}}}
\newcommand{\tw}[1]{{}^#1\!}
\newcommand\nor{\triangleleft\,}
\newcommand{\diag}{{\rm diag}}
\newcommand{\St}{{\sf St}}
 \def\Q{\mathbb Q}
\def\irr#1{{\rm Irr}(#1)}
\def\ibr#1{{\rm IBr}(#1)}
\def\syl#1#2{{\rm Syl}_#1(#2)}
\def\norm#1#2{{\bf N}_{#1}(#2)}
\def\sbs{\subseteq}
\def\cent#1#2{{\bf C}_{#1}(#2)}
\def\zent#1{{\bf Z}(#1)}
\def\oh#1#2{{\bf O}_{#1}(#2)}
\newcommand{\bfN}{{\mathbf N}}
\newcommand{\bfZ}{{\mathbf Z}}
\newcommand{\AAA}{{\mathsf A}}
\newcommand{\SSS}{{\mathsf S}}
\renewcommand{\mod}{\bmod \,}
\newtheorem{thm}{Theorem}[section]
\newtheorem{lem}[thm]{Lemma}
\newtheorem{conj}[thm]{Conjecture}
\newtheorem{cor}[thm]{Corollary}
\newtheorem*{thmA}{Theorem A}
\newtheorem*{thmC}{Theorem C}
\newtheorem*{thmD}{Theorem D}
\newtheorem*{conjB}{Conjecture B}
\newtheorem*{thmE}{Theorem E}
\newtheorem*{thmF}{Theorem F}
\theoremstyle{definition}
\theoremstyle{remark}
\newtheorem{rem}[thm]{Remark}
\begin{document}

\title{Brauer Correspondent Blocks with One Simple Module}

\date{May 20, 2016}

\author{Gabriel Navarro}
\address{Departament of Mathematics, Universitat de Val\`encia,
 Dr. Moliner 50, 46100 Burjassot, Spain.}
\email{gabriel.navarro@uv.es}

\author{Pham Huu Tiep}
\address{Department of Mathematics, University of Arizona, Tucson, AZ 85721, USA}
\email{tiep@math.arizona.edu}

\author{Carolina Vallejo}
\address{ICMAT, Campus Cantoblanco UAM, C/ Nicol\'as Cabrera, 13-15, 28049 Madrid, Spain}
\email{carolina.vallejo@icmat.es}

\subjclass{20C20 (primary), 20C15 (secondary)}

\thanks{
The research of the first and third authors is partially supported by the
 Spanish Ministerio de Educaci\'on y Ciencia proyecto MTM2016-76196-P and Prometeo
 Generalitat Valenciana.  
 The second author
gratefully acknowledges the support of the NSF (grants DMS-1201374 and DMS-1665014). The third author acknowledges financial support from the Spanish Ministry of Economy and Competitiveness, through the ?Severo Ochoa Programme for Centres of Excellence in R\&D? (SEV-2015-0554).} 

\begin{abstract}
One of the main problems in representation theory
is to understand the exact relationship between
Brauer corresponding blocks of finite groups.
The case where the local correspondent has a
unique simple module seems key. We characterize this situation
for the principal $p$-blocks where $p$ is odd.
\end{abstract}

\maketitle

\vskip 2pc


\section{Introduction}
 
 Let $G$ be a finite group,  let $p$ be a prime, and let $\FF$
 be an algebraically closed field of characteristic $p$.
 The blocks of $G$ are the indecomposable two-sided ideals
 of the group algebra $\FF G$. Richard Brauer associated to
 each block $B$ of $G$
 a $p$-subgroup $D$ of $G$, up to conjugation, and a block $b$ of
 the local subgroup $\norm GD$, which is  called the  Brauer first main correspondent 
 of $B$. What is the exact relationship between these two algebras, and what invariants
 they share is one of the main problems in representation theory of finite groups.
 Our major interest is in the invariants $k(B)$, $k_0(B)$ and $l(B)$
 (which are the number of complex irreducible characters in $B$,
 those of them which have  height zero, and the number of simple modules in $B$ over $\FF$, respectively)
 and their relation with $k(b)$, $k_0(b)$ and $l(b)$.
 For instance, $k_0(B)=k_0(b)$ is the Alperin-McKay conjecture, and $l(B)\ge l(b)$ 
 would be a  consequence of the Alperin weight conjecture.
 
 \medskip
 
In this paper, we wish to understand how the local condition $l(b)=1$ affects $B$, and the other way around. 
 Already the key case where $B$ and $b$ are the principal blocks (the blocks containing the trivial representation of the group)
 is hard to handle. This is our main result.

\medskip
 
\begin{thmA} \label{thmA}
Let $G$ be a finite group, let $p$ be an odd prime, and let $P \in \syl pG$. 
Let $B$ be the principal block of $G$, and let $b$ be 
the principal block of $\norm GP$.
Then $B$ contains no non-trivial $p$-rational
height zero  irreducible character if and only if
 $l(b)=1$. 
 \end{thmA}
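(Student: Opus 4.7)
The plan is to split the theorem into two halves: a purely local statement for $b=b_0(\norm GP)$, and a global-to-local transfer. Writing $H:=\norm GP$, we have $P\trianglelefteq H$, so by Schur--Zassenhaus $H$ has a Hall $p'$-subgroup $K$, and $H/P$ is a $p'$-group. I would first show the equivalence inside $H$, namely that the block $b$ has no non-trivial $p$-rational height zero character if and only if $l(b)=1$. I would then deduce the theorem by invoking a Galois-equivariant version of the Alperin--McKay correspondence for the principal block at odd primes, matching non-trivial $p$-rational height zero characters of $B$ with those of $b$.

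For the local step, assume first that $l(b)=1$. The unique IBr in $b$ is the trivial Brauer character, and the decomposition matrix of $b$ then forces $\chi(g)=\chi(1)$ for every $\chi\in\irr H\cap b$ and every $p$-regular $g\in H$; in particular $K\leq\ker\chi$ and hence $\Oh pH\leq\ker\chi$, so every such $\chi$ is inflated from the $p$-group $H/\Oh pH$. A $p$-rational character of a $p$-group $Q$ has values in $\Q(\zeta_{p^\infty})\cap\Q_{p'}=\Q$, so is rational-valued; and for $p$ odd a short induction on $|Q|$ shows that the only rational irreducible character of a $p$-group is the trivial one. Indeed, for $z\in\Omega_1(\zent Q)$, rationality forces the central scalar $\lambda$ with $\rho(z)=\lambda I$ to satisfy $\lambda^t=\lambda$ for all $t$ coprime to $p$, hence $\lambda=1$, so $\Omega_1(\zent Q)\leq\ker\chi$ and we induct on $|Q|$. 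Thus the only $p$-rational character in $b$ is $1_H$. Conversely, if $l(b)>1$ then there is a non-trivial $\phi\in\IBr(b)$; since $P\trianglelefteq H$ and $H/P$ is a $p'$-group, $\phi$ is the restriction to $p$-regular elements of a unique non-trivial $\tilde\phi\in\irr{H/P}$, whose inflation to $H$ lies in $b$, has $p'$-degree, and takes values in $\Q(\zeta_{|H/P|})\subseteq\Q_{p'}$, hence is a non-trivial $p$-rational height zero character of $b$.

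The global-to-local step is a Galois-equivariant refinement of Alperin--McKay for the principal block: a bijection (or at least an equality of cardinalities) between the non-trivial $p$-rational height zero characters of $B$ and those of $b$. Such a correspondence is the principal-block instance, at odd primes, of Navarro's Galois refinement of the McKay conjecture. The main obstacle of the proof is establishing this refined correspondence: it is expected to require the Classification of Finite Simple Groups together with the verification of a strong inductive condition in each quasisimple group, and the analysis of the groups of Lie type (both in defining and cross characteristic), where one must control the Galois action on height-zero characters of the principal block, is the most delicate part.
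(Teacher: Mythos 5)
Your local step (inside $H=\norm GP$) is essentially correct and recovers what the paper does via Brauer's Corollary~6.13 of \cite{Nav98}: since $P\nor H$, the condition $l(b)=1$ is equivalent to $H$ having a normal $p$-complement, and for odd $p$ this is in turn equivalent to $b$ having no non-trivial $p$-rational height-zero character. Indeed the paper uses exactly this translation to pass between Theorem~A and Theorem~D.

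The global step, however, is a genuine gap. You propose to invoke ``a Galois-equivariant version of the Alperin--McKay correspondence for the principal block at odd primes,'' and you acknowledge yourself that establishing it is ``the main obstacle.'' That correspondence is not a theorem one may quote: it is a strong form of the Galois--Alperin--McKay conjecture, and it was open at the time of the paper (and remains a deep piece of machinery, requiring CFSG plus an inductive condition on quasisimple groups). The paper does not prove, use, or need any such bijection. Its strategy is deliberately asymmetric. For the direction ``$\norm GP$ has a normal $p$-complement $\Rightarrow$ no non-trivial $p$-rational character in $B_0(G)$ of $p'$-degree,'' it uses the restricted McKay natural correspondence of \cite{NTV14} (Theorem~A there), which is proved only under the hypothesis that $\norm GP$ has a normal $p$-complement and is therefore available without assuming Galois--McKay. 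For the converse, the paper runs a minimal-counterexample induction (Steps~1--7 of Theorem~4.3) built on three new ingredients: a relative Glauberman correspondence that is $\mathcal{G}$-equivariant and preserves principal blocks (Theorem~E/\ref{relativeGlauberman}), a $p$-rational extension theorem generalizing Alperin--Dade (Theorem~F/\ref{teoE}, via Corollary~\ref{thecor}), and a tailored simple-group statement (Theorem~\ref{simple}, proved in \S5 using Theorems~\ref{cross} and~\ref{defi}). Crucially, the paper never constructs a bijection on $p$-rational height-zero characters: it only needs to \emph{produce} a single non-trivial such character in $B_0(G)$ when $\norm GP$ fails to have a normal $p$-complement, and this existence statement is much weaker than the equivariant correspondence you are invoking.

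So the overall architecture of your proposal would be valid if the Galois-equivariant Alperin--McKay correspondence for principal blocks were available, but as written the proof has an unfilled hole that is at least as hard as the theorem itself, and is in fact the opposite of the paper's route: the paper trades the conjectural bijection for a one-sided construction via Theorems~E and~F and a bespoke simple-group analysis.
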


\medskip
As we will point out in Conjecture \ref{Conj2} below, we have an ad hoc statement for the prime $p=2$, but to 
prove it seems presently out of reach. As it seems also out of reach to prove the following.

\medskip
\begin{conjB}
Let $G$ be a finite group, let $p$ be an odd prime, let $B$ be 
a $p$-block of $G$ and let 
$b$ be the Brauer first main correspondent of $B$.
If $B$ contains exactly one $p$-rational
height zero  irreducible character, then $l(b)=1$.
\end{conjB}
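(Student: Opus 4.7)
The plan is to reduce Conjecture B to Theorem A by transferring the $p$-rationality hypothesis from $B$ to $b$ through a Galois-equivariant refinement of the Alperin--McKay correspondence, and then analyzing the local block $b$, whose defect group is normal in $\norm GD$. The broad architecture mirrors Theorem A, but the new technical hurdle is that the hypothesis is really a statement about character fields of $\Irr_0(B)$, while the desired conclusion $l(b) = 1$ is a single modular invariant of $b$; we must bridge these through the Brauer correspondence for arbitrary, rather than principal, blocks.

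First, I would invoke a Galois-equivariant bijection $\Irr_0(B) \to \Irr_0(b)$ compatible with the action of the Galois group of $\Q(\zeta_{|G|})/\Q(\zeta_{|G|_{p'}})$ on character values: this is Navarro's Galois refinement of the Alperin--McKay conjecture. Granting such a bijection, the hypothesis transfers directly and $b$ contains exactly one $p$-rational height zero irreducible character. Second, because $D \nor \norm GD$, one may bring to bear the structure theory of blocks with normal defect group: Fong--Reynolds reduction over an $\norm GD$-orbit of characters of $D$, Clifford theory, and K\"ulshammer's description of $b$ up to Morita equivalence as a twisted group algebra of an extension of $D$ by the appropriate $p'$-section of $\norm GD$. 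The objective of this step is to reinterpret the condition ``exactly one $p$-rational height zero character'' as a statement about how this $p'$-section acts on $D$ and on $\Irr(D)$, in the same spirit in which the self-normalizer condition $\norm GP = P$ emerges in the principal-block case handled by Theorem A. Third, one would combine this with the CFSG-based analysis of the composition factors of the $p'$-section (as enters the proof of Theorem A) to conclude that $l(b)=1$.

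The main obstacle is that Navarro's Galois refinement of Alperin--McKay is widely open: its inductive condition is only partially verified, and without it there is no known mechanism to move a $p$-rational character count from $B$ inside $G$ down to $b$ inside $\norm GD$. Unlike in the principal block case, a general block $B$ has no canonical $p$-rational height zero character playing the role of the trivial character, so even a weaker qualitative statement (``$B$ has a $p$-rational height zero character $\Rightarrow$ so does $b$'') already appears to require the full Galois bijection, not merely McKay-type numerical equality. A secondary obstacle is that the Fong--Reynolds cocycle can in principle interact with $p$-rationality in ways invisible in the principal block setting, so the local classification step must be robust under twisting. These difficulties compound, which is presumably why the authors flag Conjecture B as currently out of reach even with Theorem A available.
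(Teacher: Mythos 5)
This statement is Conjecture B, which the paper does not prove; the authors explicitly state that it ``seems out of reach,'' and the only things proved in its vicinity are Theorem A (the principal-block case) and the normal-defect/$p$-solvable results of Section 6. So there is no paper proof to match, and your proposal cannot be accepted as a proof either: its very first step is the Galois-equivariant refinement of the Alperin--McKay bijection (Navarro's Galois--McKay/Alperin--McKay conjecture), which is open, and you invoke it as a black box without any reduction theorem or verified inductive condition that would make it available. A plan whose load-bearing step is an unproven conjecture has a genuine gap by definition, and you acknowledge as much in your final paragraph.

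Beyond that, the two later steps are not actually carried out and, as stated, would not go through. Even granting a Galois-equivariant bijection $\Irr_0(B)\to\Irr_0(b)$, what you would know is that $b$ (a block with normal defect group $D$ in $\norm GD$) has exactly one $p$-rational height zero character; but the paper's Lemma 6.2 characterizes $l(b)=1$ for normal defect blocks by the uniqueness of a $p$-rational character $\chi$ with $\chi^0\in\ibr{}$ (equivalently, full ramification of the canonical character), which is a different condition from ``unique $p$-rational height zero character,'' and the paper's own example ${\tt SmallGroup}(72,22)$ at $p=3$ shows the two conditions are not equivalent even for blocks with normal maximal defect. Your ``reinterpretation'' via Fong--Reynolds and K\"ulshammer's twisted group algebra is exactly the unproved local content, not a routine translation. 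Finally, your third step misidentifies where CFSG enters: in Theorem A the simple-group analysis concerns normal subgroups of $G$ itself of order divisible by $p$ (culminating in the $\PSL_2(3^{3^a})$ configuration), whereas in the local block $b$ the inertial quotient acting on $D$ is a $p'$-group, so there is no analogous composition-factor analysis to ``combine with''; what would be needed instead is a genuinely new argument controlling $p$-rationality under the Fong--Reynolds/K\"ulshammer twisting, which neither you nor the paper supplies.
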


\medskip
Outside principal blocks, the converse of Conjecture B is not true, even in blocks
with normal maximal defect. For instance, the {\tt SmallGroup}(72,22) in \cite{GAP} is a counterexample
for $p=3$.

\medskip

There is a related   characterization
of when $l(b)=1$  for $p$-solvable groups.
If $\chi$ is an ordinary character of $G$, then $\chi^0$ is the Brauer character obtained
by restricting $\chi$ to the $p$-regular elements of $G$.

\medskip

\begin{thmC}
Suppose that $G$ is $p$-solvable, with $p$ odd. Let $B$ be a $p$-block
with defect group $P$ and let $b$ be its Brauer first main correspondent.
Then $l(b)=1$ if and only if there is exactly one $p$-rational $\chi \in \irr B$
 of  height zero and such that $\chi^0 \in \ibr B$.
\end{thmC}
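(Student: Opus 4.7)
The plan is to combine Isaacs' theorem on $p$-rational Fong--Swan lifts in $p$-solvable groups with a local--global statement comparing certain Brauer-character counts in $B$ and in $b$. Write $L(B)$ for the set of $p$-rational $\chi\in\irr{B}$ of height zero with $\chi^0\in\ibr{B}$, so the theorem asserts that $l(b)=1$ if and only if $|L(B)|=1$. First I would identify $|L(B)|$ with a suitable count inside $\ibr{B}$. Since $G$ is $p$-solvable and $p$ is odd, every $\varphi\in\ibr{G}$ admits a unique $p$-rational ordinary lift $\tilde\varphi\in\irr{G}$ with $\tilde\varphi^0=\varphi$ (Isaacs), and $\tilde\varphi$ belongs to the same $p$-block as $\varphi$. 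Hence $\chi\mapsto\chi^0$ defines an injection $L(B)\hookrightarrow\ibr{B}$ whose image consists precisely of those $\varphi\in\ibr{B}$ whose unique $p$-rational lift is of height zero. As $\tilde\varphi(1)=\varphi(1)$, this amounts to $\varphi(1)_p=|G|_p/|P|$; writing $l_0(B)$ for the number of such $\varphi$, we obtain $|L(B)|=l_0(B)$.

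Next I would examine $b$. In $N=\norm{G}{P}$ the subgroup $P$ is a normal Sylow $p$-subgroup, so every Brauer character of $N$ factors through the $p'$-quotient $N/P$ and hence has $p'$-degree. Consequently every block of $N$ has defect group $P$ and all its Brauer characters are of height zero, so $l(b)=l_0(b)$. The core of the proof is then to establish $l_0(B)=l_0(b)$, since combined with the previous step this gives $|L(B)|=l(b)$ and in particular the biconditional we want. I would prove $l_0(B)=l_0(b)$ by induction on $|G|$, using Fong--Reynolds to reduce to a block primitive over $\oh{p'}{G}$ and then Dade's theorem on blocks of $p$-solvable groups with a normal defect group to reduce further. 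Equivalently, working inside Isaacs' $B_{p'}$-theory (the $p$-rational Fong--Swan lifts of $\ibr{G}$ form a subset of $\irr{G}$ that is well-behaved with respect to Clifford theory), one can construct a direct bijection $L(B)\leftrightarrow\ibr{b}$, from which the equality $|L(B)|=l(b)$ immediately follows.

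The main obstacle is this last comparison. Proving $l_0(B)=l_0(b)$ is an Alperin--McKay-type statement at the level of Brauer characters, further refined to respect both $p$-rationality and the condition that the $p$-rational lift restrict irreducibly to the $p$-regular classes; that such a refinement exists is special to $p$-solvable groups and the hypothesis that $p$ is odd (which secures the uniqueness of $p$-rational lifts). I would handle it through a careful induction using Fong--Reynolds and Dade reductions, tracking $B_{p'}$-characters throughout. The base case is $P\nor G$, in which $\norm{G}{P}=G$ and $B=b$, and there is nothing to prove.
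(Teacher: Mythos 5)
Your overall strategy matches the paper's. The paper also reduces the statement to counting height-zero Brauer characters: it uses Isaacs' unique $p$-rational lift theorem (\cite[Theorem~10.6]{Nav98}) to show that the set you call $L(B)$ has cardinality $|\IBr_0(B)|$, observes that $|\IBr_0(b)|=l(b)$, and then invokes the already-established equality $|\IBr_0(B)|=|\IBr_0(b)|$ for $p$-solvable groups, which is Theorem~23.9 of Manz--Wolf \cite{MW}. The step you correctly identify as ``the core'' is therefore not something to be reproved from scratch --- it is a known theorem, and the paper simply cites it. Your plan to redo it by an induction with Fong--Reynolds and Dade reductions would essentially be reproducing the Manz--Wolf argument; sketching it without carrying it out leaves the hardest step of the proof as a gap.

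There is also a concrete error in your second paragraph. You assert that in $N=\bfN_G(P)$ the subgroup $P$ ``is a normal Sylow $p$-subgroup,'' and conclude that every Brauer character of $N$ has $p'$-degree and that every block of $N$ has defect group $P$. This only holds when $B$ has full defect. In general $P$ is merely a defect group of $B$, so $P$ need not be a Sylow $p$-subgroup of $G$ or of $N$; $N$ may have blocks with defect group strictly larger than $P$, and Brauer characters of $N$ can have degree divisible by $p$. The correct statement (and the one actually needed) is that \emph{in the block $b$}, which has normal defect group $P$, every Brauer character has height zero. This is the content of Lemma~6.1(a) of the paper, whose proof runs through the canonical character $\theta$ of $B$, Fong--Reynolds, and the fact that the inertia group of $\theta$ modulo $P\bfC_G(P)$ is a $p'$-group (Theorem~9.22 of \cite{Nav98}). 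Your conclusion $l(b)=|\IBr_0(b)|$ is correct, but the stated justification fails for non-maximal defect.

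So: right framework and right key reductions, but you would need to (i) replace the Sylow-subgroup argument for $b$ by the normal-defect-group argument of Lemma~6.1(a), and (ii) either cite Manz--Wolf Theorem~23.9 for $|\IBr_0(B)|=|\IBr_0(b)|$ or actually carry out the induction you only outline.
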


\medskip
Unfortunately, 
the ``only if" direction of Theorem C is false outside $p$-solvable groups.
Let $G=\AAA_6$, $p=3$ and $B$ the principal block of $G$.
Then $B$ contains a unique 
$p$-rational $p'$-degree irreducible
character that lifts an irreducible
Brauer character while $l(b)=4$.
In this case, the defect groups
of $B$ are abelian. The same
situation
happens in ${\rm PSL}_2(p)$ when $p \geq 5$. 
In this case, $l(b) = (p-1)/2$ and the defect groups of
$B$ are   cyclic.
It is interesting to speculate to what extent
the ``only if" direction holds. 
\medskip

For a character theorist it is always pleasant to find
new properties of a finite group which can be read off from its
character table.  By a result of R. Brauer,
 the principal block of a group 
has a unique irreducible Brauer character if and only if it has a normal $p$-complement
(see Corollary 6.13 of \cite{Nav98}). Hence Theorem A is equivalent to  the following.

\begin{thmD}
Let $G$ be a finite group, let $p$ odd, and let $P \in \syl pG$. 
Then $\norm GP$ has a normal $p$-complement if  and only if there are no non-trivial $p$-rational $p'$-degree complex irreducible
characters in the principal block of $G$.
\end{thmD}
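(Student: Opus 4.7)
The plan is to derive Theorem~D from Theorem~A via the cited theorem of R.~Brauer: the principal block of $N := \norm GP$ has a unique irreducible Brauer character if and only if $N$ has a normal $p$-complement, so $l(b) = 1$ is equivalent to ``$N$ has a normal $p$-complement.'' Since $P$ is a defect group of the principal block $B$ of $G$, height zero in $B$ is the same as $p'$-degree, and thus Theorem~D is exactly a rewording of Theorem~A. The substantive step is therefore to prove Theorem~A, which I would attack by establishing the Galois-refined McKay-type equality
\[
k_0^{\Q}(B) = k_0^{\Q}(b),
\]
where $k_0^{\Q}$ counts the $p$-rational $p'$-degree irreducible characters in a block.

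To close the argument once this equality is available, I would carry out the local calculation in $N$. If $N$ has a normal $p$-complement $K$, then $N = P \times K$ (both $P$ and $K$ are normal with trivial intersection and product $N$); the principal block of $N$ then consists of the characters $\chi \boxtimes 1_K$ for $\chi \in \Irr(P)$, since the $p'$-group $K$ has each nontrivial character alone in its own block. Among these, the $p'$-degree characters are exactly the linear characters of $P$, and for $p$ odd the only $p$-rational linear character of the $p$-group $P$ is the trivial one. Hence $k_0^{\Q}(b) = 1$. Conversely, if $k_0^{\Q}(b) = 1$, then since $N$ is $p$-solvable (as $N/P$ is a $p'$-group) Theorem~C applies, and since the count in Theorem~C is bounded by $k_0^{\Q}(b)$ and contains $1_N$, one concludes $l(b) = 1$ and hence $N$ has a normal $p$-complement. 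Transporting this equivalence to $G$ via the McKay-type equality then yields Theorem~A, and via Brauer's theorem Theorem~D.

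The main obstacle is precisely the Galois-refined McKay equality $k_0^{\Q}(B) = k_0^{\Q}(b)$ for principal blocks at odd primes. I would prove it by induction on $|G|$, using the standard block-theoretic reductions (Clifford theory, Fong-Reynolds, factoring out $\bfO_{p'}(G)$ and central $p'$-subgroups) to reduce to an almost-simple statement, and then verify this statement group by group using the Classification of Finite Simple Groups. The Lie-type case in non-defining characteristic is the technical heart: one would use Lusztig's Jordan decomposition of characters together with the explicit description of the Galois action on Deligne-Lusztig characters to count $p$-rational $p'$-degree characters on both sides, with the alternating and sporadic cases handled separately. The hypothesis that $p$ is odd is essential at exactly the local step: a nontrivial linear character of a cyclic $2$-group of order~$2$ has rational (hence $2$-rational) values, so $k_0^{\Q}(b)$ is strictly greater than~$1$ even when $N$ has a normal $2$-complement. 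This is why the case $p = 2$ must be treated separately, in line with the authors' remark on Conjecture~2.
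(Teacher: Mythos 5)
Your reduction of Theorem~D to Theorem~A is exactly the paper's (Brauer's normal $p$-complement criterion plus the fact that height zero in the principal block means $p'$-degree), and your local analysis of $N=\norm GP$ is correct: if $N=P\times K$ with $K$ a $p'$-group then the only $p$-rational $p'$-degree member of $B_0(N)$ is $1_N$ (for $p$ odd), and conversely Theorem~C forces $l(b)=1$ from the count being~$1$. The gap is in the global step. You propose to prove the unconditional equality $k_0^{\Q}(B_0(G)) = k_0^{\Q}(B_0(\norm GP))$, but this is essentially the blockwise Galois refinement of the Alperin--McKay conjecture for principal blocks at odd primes, which is not known (even the plain McKay count $|\Irrp G| = |\Irrp{\norm GP}|$ was open for odd $p$ at the time of this paper), and your sketch of ``standard reductions plus CFSG verification'' is not a proof but an announcement of a separate, much larger project. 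The paper does not prove any such equality, and in fact its Theorem~D does not imply it.

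What the paper does instead is asymmetric and far more economical. For the easy implication it invokes Theorem~A of \cite{NTV14}, which produces a Galois-equivariant bijection $\Irrp{B_0(G)}\to\Irrp{B_0(\norm GP)}$ \emph{under the hypothesis} that $\norm GP$ already has a normal $p$-complement; this is a theorem, not the open conjecture, precisely because of that extra hypothesis. For the hard implication (no nontrivial $p$-rational $p'$-degree character in $B_0(G)$ $\Rightarrow$ $\norm GP$ has a normal $p$-complement) the paper runs a delicate induction on $|G|$ (Steps 1--7 of Theorem~4.4), whose working parts are the relative Glauberman correspondence (Theorem~E), the extension theorem (Theorem~F), Corollary~3.5 and Corollary~4.2, and a single bespoke statement about almost simple groups (Theorem~4.1/Theorem~\ref{simple}): if $S\nor G$ with $\cent GS=1$, $S$ simple nonabelian of order divisible by $p$, and $G/S$ a $p$-group, then $G$ has a self-normalizing Sylow $p$-subgroup iff $B_0(G)$ has no nontrivial $p$-rational $p'$-degree character. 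That simple-group input is a one-directional existence statement (produce one good character), which is what Theorems~5.4 and~5.5 deliver via $d$-Harish-Chandra theory and semisimple characters; it is far more tractable than counting both sides of a McKay bijection over all simple groups. So while your route is logically coherent, it substitutes an open conjecture for the theorem's actual, weaker, and provable inputs, and the proof of the main obstacle you identify is missing.
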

 
 In general, it is not easy to produce $p$-rational
 irreducible characters. Even with the strong hypotheses that  $\theta \in \irr N$ 
 is a $p$-rational character of $p'$-degree
 in the principal block of $N\nor G$, $G/N$
 is cyclic of $p'$-order and $\theta$
 extends to $G$, then
 it is not necessarily true  that $\theta$ has a $p$-rational extension to $G$.
 Our way to produce $p$-rational characters
 is indirect, by using some  results  which we believe are of independent
 interest. The first of these  is a relative  to normal subgroups version of the Glauberman correspondence.

 If $P$ is a group acting by automorphisms on $G$,
 then ${\rm Irr}_P(G)$ is the set of $P$-invariant irreducible
 characters of $G$. In Theorem E below, the Glauberman correspondence
 is obtained when $N=1$. If $\chi$ is a character, then we denote by
 $\Q(\chi)$ the smallest field containing the values of $\chi$.
   
 \medskip

\begin{thmE}\label{thmE}
Suppose that a $p$-group $P$ acts as automorphisms
on a
finite group $G$. Let $N \nor G$ be $P$-invariant such that $G/N$ is a $p'$-group.
Let $C/N=\cent{G/N}P$. Then there exists a natural bijection
$^*:{\rm Irr}_P(G) \rightarrow {\rm Irr}_P(C)$. In fact, if $\chi \in {\rm Irr}_P(G)$, then
$$\chi_C=e\chi^* + p \Delta + \Xi \, ,$$
where $\Delta$ and $\Xi$ are characters of $C$ or zero, $p$ does not divide $e$, and no irreducible
constituent of $\Xi$ lies over some $P$-invariant character of $N$. In fact, $e \equiv \pm 1 $ mod $p$.
In particular, $\Q(\chi)=\Q(\chi^*)$.
Also, if  $\chi$ has $p'$-degree, then $\chi$ lies in the principal block of $G$ if and only if $\chi^*$ lies in the principal block of $C$.
 \end{thmE}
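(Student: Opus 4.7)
The plan is induction on $|G|$: I first use Clifford theory to reduce to the case where $\chi$ lies over a $G$-invariant $P$-invariant $\theta\in\irr N$, and then apply the classical Glauberman correspondence for the $p$-group $P$ acting on the $p'$-quotient $G/N$. Given $\chi\in\Irr_P(G)$, pick an irreducible constituent $\theta$ of $\chi_N$. Since $|G:N|$ is coprime to $p$ and $G/N$ acts on $\irr N$ permutationally, the $G$-orbit of $\theta$ has $p'$-size and is permuted by $P$, so by Glauberman's lemma on coprime actions it has a $P$-fixed element; replacing $\theta$ we assume $\theta\in\Irr(N)^P$. Let $T=G_\theta$, which is $P$-invariant, and observe $(C\cap T)/N=C_{T/N}(P)$. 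Clifford supplies a unique $\psi\in\Irr(T\mid\theta)$ with $\psi^G=\chi$; uniqueness forces $\psi\in\Irr_P(T)$. If $T<G$, by induction I have $\psi^*\in\Irr_P(C\cap T)$ satisfying the analogous formula, and I set $\chi^*:=(\psi^*)^C$. Applying Mackey to $\chi_C=(\psi^G)_C$ and using that, again by Glauberman's lemma, the $P$-fixed $G$-conjugates of $\theta$ form a single $C$-orbit, the main diagonal double coset yields $e\chi^*+p\Delta$ after inducing the formula for $\psi_{C\cap T}$ up to $C$, while the remaining Mackey summands contribute the $\Xi$-term (characters of $C$ lying over non-$P$-invariant $G$-conjugates of $\theta$).

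The core case is $T=G$, in which $\theta$ is $G$-invariant. Here I invoke a $P$-equivariant isomorphism of character triples, of the kind constructed in Isaacs' chapter on projective representations, to replace $(G,N,\theta)$ by $(\hat G,\hat N,\hat\theta)$ with $\hat\theta$ extending to $\hat G$; the $P$-equivariance is arranged using cohomological vanishing of $H^i(P,A)$ for $A$ a $p'$-abelian group, which is the recurring tool throughout. Fixing a $P$-invariant extension $\tilde\theta\in\irr{\hat G}$ (which exists because the extensions form a torsor under the group of linear characters of $\hat G/\hat N$, a $p'$-group on which the $p$-group $P$ acts with vanishing $H^1$), Gallagher's theorem gives a $P$-equivariant bijection $\Irr(\hat G/\hat N)\to\Irr(\hat G\mid\hat\theta)$ via $\eta\mapsto\tilde\theta\eta$. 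The classical Glauberman correspondence $*:\Irr_P(\hat G/\hat N)\to\Irr(\hat C/\hat N)$ with $\eta_{\hat C/\hat N}=e\eta^*+p\Delta_0$ and $e\equiv\pm1\pmod p$ now produces the desired bijection at the level of triples; multiplying through by $\tilde\theta|_{\hat C}$ and transporting back through the triple isomorphism delivers $\chi_C=e\chi^*+p\Delta$ with the stated properties.

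The rationality statement $\Q(\chi)=\Q(\chi^*)$ follows because every step (Clifford correspondence, Gallagher extension, classical Glauberman, and the triple isomorphism) is Galois-equivariant and can be carried out over $\Q(\chi)$. The principal-block statement for $p'$-degree $\chi$ is obtained from the classical Glauberman correspondence's compatibility with principal blocks via Brauer's Third Main Theorem, together with the observation that Clifford reduction and the triple-isomorphism reduction preserve principal blocks when restricted to characters of $p'$-degree. The main obstacle I expect is the $P$-equivariance of the character-triple isomorphism in the core case: Isaacs' construction is standard without a $P$-action, but arranging every cocycle choice to be $P$-invariant (and simultaneously rationally natural) requires a careful application of $p$-cohomological vanishing at each step of the construction of the projective representation group, and verifying that the resulting $e$ arising from the composed reductions really does inherit the congruence $e\equiv\pm1\pmod p$ from its classical counterpart.
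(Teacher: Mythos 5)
Your reduction via Clifford and Mackey to the case where $\theta$ is $G$-invariant and $P$-invariant is essentially the same as the paper's, and your observations that the $P$-invariant constituents under $\chi$ form a single $C$-orbit and that the non-diagonal Mackey double cosets account for $\Xi$ are both correct and match the paper.

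Where you diverge is the core case, and this is where there is a genuine gap. You propose to construct a \emph{$P$-equivariant} character triple isomorphism for $(G,N,\theta)$, and you correctly flag this as "the main obstacle." But flagging it is not the same as closing it: Isaacs' Theorem 11.28 is not naturally $P$-equivariant, and making "every cocycle choice $P$-invariant" is nontrivial (averaging a $2$-cocycle over $P$ lands you in a coboundary ambiguity, and one must also keep track of the projective representation and the field of values simultaneously — this is exactly the sort of delicate machinery that is developed elsewhere in the literature on equivariant triple isomorphisms, not something one should wave at). The paper sidesteps this entirely by the trick of forming the semidirect product $\Gamma = GP$ and applying the \emph{ordinary} Isaacs triple isomorphism to $(\Gamma, N, \theta)$ (note $\theta$ is $\Gamma$-invariant since it is both $G$-invariant and $P$-invariant). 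The $P$-action is thereby encoded in the group structure of $\Gamma$: one takes $P^\tau$ to be the unique Sylow $p$-subgroup of the nilpotent group $(NP)^\tau$, and its action on $G^\tau/N^\tau$ is automatically the transported one. This reduces to $N \le \bfZ(G)$, $[N,P]=1$, whence $G = K \times N_p$ with $K$ the normal $p$-complement, and then Theorem 13.29 of Isaacs (Glauberman over a normal subgroup) finishes. No equivariance of the triple isomorphism is needed. You should adopt this $\Gamma=GP$ device; your Gallagher-plus-classical-Glauberman route for the core case can be made to work, but only after you have a genuinely $P$-equivariant reduction to the extendible case, which you have not supplied.

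The block statement is also handled too loosely. You invoke "the classical Glauberman correspondence's compatibility with principal blocks via Brauer's Third Main Theorem," but no such off-the-shelf result applies directly here: $C$ is not $\norm G P$, and the paper in fact \emph{proves} the block compatibility rather than citing it. After reducing (again via Clifford, this time citing Corollaries 6.2 and 6.7 of \cite{Nav98} to track block covering along the Clifford correspondence) to $\theta$ $G$-invariant and $\chi$ of $p'$-degree, the paper passes to $\Gamma=GP$ and $M=\bfN_\Gamma(R)N$ for $R\in\Syl_p(\Gamma)$ with $P\le R\le NP$, shows $\chi$ and $\chi^*$ extend to $\tilde\chi\in\Irr(\Gamma)$ and $\tilde\chi^*\in\Irr(M)$ of $p'$-degree, and then verifies the principal-block criterion of Problem 4.5 of \cite{Nav98} by comparing class sums: one shows $|{\rm cl}_\Gamma(x)| \equiv |{\rm cl}_M(x)| \pmod p$ using Lemma 4.16 of \cite{Nav98}, and $\tilde\chi(x)/\tilde\chi(1) \equiv \tilde\chi^*(x)/\tilde\chi^*(1) \pmod M$ from the congruence $\chi_C = e\chi^* + p\Delta$. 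This is a concrete computation that your sketch does not supply. In short: your high-level plan is right, but the two hard points — the core-case reduction and the block compatibility — are exactly the places where your argument is a placeholder rather than a proof, and the paper's $\Gamma=GP$ construction is the device that makes both tractable.
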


\medskip
Several particular cases of Theorem E
have appeared previously in the literature (see, for instance, Theorem 5.1
of \cite{IN12}).
\medskip

We will also need a result on extension of characters
that generalizes  results of Alperin and Dade (see
\cite{A} and \cite{D}).

\begin{thmF}Suppose that $N \nor G$.
Let $\theta \in \irr N$ be $p$-rational, $G$-invariant of
$p'$-degree in the principal block
of $N$, where $p$ is odd. Let $Q \in \syl pN$,
and assume that $|G:N\cent GQ|$ is a $p$-power.
Then $\theta$ uniquely determines a character $\chi \in \irr G$ in the principal
block of $G$ such that
$\chi$ is $p$-rational and $\chi_N=\theta$.
\end{thmF}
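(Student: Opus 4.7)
The plan is to construct $\chi$ in two stages via the intermediate subgroup $G_1 = N\cent GQ$, and then to deduce uniqueness from uniqueness at each stage. By hypothesis $G/G_1$ is a $p$-group. The main work is to handle the case $G = G_1$; once that is done, a standard $p$-power-quotient extension finishes the job.

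\emph{Stage 1 (the case $G = N\cent GQ$).} Inside $G_1$ the Frattini argument gives $G_1 = N\cdot L\cent GQ$ with $L = \norm NQ$ and $N\cap L\cent GQ = L$. Note that $Q$ is the normal Sylow $p$-subgroup of $L$ and $L/Q$ is a $p'$-group, so $B_0(L) = \irr(L/Q)$. I would first produce a canonical $L\cent GQ$-invariant $\tilde\theta \in B_0(L)$ of $p'$-degree with $\Q(\tilde\theta) = \Q(\theta)$ by invoking Theorem~E for an appropriate $p$-group action on $L/Q$ (a Sylow $p$-subgroup of $H = \norm GQ$ acts on $L$ by conjugation, fixing $Q$, and the induced action on $L/Q$ fits the hypothesis of Theorem~E); the bijection there preserves field of values, principal block, and equivariance, which is exactly the upgrade of the block-level Brauer first main correspondence $B_0(N)\leftrightarrow B_0(L)$ that we need at the character level. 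Next, extend $\tilde\theta$ canonically from $L$ to $L\cent GQ$ using that $\cent GQ$ centralizes $Q$ and $\tilde\theta$ has $Q$ in its kernel (an Alperin-style principal-block extension through $L\cent GQ/L\cong\cent GQ/\cent NQ$). Finally, apply Clifford theory with the decomposition $G_1 = N\cdot L\cent GQ$, $N\cap L\cent GQ = L$, to glue the two pieces into $\chi_1 \in B_0(G_1)$ extending $\theta$ and $p$-rational.

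\emph{Stage 2 (from $G_1$ to $G$) and uniqueness.} By the uniqueness of the construction in Stage 1, $\chi_1$ is $G$-invariant. Since $G/G_1$ is a $p$-group and $\chi_1$ is $p$-rational of $p'$-degree, the standard $p$-rational extension theorem for $p$-power quotients (the $p$-odd refinement of Dade) produces a unique $p$-rational $\chi \in \irr G$ with $\chi|_{G_1} = \chi_1$; and because $G/G_1$ is a $p$-group, $B_0(G)$ is the unique block of $G$ covering $B_0(G_1)$, so $\chi \in B_0(G)$. For uniqueness, any $\chi' \in \irr G$ satisfying the conclusion has $\chi'|_{G_1}$ irreducible (standard: $G/G_1$ is a $p$-group and $\chi'(1) = \theta(1)$ is prime to $p$), hence equals $\chi_1$ by Stage 1's uniqueness; the Stage 2 uniqueness then forces $\chi' = \chi$. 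The \textbf{main obstacle} is Stage 1: the character-level correspondence $\theta \leftrightarrow \tilde\theta$. Brauer's first main theorem furnishes only a block-level bijection, and to promote it to a canonical, $H$-equivariant, field-preserving bijection on $p$-rational $p'$-degree characters is precisely the role of Theorem~E; identifying the correct $p$-group action and $p'$-quotient to apply it in the setting $G = N\cent GQ$ — where $G/N \cong \cent GQ/\cent NQ$ need be neither $p'$ nor a $p$-group — is the delicate work, as is verifying that the subsequent Alperin-style extension from $L$ to $L\cent GQ$ respects $p$-rationality and the principal block.
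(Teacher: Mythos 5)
Your outer shell matches the paper: set $M=N\cent GQ$, handle $M$ first, then extend from $M$ to $G$ through the $p$-power quotient via the $p$-rational extension lemma, and deduce uniqueness from uniqueness at each stage. Stage 2 and the uniqueness bookkeeping are essentially correct. But Stage 1 as you propose it has two real gaps. First, Theorem~E is not the right instrument for producing a canonical $\tilde\theta\in B_0(L)$ from $\theta\in B_0(N)$: the relative Glauberman correspondence relates $\Irr_P(G)$ to $\Irr_P(C)$ where $C/N=\cent{G/N}P$ for a $p$-group $P$ acting on $G$ with $G/N$ a $p'$-group; it does not relate characters of a group $N$ to characters of its local subgroup $\norm NQ$. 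No choice of ``$P$-group acting on $L/Q$'' turns it into the Brauer-first-main-type transfer $B_0(N)\to B_0(\norm NQ)$ you want, and you acknowledge as much. Second, even granting $\tilde\theta$ and an extension to $\norm GQ$, the ``gluing'' step has no basis: $M=N\cdot\norm GQ$ with $N\cap\norm GQ=L$ is not a direct product, and there is no general principle producing a unique $\chi_1\in\irr M$ from compatible characters of $N$ and $\norm GQ$ agreeing on $L$.

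The paper avoids both issues by never passing through $\norm NQ$ at all. The core of Stage 1 there is Brauer's characterization of characters: for each $m\in M$ the subgroup $H=N\langle m\rangle$ satisfies $H/N$ cyclic, hence has a normal $p$-complement $V/N$ with $V=N\cent VQ$; Theorem~3.2 of \cite{NT11} (the Alperin--Dade-type extension result, used as a black box) gives a unique $\hat\theta\in B_0(V)$ with $\hat\theta_N=\theta$, and Lemma~\ref{pquotient} lifts it uniquely and $p$-rationally to $B_0(H)$. Setting $\eta(m)=\eta_{(H)}(m)$ defines a class function on $M$ whose restriction to every $E\leq M$ with $E/N$ nilpotent is an irreducible character; Theorem~8.4(a) and Lemma~8.14(c) of \cite{Isa76} then show $\eta$ is a generalized character with $[\eta,\eta]=1$, hence $\eta\in\irr M$, and Lemma~\ref{p'principal} puts it in $B_0(M)$. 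This pointwise construction via nilpotent-over-$N$ subgroups is precisely what replaces your missing character-level correspondence and gluing. If you want to salvage your Stage~1, you should replace the Theorem~E appeal by \cite[Theorem 3.2]{NT11} applied in $V$ rather than trying to go through $\norm NQ$, and replace the gluing by a Brauer-characterization argument.
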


Under the hypotheses
of Theorem F, it is false that $\theta$ has a unique $p$-rational extension in the principal block
of $G$. For instance, take $p=3$,  $G=C_3 \times \SSS_3$, $N=C_3$,
and $\theta$ the principal character of $N$. In this case, the character $\chi$
determined by Theorem F
  is the trivial character of $G$, but there is another $p$-rational extension
of $\theta$ to $G$.

\section{The Relative Glauberman Correspondence}

\noindent We follow the notation of  \cite{Isa76} for ordinary characters
and the notation of
\cite{Nav98} for   modular characters and blocks. 
In particular, if $p$ is a prime number, and ${\bf R}$ is the ring
of algebraic integers in $\mathbb C$, we choose $M$ a maximal ideal
of $\bf R$ containing $p{\bf R}$, with respect to which the Brauer
characters of any finite group $G$ are constructed. We also
let $^*: {\bf R} \rightarrow {\bf R}/M$ be the canonical
ring epimorphism. (Later on, we will also denote by $^*$ several character correspondences,
but we believe that there is no risk of confusion.)
If $N \nor G$ and $\theta \in \irr N$, then $\irr{G|\theta}$
is the set of irreducible constituents of the induced character $\theta^G$.
Also, $G_\theta$ is the stabilizer of $\theta$ in $G$.
Sometimes, we will denote by $B_0(G)$ the set of the irreducible
complex characters of $G$ which lie in the principal $p$-block of $G$,
where $p$ is a prime. By a block, we mean a $p$-block.
Throughout this paper, we will
denote by ${\mathcal G}={\rm Gal}(\bar \Q/\Q)$ the absolute Galois
group. By elementary character theory, we know that
$\mathcal G$ acts on the irreducible complex characters
of every finite group $G$. 
\medskip

We begin by
proving the following.

\begin{lem}\label{Pinvariantover} Suppose that $P$ is a $p$-group acting 
as automorphisms on a finite group $G$.
Suppose that $N \nor G$ is $P$-invariant with $G/N$ a $p'$-group.
Let $\theta \in \irr N$ be $P$-invariant. If $P$ acts trivially on $G/N$,
then every $\psi \in \irr{G|\theta}$ is $P$-invariant.
\end{lem}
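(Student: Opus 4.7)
The plan is to form the semidirect product $H = G \rtimes P$ and run Clifford theory in $H$ along both $N \triangleleft H$ and $G \triangleleft H$. First I reduce to the case where $\theta$ is $G$-invariant by replacing $G$ with the inertia subgroup $T = G_\theta$: the identity $(\theta^g)^\sigma = (\theta^\sigma)^{\sigma(g)}$ combined with $\theta^\sigma = \theta$ shows $\sigma(T) = T$, and the Clifford induction bijection $\mathrm{Irr}(T|\theta) \to \mathrm{Irr}(G|\theta)$ is $P$-equivariant, so it suffices to prove each $\xi \in \mathrm{Irr}(T|\theta)$ is $P$-invariant. Relabel so $\theta$ is $G$-invariant.

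The hypothesis that $P$ acts trivially on $G/N$ gives $[H, P] \subseteq N$, so $N \triangleleft H$ and $H/N$ is the internal direct product $(G/N) \times P$. Because $\theta$ is both $G$- and $P$-invariant, it is $H$-invariant, so Clifford theory parametrizes $\mathrm{Irr}(H|\theta)$ by the irreducible $\alpha$-projective representations of $H/N$ for a suitable factor set $\alpha$. Since $\gcd(|G/N|, |P|) = 1$, the cross term $\widehat{G/N} \otimes \widehat{P}$ in the K\"unneth decomposition of $H^2(H/N, \mathbb{C}^*)$ vanishes, and $\alpha$ factors as $\alpha_1 \cdot \alpha_2$ where $\alpha_1$ is precisely the obstruction cocycle governing $\mathrm{Irr}(G|\theta)$. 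Correspondingly, the irreducible $\alpha$-projective representations of the direct product decompose as outer tensor products $\pi_1 \boxtimes \pi_2$.

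The decisive step is to restrict and compare. A direct calculation with the projective representation of $H$ lifting $\theta$ gives $\chi_G = \pi_2(1)\,\psi$ for the $\chi \in \mathrm{Irr}(H|\theta)$ attached to $(\pi_1, \pi_2)$, where $\psi \in \mathrm{Irr}(G|\theta)$ corresponds to $\pi_1$. On the other hand, Clifford's theorem applied to $G \triangleleft H$ with $H/G \cong P$ forces $\chi_G = e \sum_\sigma \psi^\sigma$, a multiple of a single $P$-orbit sum on $\mathrm{Irr}(G|\theta)$. An irreducible character can equal such an orbit sum only when the orbit is a singleton, so $\psi$ is $P$-invariant; as $\pi_1$ (equivalently $\psi$) ranges, every element of $\mathrm{Irr}(G|\theta)$ is $P$-invariant.

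The main obstacle I expect is justifying cleanly the splitting of $\alpha$ and the tensor factorization of the $\alpha$-projective representations of $(G/N) \times P$; the K\"unneth argument is compact but needs the coprimality of $|G/N|$ and $|P|$ handled carefully. A robust backup is induction on $|G/N|$: if there is an intermediate $N \subsetneq M \subsetneq G$ with $M \triangleleft G$ (necessarily $P$-invariant, as $P$ acts trivially on $G/N$), apply the inductive hypothesis first to $(M, N, \theta)$ and then to $(G, M, \chi)$ for each $\chi \in \mathrm{Irr}(M|\theta)$; this reduces to $G/N$ simple, and the cyclic-prime-order case then follows from Gallagher plus the fact that $\sigma \mapsto \widehat\theta^\sigma \widehat\theta^{-1}$ is a homomorphism from the $p$-group $P$ into the $p'$-group $\widehat{G/N}$, hence trivial.
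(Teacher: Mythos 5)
Your main (K\"unneth) argument is a genuinely different route from the paper's, and it can be made to work, but the backup you offer has a real gap worth flagging.

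On the main argument: after the character triple reduction, the Schur multiplier of a direct product of groups of coprime order splits, so the cocycle $\alpha$ on $(G/N)\times P$ does factor as $\alpha_1\cdot\alpha_2$, the irreducible $\alpha$-projective representations do factor as $\pi_1\boxtimes\pi_2$, and restriction to $G/N$ gives $\pi_1^{\,\oplus\dim\pi_2}$; comparing with Clifford's theorem over $G\nor H$ then forces the $P$-orbit of $\psi$ to be a singleton. Since every $\psi\in\irr{G|\theta}$ occurs under some $\chi\in\irr{H|\theta}$, this does prove the lemma. What this route buys you is a structural explanation; what it costs is precisely the machinery you flag (Schur multiplier K\"unneth, tensor factorization of projective representations, and the compatibility of restriction with the character triple isomorphism). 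The paper's proof is far lighter: to show $\psi^x(g)=\psi(g)$ it observes that this only depends on $\psi_H$ for $H=N\langle g\rangle$, a $P$-invariant subgroup with $H/N$ \emph{cyclic}; then $\theta$ extends to $H$ by Isaacs 11.22, a $P$-invariant extension exists by coprime action (Isaacs 13.31), and Gallagher plus the triviality of the $P$-action on $H/N$ finishes. This avoids any cohomology.

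The backup argument, however, has a genuine hole. Inducting on $|G/N|$ through intermediate \emph{normal} subgroups $M$ only reduces you to the case $G/N$ simple, and a $p'$-group $G/N$ is not assumed solvable: it may be a nonabelian simple group, in which case you have neither a normal series to recurse on nor an extension $\widehat\theta$ to run the Gallagher/linear-character argument with (the extension hypothesis Isaacs 11.22 requires $G/N$ cyclic). Your concluding sentence only disposes of the cyclic-prime-order case, leaving the nonabelian simple case unaddressed. The paper's $N\langle g\rangle$ device is exactly the fix for this: rather than chasing normal subgroups, it uses that the value $\psi^x(g)-\psi(g)$ is a \emph{local} quantity depending only on the restriction to the (always cyclic-by-$N$) subgroup $N\langle g\rangle$, so one lands directly in the cyclic case without ever meeting a simple quotient.
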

\begin{proof}
By the Clifford correspondence (Theorem 6.11 of \cite{Isa76}),
we may assume that $\theta$ is $G$-invariant.
Let $g \in G$, $x \in P$ and $\psi \in \irr{G|\theta}$.
We want to show that $\psi^x(g)=\psi(g)$. Write $H=N\langle g \rangle$,
a $P$-invariant subgroup of $G$. By considering the irreducible
constituents of $\chi_H$, all of which lie over $\theta$, we may assume that
$H=G$. That is to say, we assume that $G/N$ is cyclic. 
Hence $\theta$ extends to $G$, by Corollary 11.22 of \cite{Isa76}.
By coprime action (Theorem 13.31 of \cite{Isa76}), 
there is $\chi \in \irr{G|\theta}$ which is $P$-invariant.
By Gallagher's theorem (Corollary 6.17 of \cite{Isa76}), every $\psi\in \irr{G|\theta}$ is of the form $\beta \chi$ for $\beta \in \irr{G/N}$.
Since $P$ acts trivially on $G/N$, then
 every $\beta \in \irr{G/N}$ is $P$-invariant, and the statement follows.
\end{proof}

\begin{lem}\label{GlaubermanLemma}
Suppose that $P$ is a $p$-group acting 
as automorphisms on a finite group $G$.
Suppose that $N \nor G$ is $P$-invariant with $G/N$ a $p'$-group, and let $C/N=\cent {G/N} P$. Suppose that $\chi \in \irr G$ is $P$-invariant. 
Then $\chi_N$ has a $P$-invariant constituent $\theta \in \irr N$, and any two such constituents are $C$-conjugate.
\end{lem}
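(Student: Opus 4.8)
The plan is to deduce the statement from Glauberman's lemma. First I would write $\Omega\subseteq\irr N$ for the set of irreducible constituents of $\chi_N$. By Clifford's theorem, $G$ permutes $\Omega$ transitively by conjugation; since characters are class functions, $N$ acts trivially on $\irr N$, and hence this action factors through $G/N$, which is a $p'$-group by hypothesis. As $N$ is $P$-invariant and $\chi$ is $P$-invariant, we have $(\chi_N)^x=\chi_N$ for all $x\in P$, so $\Omega$ is stable under the natural action of $P$ on $\irr N$. Together with the compatibility identity $(\psi^{\bar g})^x=(\psi^x)^{\bar g^x}$ for $\psi\in\Omega$, $\bar g\in G/N$ and $x\in P$, this puts us exactly in the hypotheses of Glauberman's lemma: the $p$-group $P$ acts coprimely on the $p'$-group $G/N$, which acts transitively on $\Omega$, compatibly with $P$.

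Now Glauberman's lemma (see Lemmas 13.8 and 13.9 of \cite{Isa76}) yields the two conclusions we need. First, $P$ fixes a point of $\Omega$; equivalently, $\chi_N$ admits a $P$-invariant irreducible constituent $\theta\in\irr N$. (This half can also be seen directly, since $|\Omega|=|G:G_\theta|$ divides the $p'$-number $|G:N|$, so the $p$-group $P$ must fix a point.) Second, $\cent{G/N}{P}=C/N$ acts transitively on the set of $P$-fixed points of $\Omega$. Reading this second statement back in $G$: if $\theta,\theta'\in\Omega$ are both $P$-invariant, then $\theta^c=\theta'$ for some $c\in C$, which is precisely the asserted $C$-conjugacy.

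I do not expect a genuine obstacle here. The only points that require a little care are the observation that the conjugation action of $G$ on the orbit $\Omega$ descends to the $p'$-quotient $G/N$ — this is exactly what makes the coprimality hypothesis of Glauberman's lemma available, and it is where the assumption that $G/N$ is a $p'$-group enters — and the routine verification of the compatibility identity relating the $P$-action and the $G$-action on $\irr N$. Once these are in place, the lemma is an immediate translation of Glauberman's lemma.
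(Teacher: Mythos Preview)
Your proposal is correct and follows essentially the same approach as the paper's proof: the paper simply cites Theorem 13.27 and Corollary 13.9 of \cite{Isa76}, which are precisely the existence and transitivity parts of Glauberman's lemma you invoke, applied to the action of $P$ on the Clifford orbit $\Omega$ via the $p'$-quotient $G/N$. Your write-up just makes explicit the routine verifications (descent of the $G$-action to $G/N$, $P$-stability of $\Omega$, and the compatibility identity) that underlie those citations.
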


\begin{proof}
The first part is Theorem 13.27 of \cite{Isa76}. The second part
follows from Corollary 13.9 of \cite{Isa76}.
\end{proof}

We can now prove Theorem E.

\begin{thm}[Relative Glauberman Correspondence]\label{relativeGlauberman} Suppose that $P$ is a $p$-group acting 
as automorphisms on a finite group $G$. Let $N \nor G$ be $P$-invariant such that $G/N$ is a $p'$-group.
Let $C/N=\cent{G/N}P$. Then there exists a natural bijection
$$^*:{\rm Irr}_P(G) \rightarrow {\rm Irr}_P(C).$$ 
In fact,
$$\chi_C=e\chi^* + p \Delta + \Xi \, ,$$
where $\Delta$ and $\Xi$ are characters of $C$ or zero, $e \equiv \pm 1 $ mod $p$, and no irreducible
constituent of $\Xi$ lies over some $P$-invariant character of $N$.
In particular, $$\Q(\chi)=\Q(\chi^*).$$
Also, if
$\chi$ has $p'$-degree, then $\chi$ lies in the principal block of $G$ if and only if $\chi^*$ lies in the principal block of $C$.\end{thm}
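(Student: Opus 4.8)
The plan is to build the bijection $^*$ by induction on $|G:N|$, reducing successively to the case where $G/N$ is a chief factor and then to two base cases: $G=NC$ (where $P$ acts trivially on $G/N$) and $C=N$ (the ordinary Glauberman correspondence). Concretely, fix $\chi\in{\rm Irr}_P(G)$. By Lemma~\ref{GlaubermanLemma}, $\chi_N$ has a $P$-invariant constituent $\theta$, unique up to $C$-conjugacy, so passing to $G_\theta$ and using the Clifford correspondence I may assume $\theta$ is $G$-invariant. If $N<G$, choose a $P$-invariant normal subgroup $M$ with $N<M\nor G$ and $M/N$ a chief factor of $G\rtimes P$ contained in $G/N$; since $G/N$ is a $p'$-group, $M/N$ is a $p'$-group as well. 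Apply Lemma~\ref{Pinvariantover} or the ordinary Glauberman correspondence inside $M$ (according to whether $P$ acts trivially on $M/N$ or $M/N$ is $P$-irreducible with $\mathbf{C}_{M/N}(P)=1$) to pass from $\theta$ to a canonical $P$-invariant character $\theta'$ of $M\cap C$ or of $N$; then the inductive hypothesis applied to $(G_{\theta'},M\cap C)$ or similar finishes the construction. The key point in the inductive step is that the decomposition $\chi_C=e\chi^*+p\Delta+\Xi$ is multiplicative: if $\psi_{M'}=e_1\psi^{*_1}+p\Delta_1+\Xi_1$ at the first stage and then $(\psi^{*_1})_{C}=e_2(\psi^{*_1})^{*_2}+\cdots$, composing gives the congruence $e\equiv e_1e_2\equiv\pm1$ modulo $p$ by induction, since each base case contributes a factor $\equiv\pm1$: for the Glauberman correspondence $e$ is $\pm1$ exactly (this is the content of the "$\pm$" in the character-value formula, cf.\ Isaacs Chapter~13), and for the $G=NC$ case one has $e=1$. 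The condition that no constituent of $\Xi$ lies over a $P$-invariant character of $N$ is preserved because at each stage the "error terms" live over non-$P$-invariant characters, and induction over $N<M'<G$ keeps them there.

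For the field-of-values statement $\Q(\chi)=\Q(\chi^*)$: the absolute Galois group $\mathcal G$ commutes with the action of $P$ (since $P$ acts by automorphisms of $G$, not affecting character fields), so $\mathcal G$ permutes ${\rm Irr}_P(G)$ and ${\rm Irr}_P(C)$; the bijection $^*$ is canonical, hence $\mathcal G$-equivariant, giving $\Q(\chi)=\Q(\chi^*)$ at once. Alternatively and more concretely, from $\chi_C=e\chi^*+p\Delta+\Xi$ one recovers $\chi^*$ modulo $p$ on $p$-regular classes via the reduction $^*\colon\mathbf R\to\mathbf R/M$, and the congruence $e\equiv\pm1$ pins $\chi^*$ down up to sign; combined with $\mathcal G$-equivariance this forces equality of the character fields.

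For the block statement, assume $\chi$ has $p'$-degree. First note $\chi^*$ also has $p'$-degree: reducing the degree equation modulo $p$ gives $\chi(1)\equiv\pm\chi^*(1)\pmod p$ (the terms $p\Delta(1)$ vanish and $\Xi(1)$ — one checks — is divisible by $p$, as $\Xi$ lies over non-$P$-invariant characters of $N$ whose orbit sums contribute $p$-divisible multiplicities since $P$ is a $p$-group), so $p\nmid\chi(1)$ forces $p\nmid\chi^*(1)$. Now use the standard criterion (Navarro, \cite{Nav98}, Theorem~3.?/6.?): a $p'$-degree $\chi\in{\rm Irr}(G)$ lies in $B_0(G)$ if and only if $\sum_{g\in K}\chi(g)/\chi(1)\not\equiv 0\pmod M$ fails in a controlled way — more usefully, $\chi\in B_0(G)$ iff $\chi$ does not vanish on some (equivalently all) Sylow $p$-subgroups, or via the $^*$-reduction of the central character $\omega_\chi$. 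Restricting $\omega_\chi$ to $\mathbf Z(\FF C)$ and comparing with $\omega_{\chi^*}$ using $\chi_C=e\chi^*+p\Delta+\Xi$ and $e\equiv\pm1$, one sees $\omega_\chi$ and $\omega_{\chi^*}$ reduce to the principal block central character simultaneously.

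The main obstacle I expect is the inductive bookkeeping of the error term $\Xi$ across the chief-factor reduction: one must verify that when $\theta'\in{\rm Irr}_P(M')$ is the stage-one correspondent and one inducts inside $G_{\theta'}$, the characters of $C$ appearing with a factor of $p$ or in the genuinely-orthogonal part $\Xi$ from the inner induction, together with the characters of $C$ lying over $M'$-constituents of $\chi$ that are \emph{not} in the $C$-orbit of $\theta'$, really do land over non-$P$-invariant characters of $N$ — this needs a careful Clifford-theoretic analysis of how $P$-invariance propagates between $N$, $M'$, and $C\cap M'$, using that $|M'/N|$ and $|G/N|$ are $p'$-numbers so that $P$-orbits on the relevant character sets have $p$-power size and fixed points are governed by Lemma~\ref{GlaubermanLemma}. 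The multiplicativity of $e\bmod p$ and the block statement are, by contrast, comparatively formal once the base cases are in hand.
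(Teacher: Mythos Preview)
Your Clifford/Mackey reduction to the case where the $P$-invariant constituent $\theta\in\Irr(N)$ is $G$-invariant is the same as the paper's, and the $\mathcal G$-equivariance argument for $\Q(\chi)=\Q(\chi^*)$ is fine. The gap is in what you do once $\theta$ is $G$-invariant. Your plan is to induct through a chief factor $M/N$ of $G\rtimes P$, splitting into the cases ``$P$ trivial on $M/N$'' and ``$\cent{M/N}{P}=1$'' and invoking ordinary Glauberman in the second. But this dichotomy need not hold (a $GP$-chief factor is $GP$-irreducible, not $P$-irreducible), and even when $\cent{M/N}{P}=1$ you cannot apply ordinary Glauberman ``inside $M$'': Glauberman requires a $p$-group acting on a $p'$-group, while $M$ itself need not be a $p'$-group, and Gallagher does not identify $\Irr_P(M|\theta)$ with $\Irr_P(M/N)$ unless $\theta$ extends to $M$, which you have not arranged. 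The paper bypasses all of this: once $\theta$ is $G$-invariant it replaces the character triple $(\Gamma,N,\theta)$, $\Gamma=GP$, by an isomorphic triple with $N$ central. Then $G$ acquires a central Sylow $p$-subgroup $N_p$ and a normal $p$-complement $K$, so $\Irr_P(G|\theta)=\{\mu\times\theta_p:\mu\in\Irr_P(K|\theta_{p'})\}$, and the ordinary Glauberman map $\Irr_P(K)\to\Irr(\cent KP)$ (which respects lying over the central $\theta_{p'}$) gives the bijection directly, with $\chi_C=e\chi^*+p\Delta$ and no further induction.

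Your block argument is also incomplete. Saying that one ``restricts $\omega_\chi$ to $\mathbf Z(\FF C)$'' does not make sense as stated: $\omega_\chi$ and $\omega_{\chi^*}$ live on the centres of different group algebras and there is no natural restriction between them. The paper again works in $\Gamma=GP$: after reducing to $\theta$ $G$-invariant (so $\Xi=0$), it extends the $p'$-degree characters $\chi$ and $\chi^*$ to $\tilde\chi\in\Irr(\Gamma)$ and $\tilde\chi^*\in\Irr(M)$ with $M=N\norm\Gamma R$ and $R\in\Syl_p(\Gamma)$, and then applies the defect-class criterion (Problem~4.5 of \cite{Nav98}). The relevant $p$-regular elements $x$ with $R\leq\cent\Gamma x$ automatically lie in $C$; a fixed-point count gives $|{\rm cl}_\Gamma(x)|\equiv|{\rm cl}_M(x)|\pmod p$, and $\chi_C=e\chi^*+p\Delta$ yields $\chi^*(1)\chi(x)\equiv\chi(1)\chi^*(x)\pmod p$, which transfers the principal-block criterion from one side to the other.
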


\begin{proof} We first prove the   part of the statement concerning the existence of a  bijection. 

Notice that $C$ acts on ${\rm Irr}_P(N)$. Indeed, 
if $\theta \in {\rm Irr}_P(N)$,
$x \in P$ and $c \in C$, then $c^x=nc$ for some $n \in N$.
Hence $(\theta^c)^x =\theta^{x^{-1}cx}=\theta^c$,
and $\theta^c$ is $P$-invariant.
Let $\Lambda$ be a complete set of representatives of the $C$-orbits on ${\rm Irr}_P(N)$.
We claim that
$${\rm Irr}_P(G)= \bigcup_{\theta \in {\Lambda}} {\rm Irr}_P(G|\theta) \,  $$
is a disjoint union.
Let $\chi \in {\rm Irr}_P(G)$. By Lemma \ref{GlaubermanLemma}
we have that $\chi_N$ has a $P$-invariant irreducible constituent $\theta$, and that all of them are $C$-conjugate.
This proves the claim.
By the same argument, we have that 
$${\rm Irr}_P(C)= \bigcup_{\theta \in {\Lambda}} {\rm Irr}_P(C|\theta) \, $$
is a disjoint union. Then
  it suffices to prove that there are bijections
$$^*:{\rm Irr}_P(G|\theta) \rightarrow {\rm Irr}_P(C|\theta)$$
satisfying the conditions in the statement of the theorem.
We prove this  by induction on $|G:N|$.

\smallskip

Let $\chi \in {\rm Irr}_P(G)$, let $\theta \in {\Lambda}$
be under $\chi$, let $T$ be the stabilizer of $\theta$ in $G$, and let $\psi
 \in \irr{T|\theta}$ be the Clifford correspondent of $\chi$.
Let $\mathbb T$ be a set of representatives of the double cosets of $T$ and $C$ in $G$ with $1 \in \mathbb T$. By the
Mackey formula, we have that
$$\chi_C= (\psi_{T\cap C})^C + \delta \, ,$$
where $\delta =\sum_{1 \neq t \in \mathbb T}(\psi^t_{T^t\cap C})^C$. We claim that no irreducible constituent of $\delta$ lies over $\theta$. Otherwise, let  $\eta$ be an irreducible constituent of $(\psi^t_{T^t\cap C})^C$ for some $1\neq t \in \mathbb T$ lying over $\theta$. Then $\eta$ lies over $\theta$ and over $\theta^t$. By Clifford's theorem (Theorem 6.2 of \cite{Isa76}), we have that
 $\theta=\theta^{tc}$ for some $c \in C$, but this is a contradiction since $1\neq t\in \mathbb T$. This proves the claim.
 
 Notice that, in fact, no irreducible constituent of $\delta$ lies over any $P$-invariant irreducible character $\tau \in \irr N$.
Otherwise,  $\tau$ and $\theta$
are  $P$-invariant characters of $N$ lying under $\chi$. By Lemma \ref{GlaubermanLemma} $\tau$ is $C$-conjugate to $\theta$, and thus
$\theta$ lies under $\delta$, a contradiction. 
 
 Suppose now that $T<G$. By induction,
 there is a bijection $$^*:{\rm Irr}_P(T|\theta) \rightarrow {\rm Irr}_P(T\cap C|\theta)$$
 such that $\psi_{T\cap C}=e\psi^* + p \Delta$,
 where $\Delta$ is a character or zero, and $e \equiv \pm 1 (\mod p)$. 
 Then 
 $$\chi_C=(\psi_{T\cap C})^C + \delta =e(\psi^*)^C + p \Delta^C + \delta \, .
 $$
By   the Clifford correspondence (Theorem 6.11 of \cite{Isa76}), we know
that induction defines bijections
 ${\rm Irr}_P(T|\theta) \rightarrow {\rm Irr}_P(G|\theta)$
 and  
 ${\rm Irr}_P(T\cap C|\theta) \rightarrow {\rm Irr}_P(C|\theta)$.
 Since 
 $$\chi^*=(\psi^*)^C \in \irr{C|\theta},$$
 we conclude that
 we may assume that $\theta$ is $G$-invariant.

 \smallskip

We have to show that  
for $\chi \in {\rm Irr}_P(G|\theta)$, we have that
that $\chi_C=e\chi^* + p\Delta$, where $\chi^* \in \irr C$,
$p$ does not divide $e$,
and that the map $\chi \mapsto \chi^*$ is a bijection
${\rm Irr}_P(G|\theta) \rightarrow {\rm Irr}_P(C|\theta)$.
We consider the semidirect product $\Gamma=GP$ of $G$ by $P$. Since $\theta$ is $\Gamma$-invariant, we have that $(\Gamma, N, \theta)$ is a character triple.
By Theorem 11.28 of \cite{Isa76} there is an isomorphism $(\tau, \sigma)\colon (\Gamma, N, \theta) \rightarrow (\Gamma^\tau, N^\tau, \theta^\tau)$  of character triples, where $N^\tau$
is central in $\Gamma^\tau$. 
Recall that $\tau: \Gamma/N \rightarrow \Gamma^\tau/N^\tau$
is a group isomorphism.
We are going to write
$\tau(H/N)=H^\tau/N^\tau$
for every subgroup $N \leq H \leq \Gamma$. Since $(NP)^\tau/N^\tau$ is
a $p$-subgroup of $\Gamma^\tau/N^\tau$, and $N^\tau$ is central, then $(NP)^\tau$ has a unique Sylow $p$-subgroup which we denote by 
$P^\tau$. Now $P^\tau$ acts on $G^\tau/N^\tau$ the same way as $(PN)^\tau$ acts on $G^\tau/N^\tau$. Hence, by the properties of
character triple isomorphisms in Definition 11.23 of
\cite{Isa76}, it is  
 no loss to  assume that $N \leq \zent \Gamma$. 
 Hence we may assume that
 $[N,P]=1$ and that $N \leq \zent G$.
 In particular, $G$ has a central Sylow $p$-subgroup
 $N_p$, a normal $p$-complement $K$, and
 in particular $C=\cent K P \times N_p$. 
 Write $\theta=\theta_{p'} \times \theta_p$,
 where $\theta_{p'}=\theta_{K\cap N}$
 and $\theta_p=\theta_{N_p}$.
 We have that
 $${\rm Irr}_P(G|\theta)=\{ \mu \times \theta_p \, |\, \mu \in 
 {\rm Irr}_P(K|\theta_{p'}) \}$$ 
 and
 $${\rm Irr}_P(C|\theta)={\rm Irr}(C|\theta)=\{ \epsilon \times \theta_p
  \, |\, \epsilon \in \irr{C\cap K|\theta_{p'}} \}.$$
  By  Theorem 13.29 of \cite{Isa76},
  we have that
  the Glauberman correspondence 
$$^*: {\rm Irr}_P(K) \rightarrow \irr{C\cap K}$$
  sends ${\rm Irr}_P(K|\theta_{p'})$ bijectively onto 
   $\irr{C\cap K|\theta_{p'}}$. 
   Since   $\mu_{C\cap K}=e\mu^*  + p \Delta$,
   where $e \equiv \pm 1 (\mod p)$, the first part of the proof of the
   statement
   is now complete.
 
\medskip
The action of the absolute Galois group ${\mathcal G}$ on characters
commutes with the action of $P$ and with restriction
of characters. Hence our map
$^*:{\rm Irr}_P(G) \rightarrow {\rm Irr}_P(C)$ is
$\mathcal G$-equivariant. This  implies that
$$\Q(\chi)=\Q(\chi^*)$$
for $\chi \in {\rm Irr}_P(G)$.

\smallskip

We finally prove the statement about blocks.  Let $\chi \in \irr G$ be $P$-invariant of $p'$-degree. We have that $\chi_C=e\chi^*+p\Delta+\Xi$, where $p$ does not divide $e$ and no irreducible constituent of $\Xi$ 
lies over a $P$-invariant character of $N$. 
We prove that $\chi$ lies in the principal block
of $G$ if and only if $\chi^*$
lies in the principal block of $C$. We proceed
 by induction on $|G:N|$. 

Let $\theta \in \irr N$ be $P$-invariant under $\chi$. Let $T$
be the stabilizer of $\theta$
in $G$, and let $\psi \in \irr{T|\theta}$ be the Clifford correspondent of $\chi$. We have that  $\psi(1)$ is a $p'$-number and $\psi$ is $P$-invariant. By the first part of the proof $\psi_{T\cap C}=f\psi^*+p\Delta'$, where $p$ does not divide $f$,
and we know that   $\psi^* \in \irr{T\cap C}$ is the Clifford correspondent of $\chi^*$. By induction,
 if $T<G$, then $\psi  \in B_0(T)$ if and only if $\psi^* \in B_0(T\cap C)$. Thus, in this case the statement follows from  
 Corollaries 6.2 and  6.7 of \cite{Nav98}.
 \smallskip

We may assume that $\theta$ is $G$-invariant, 
and therefore we have that
$$\chi_C=e\chi^*+p\Delta$$ and so $\chi^*$ has $p'$-degree. Again, let $\Gamma=GP$ be the semidirect product of $G$ and $P$. Since $NP$ has $p'$-index in $\Gamma$, we can choose $P \leq R $ a Sylow
$p$-subgroup of $\Gamma$ contained in $NP$, so that $NP=NR$. 
Also $\norm {\Gamma/N}{NP/N}=\norm {\Gamma/N}{NR/N}$,
and we see that
$\cent{G/N}P=\cent{G/N}R$ and that ${\rm Irr}_P(G)={\rm Irr}_R(G)$.
 Write 
 $$M/N=\norm {\Gamma/N}{NR/N}=N \norm \Gamma R/N,$$ 
so that  $M \cap G=C$.
By Corollary 9.6 of \cite{Nav98}, let $B$ be the unique block of $\Gamma$ covering the block of $\chi$ and let $b$ be the unique block
of $M$ covering the block of $\chi^*$. 
Since $\chi$ has $p'$-degree, it enters with $p'$-multiplicity in $(1_1)^G=((1_P)^\Gamma)_G$. If $\psi \in \irr{\Gamma}$ lies over
$\chi$, then $[\psi_G, \chi]$ is a $p$-power, by Corollary 11.29 of \cite{Isa76} and
and using that $\chi$ is $\Gamma$-invariant. 
Therefore $\chi$ extends to
some $\tilde\chi \in \irr{\Gamma}$.
By the same argument $\chi^*$ extends
to $\tilde\chi^* \in \irr{M}$. Of course, $B=B_0(\Gamma)$ if and only if  $\chi$ belongs to the principal block of $G$ and $b=B_0(M)$ if  and only if 
$\chi^*$ belongs to the principal block of $C$ (using Corollary 9.6 of \cite{Nav98}).

Since $\tilde\chi$ and
$\tilde\chi^*$ have $p'$-degree,
then we know that $B$ and $b$ have defect group $R$, by Theorem 4.6
of \cite{Nav98}.
By Problem 4.5 of \cite{Nav98}, we have that $B=B_0(\Gamma)$ if  and only if
$$\left (\frac{|{\rm cl}_\Gamma(x)|\tilde \chi(x)}{\tilde\chi(1)} \right )^*=|{\rm cl}_\Gamma(x)|^*$$ for every
$p$-regular $x \in \Gamma$
such that $R \in \syl p{\cent \Gamma x}$.
 Similarly, $b=B_0(M)$ if and only if  
 $$\left (\frac{|{\rm cl}_M(y)|\tilde \chi^*(y)}{\tilde\chi^*(1)} \right)^*=|{\rm cl}_M(y)|^*$$ for 
 every $p$-regular
  $y \in M$ such that $R \in \syl p{\cent M y}$

\smallskip

Suppose that $K={\rm cl}_\Gamma(x)$, where
$x$ is $p$-regular and $R \in \syl p{\cent \Gamma x}$. 
Notice that $x \in G$, since $\Gamma/G$ is
a $p$-group. Now 
$$\cent GR N/N  \leq \cent {G/N}R=\cent{G/N}P=C/N,$$ 
and therefore $x \in \cent GR \leq C$.
Let $L={\rm cl}_M(x)$.
By Lemma 4.16 of \cite{Nav98} we have that $K\cap \cent \Gamma R$ is the
 conjugacy class of $x$ in  $\norm \Gamma R$. Also, 
 $$|K| \equiv |K\cap \cent \Gamma R| (\mod p)$$
 by counting. By the same argument,
 $L\cap \cent \Gamma R$ is the  conjugacy class of  $x$
 in $\norm \Gamma R$ and also
  $$|L| \equiv |L\cap \cent \Gamma R| (\mod p).$$
  Since $K\cap \cent \Gamma R=L \cap \cent \Gamma R$,
  we see that $|K| \equiv |L| (\mod p)$.
    Also, since
$$\chi_C=e\chi^*+p\Delta,$$
    we have that
    $\chi(x) \equiv e\chi^*(x) (\mod p)$ and $\chi(1) \equiv e\chi^*(1) (\mod p)$, where $p$ does not divide $e$. 
    Thus $\chi^*(1)\chi(x)\equiv \chi(1)\chi^*(x) (\mod p)$ and
    $\tilde\chi^*(1) \tilde\chi(x) \equiv \tilde\chi(1) \tilde\chi^*(x) (\mod p)$. 
    Since $|K|=|L| (\mod p)$,
    we deduce that
   $$|K|\tilde\chi^*(1) \tilde\chi(x) \equiv |L| \tilde\chi(1) \tilde\chi^*(x) \mod p \, .$$  
Using the fact that the degrees of $\chi$ and $\chi^*$ are $p'$-numbers,
    we deduce that
$$\left ( \frac{|K|\tilde\chi(x)}{\tilde\chi(1)} \right )^*=\left ( \frac{|L|\tilde\chi^*(x)}{\tilde\chi^*(1)}\right )^*.$$
The result follows from the discussion in the preceding paragraph
using, as we have proved, that $|{\rm cl}_\Gamma(x)|^*=|{\rm cl}_M(x)|^*$
for every $p$-regular $x \in \cent \Gamma R$.
 \end{proof}
 
\section{An extension theorem}
The aim of this section is to prove Theorem F. We first need some lemmas.  

\begin{lem}\label{p'principal} Suppose that $N \nor G$ and that
 $\psi \in \irr G$ has $p'$-degree and is such that $\psi_N=\theta \in \irr N$.
 Assume that $\psi_H$ belongs to the principal block of $H$
 whenever $H/N$ is a cyclic $p'$-group. Then $\psi$ belongs to the principal block of $G$.\end{lem}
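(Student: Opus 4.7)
The plan is to apply the central-character criterion used in the proof of Theorem~\ref{relativeGlauberman}: by Problem~4.5 of \cite{Nav98}, a $p'$-degree $\chi\in\irr G$ lies in $B_0(G)$ if and only if
$$\left(\frac{|{\rm cl}_G(x)|\chi(x)}{\chi(1)}\right)^{*}=|{\rm cl}_G(x)|^{*}$$
for every $p$-regular $x\in G$ whose centralizer $\cent Gx$ contains a Sylow $p$-subgroup of $G$. For each such $x$ the natural test subgroup supplied by the hypothesis is $H=N\langle x\rangle$, because $x$ being $p$-regular forces $H/N$ to be cyclic of $p'$-order.

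Fix such an $x$ and choose $P\in\syl pG$ with $P\leq\cent Gx$; set $H=N\langle x\rangle$. The hypothesis gives $\psi_H\in B_0(H)$, and because $\psi_N=\theta$ is irreducible, $\psi_H\in\irr H$ is an extension of $\theta$ of $p'$-degree. To apply the analogous Problem~4.5 criterion to $\psi_H$ at $x$, I must check that $\cent Hx$ contains a Sylow $p$-subgroup of $H$. A direct computation gives $\cent Hx=\cent Nx\langle x\rangle$, and $P\cap N$ is a Sylow $p$-subgroup of $N$, and hence also of $H$, because $H/N$ is a $p'$-group. The chain $P\cap N\leq\cent Nx\leq\cent Hx$ therefore places a Sylow $p$-subgroup of $H$ inside $\cent Hx$, so $|{\rm cl}_H(x)|=|H:\cent Hx|$ is coprime to $p$.

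Applying Problem~4.5 of \cite{Nav98} to $\psi_H\in B_0(H)$ at $x$ then yields
$$\left(\frac{|{\rm cl}_H(x)|\psi(x)}{\psi(1)}\right)^{*}=|{\rm cl}_H(x)|^{*}.$$
Since $|{\rm cl}_H(x)|^{*}$ and $\psi(1)^{*}$ are units in ${\bf R}/M$, cancellation gives $\psi(x)^{*}=\psi(1)^{*}$; multiplying back by $|{\rm cl}_G(x)|^{*}$ and dividing by the unit $\psi(1)^{*}$ then delivers the required $G$-congruence. As $x$ was arbitrary among Sylow-centralizing $p$-regular elements, the criterion is satisfied and $\psi\in B_0(G)$. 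The only non-formal step is the Sylow verification for $\cent Hx$, which is precisely what makes $|{\rm cl}_H(x)|$ invertible modulo $p$ and lets the central-character congruence transfer cleanly from $H$ to $G$; everything else is routine bookkeeping and I anticipate no deeper obstacle.
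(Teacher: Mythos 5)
Your proof is correct and follows essentially the same route as the paper: both invoke the central-character criterion of Problem~4.5 of \cite{Nav98}, pass to $H=N\langle x\rangle$ for a $p$-regular $x$ with $P\leq\cent Gx$, verify that $P\cap N$ is a Sylow $p$-subgroup of $H$ sitting inside $\cent Hx$ so that $|{\rm cl}_H(x)|$ is a $p'$-number, and then read off $\psi(x)^*=\psi(1)^*$ from $\psi_H\in B_0(H)$. The computation $\cent Hx=\cent Nx\langle x\rangle$ you spell out is not strictly needed (the containment $P\cap N\leq\cent Nx\leq\cent Hx$ suffices), but everything else matches the paper's argument step for step.
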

\begin{proof} 
Since $\psi$ lies in a block
of maximal defect, by Problem 4.5 of \cite{Nav98},
 we want to show that
 $$\left( {|K|\psi(x) \over \psi(1)} \right)^*=|K|^*  ,$$
 where $K={\rm cl}_G(x)$ is the conjugacy class of
 a $p$-regular $x \in G$ 
 with $|G:\cent Gx|$  a $p'$-number. 
Since $\psi(1)$ is not divisible by $p$,
 it suffices to show that $\psi(x)^*=\psi(1)^*$.
Let $H=N\langle x\rangle$. 
We know that there is 
$P \in \syl pG$ such that $P \leq \cent Gx$.
Let $Q=P\cap N \in \syl p N$, so that 
 $Q  \leq \cent N x \leq \cent Hx$.
Since $H/N$ is a $p'$-number,
it follows that $Q \in \syl pH$. 
In particular, $p$ does not divide $|L|$, where $L={\rm cl}_H(x)$.
By hypothesis  $\psi_H$ belongs to the principal block
of $H$, and we conclude that
$$|L|^* \psi(x)^*=|L|^*\psi(1)^*.$$ 
Then
$\psi(x)^*=\psi(1)^*$, as desired. \end{proof}
\medskip
We remind the reader that, in general,
 if $\psi \in \irr G$ lies in the principal
block of $G$ and $\psi_H \in \irr H$, then $\psi_H$ needs not
to be in the principal block of $H$. For instance, take $G=\AAA_4$,  
$p=2$, and $H$ is a Sylow 3-subgroup of $G$. However, the following 
statement holds.

\begin{lem}\label{sub}
Suppose that $\psi \in \irr G$ lies in the principal block
of $G$, and assume that $H \nor\nor G$. 
If $\psi_H \in \irr H$, then $\psi_H$ lies in the principal block of
$H$.
\end{lem}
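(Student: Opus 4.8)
The plan is to reduce to the case where $H$ is normal in $G$ by walking up a subnormal chain, and then to read off the conclusion from the Clifford theory of blocks. For the reduction I would induct on the length of a shortest subnormal series $H = H_0 \nor H_1 \nor \cdots \nor H_n = G$; if $n=0$ there is nothing to prove and if $n = 1$ we are in the normal case treated below. If $n \ge 2$, set $M = H_{n-1}$, so that $M \nor G$, $H \le M < G$, and $H \nor\nor M$ via a strictly shorter series. For any irreducible constituent $\mu$ of $\psi_M$, the restriction $\mu_H$ is a nonzero subcharacter of the irreducible character $\psi_H = (\psi_M)_H$, whence $\mu_H = \psi_H$ and $\mu(1) = \psi(1) = \psi_M(1)$; therefore $\psi_M = \mu \in \irr M$. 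By the normal case, $\psi_M \in B_0(M)$, and the inductive hypothesis applied to $H \nor\nor M$ then yields $\psi_H \in B_0(H)$. So it suffices to treat $H = N \nor G$.

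In the normal case, write $\theta = \psi_N \in \irr N$ and let $b$ be the block of $N$ containing $\theta$. Since $\theta$ lies under $\psi$ and $\psi \in B_0(G)$, the principal block $B_0(G)$ covers $b$. I would then invoke the standard fact from the Clifford theory of blocks (see Chapter 9 of \cite{Nav98}) that the blocks of $N$ covered by a fixed block of $G$ form a single $G$-orbit. As $B_0(G)$ covers $B_0(N)$ — because $1_G \in B_0(G)$ and $(1_G)_N = 1_N \in B_0(N)$ — and $B_0(N)$ is a $G$-invariant block, this orbit must be the singleton $\{B_0(N)\}$. Hence $b = B_0(N)$, i.e. $\theta = \psi_H$ lies in the principal block of $N = H$, as claimed.

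I do not expect a genuine obstacle: the argument is short, and the only points requiring a little care are the elementary observation that $\psi$ restricts irreducibly along every step of the subnormal chain once $\psi_H$ is irreducible, and quoting the block-covering theorem in the right form. If one prefers to avoid the ``single $G$-orbit'' statement, the last step can instead be obtained from the results on principal blocks and normal subgroups already used in the excerpt (the analogues of Corollaries 6.2 and 6.7 of \cite{Nav98}), which give directly that every irreducible constituent of $\psi_N$ lies in $B_0(N)$ whenever $\psi \in B_0(G)$.
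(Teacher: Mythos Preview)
Your proof is correct and follows essentially the same approach as the paper: reduce to the normal case by induction along a subnormal chain (the paper inducts on $|G:H|$, you on the length of the chain, which amounts to the same thing), and then invoke the Clifford theory of blocks. Your explicit unpacking of the normal case---that $B_0(G)$ covers the block of $\psi_N$, that the covered blocks form a single $G$-orbit, and that $B_0(N)$ is $G$-invariant---is precisely the content of Theorem~9.2 of \cite{Nav98}, which the paper simply cites.
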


\begin{proof}
Arguing by induction on $|G:H|$,  we may assume that $H \nor G$.
Then the result  follows by Theorem 9.2 of \cite{Nav98}.
\end{proof}

\begin{lem}\label{pquotient}
Let $K \nor G$ with $G/K$ being a $p$-group and $p > 2$. If $\gamma \in \irr K$ is $p$-rational and $G$-invariant,
then $\gamma^G$ contains a unique $p$-rational irreducible
constituent $\hat \gamma \in \irr G$.
Furthermore, $\gamma$ lies in the principal block of $K$ if and only
if $\hat\gamma$ lies in the principal block of $G$.
\end{lem}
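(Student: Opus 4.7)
My plan is to construct $\hat\gamma$ by induction on $|G/K|$, ascending through a chain of normal subgroups of $G$ whose successive quotients are central of order $p$ in the $p$-group $G/K$. In the inductive step I pick $K \leq L \leq G$ with $L \nor G$ and $L/K$ of order $p$ central in $G/K$. Since $L/K$ is cyclic and $\gamma$ is $L$-invariant, Corollary 11.22 of \cite{Isa76} furnishes an extension of $\gamma$ to $L$, and the $p$ extensions form an $\irr{L/K}$-torsor. The crucial claim is that exactly one of the $p$ extensions, call it $\gamma'$, is $p$-rational. By that uniqueness, together with the fact that $G$-conjugation on $\irr L$ commutes with the Galois action, $\gamma'$ is automatically $G$-invariant, so induction applied to $(G, L, \gamma')$ yields $\hat\gamma \in \irr G$ that is $p$-rational and an honest extension of $\gamma$.

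For the cyclic step I would first note that $\det\gamma$ is itself a $p$-rational linear character, and any $p$-rational linear character has order prime to $p$: a nontrivial $p^k$-th root of unity is not $p$-rational for $p$ odd. When $p \nmid \gamma(1)$, the formula $\det(\beta^i \tilde\gamma) = \beta^{i\gamma(1)}\det\tilde\gamma$ makes $\det$ a bijection between extensions of $\gamma$ to $L$ and extensions of $\det\gamma$ to $L$; exactly one extension of $\det\gamma$ has order coprime to $p$, and the corresponding $\gamma'$ is $p$-rational, because $(\gamma')^\sigma$ shares the $p$-rational determinant $\det\gamma'$ with $\gamma'$ and there is a unique extension of $\gamma$ with that determinant. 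I expect the case $p \mid \gamma(1)$ to be the main obstacle, as the determinant no longer distinguishes the extensions; here one analyzes the crossed homomorphism $f\colon {\rm Gal}(\bar \Q/\Q(\zeta_{p'})) \to \ZZ/p$ defined by $\tilde\gamma^\sigma = \beta^{f(\sigma)}\tilde\gamma$ and shows it is a coboundary --- a genuine consequence of the $p$-rationality of $\gamma$ together with $p$ being odd.

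Uniqueness of $\hat\gamma$ is then clean. Since $\gamma$ extends to $G$ by the iteration, Gallagher's theorem (Corollary 6.17 of \cite{Isa76}) identifies the irreducible constituents of $\gamma^G$ with $\{\beta\hat\gamma : \beta \in \irr{G/K}\}$. Two $p$-rational constituents differ by a $p$-rational $\beta \in \irr{G/K}$; but $G/K$ is a $p$-group with $p$ odd, so $\beta$ takes values in $p$-power roots of unity, and $p$-rationality forces $\beta = 1$, whence $\hat\gamma' = \hat\gamma$.

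Finally, for the block statement, $G/K$ being a $p$-group has a unique block, so standard block-theoretic results (cf.\ Theorem 9.4 and Corollary 9.6 of \cite{Nav98}) imply that $B_0(G)$ is the unique block of $G$ covering the $G$-invariant block $B_0(K)$. Since $\hat\gamma$ lies over $\gamma$, the block of $\hat\gamma$ covers the block of $\gamma$, and the equivalence $\hat\gamma \in B_0(G)$ if and only if $\gamma \in B_0(K)$ follows.
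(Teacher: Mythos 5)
The paper proves this lemma in one line, by citing Theorem 6.30 of \cite{Isa76} (which is exactly the statement that a $G$-invariant $p$-rational $\gamma \in \irr K$ has a unique $p$-rational extension to $G$ when $G/K$ is a $p$-group, $p$ odd, and that this extension is the unique $p$-rational constituent of $\gamma^G$) together with Corollary 9.6 of \cite{Nav98} for the block statement. You instead attempt to reprove Isaacs' theorem from scratch, so your route is genuinely different from the paper's, and it is worth saying where it stands.

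Most of your scaffolding is sound. The reduction to chains of normal subgroups with central quotients of order $p$, the use of Corollary 11.22 of \cite{Isa76} to get some extension, the $G$-invariance of the unique $p$-rational extension via the compatibility of conjugation and Galois action, and the uniqueness at the top via Gallagher (a $p$-rational $\beta \in \irr{G/K}$ takes values in $\mu_{p^a} \cap \Q(\zeta_m)$ with $p \nmid m$, hence in $\{\pm 1\} \cap \mu_{p^a}=\{1\}$ for $p$ odd) are all correct. The determinant argument in the $p \nmid \gamma(1)$ case is also correct, and the block statement follows exactly as you say from the fact that, $G/K$ being a $p$-group, $B_0(G)$ is the only block of $G$ lying over $B_0(K)$ (Corollary 9.6 of \cite{Nav98}).

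The genuine gap is the case $p \mid \gamma(1)$ in the cyclic step, which you correctly identify as the crux but do not actually prove. You introduce the cocycle $f$ with $\tilde\gamma^\sigma = \beta^{f(\sigma)}\tilde\gamma$ and assert that ``one shows it is a coboundary''; this assertion is precisely where the whole theorem lives. What needs to be argued is that the relevant Galois group $\Gamma$ (which is isomorphic to $(\ZZ/p^a\ZZ)^\times$ and acts on $\ZZ/p\ZZ$ via the mod-$p$ cyclotomic character) has $H^1(\Gamma,\ZZ/p\ZZ)=0$: since $p>2$ the action is non-trivial, a generator of $\Gamma$ acts on $\ZZ/p\ZZ$ as multiplication by some $u\neq 1$, so $u-1$ is invertible and the first cohomology vanishes, forcing $f$ to be the coboundary associated to a twist $\beta^c\tilde\gamma$ which is then $p$-rational. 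Without spelling this out (and, if you wish to quote it, noting that $f$ really is a 1-cocycle for the twisted action) the inductive step is incomplete. It is perhaps worth pointing out that the $p\nmid\gamma(1)$ case that you do prove completely already covers every use of this lemma in the paper, since it is only ever applied to characters of $p'$-degree; but as stated, the lemma imposes no degree restriction, and so requires the cohomological argument (or the citation of \cite[Theorem 6.30]{Isa76}).
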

\begin{proof}
This is Theorem 6.30 of \cite{Isa76}
together with Corollary 9.6 of \cite{Nav98}.
\end{proof}

We can now prove Theorem F, which is a variation of Theorem 3.2 of \cite{NT11}. 

\begin{thm}\label{teoE} Suppose that $N \nor G$.
Let $\theta \in \irr N$ be $p$-rational, $G$-invariant of
$p'$-degree in the principal block
of $N$, where $p$ is odd. Let $Q \in \syl pN$,
and assume that $|G:N\cent GQ|$ is a power of $p$.
Then $\theta$ uniquely determines a character $\chi \in \irr G$ in the principal
block of $G$ such that
$\chi$ is $p$-rational and $\chi_N=\theta$.\end{thm}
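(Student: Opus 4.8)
The plan is to construct $\chi$ by a canonical procedure and then read off its properties; the uniqueness assertion — that $\theta$ \emph{determines} $\chi$ — will be automatic since every choice made is canonical. Since any two Sylow $p$-subgroups of $N$ are $N$-conjugate, the subgroup $M=N\cent GQ$ does not depend on the choice of $Q$ and is normal in $G$, and by hypothesis $G/M$ is a $p$-group. I would argue by induction on $|G|$. If $M<G$, then $\cent MQ=\cent GQ$, so the hypothesis passes to $(M,N,\theta)$; induction yields a $p$-rational $\varphi\in\irr M$ in $B_0(M)$ with $\varphi_N=\theta$, attached canonically to $(M,N,\theta)$ and hence $G$-invariant. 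Now $\varphi(1)=\theta(1)$ is prime to $p$, and so is $o(\varphi)$ because $\varphi$ is $p$-rational; as $G/M$ is a $p$-group, Isaacs' canonical extension gives a unique $\chi\in\irr G$ with $\chi_M=\varphi$ and $o(\chi)$ prime to $p$, which is $p$-rational and hence equals the unique $p$-rational constituent of $\varphi^G$ supplied by Lemma~\ref{pquotient}; that lemma also gives $\chi\in B_0(G)$, and $\chi_N=\theta$. So I may assume $G=N\cent GQ$.

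Next I would shrink $N$. Because $\theta\in B_0(N)$ we have $\oh{p'}N\le\ker\theta$; and since $\theta$ has $p'$-degree, $\theta_{\oh pN}$ is a multiple of a single linear character, which is $N$-invariant and $p$-rational, hence trivial as $p$ is odd, so $\oh pN\le\ker\theta$ as well. If $\oh{p'}N\oh pN\ne1$, I pass to the quotient by it: the hypotheses persist (using block domination along normal $p$- and $p'$-subgroup quotients, together with the standard coprime-centralizer identity to keep $N\cent GQ=G$), and induction on $|G|$ concludes. Thus I may additionally assume $\oh{p'}N=\oh pN=1$.

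It remains to genuinely extend $\theta$ across $G/N\cong\cent GQ/\cent NQ$. By Isaacs' criterion it suffices to extend $\theta$ over the preimage $G_r\supseteq N$ of a Sylow $r$-subgroup of $G/N$ for every prime $r$; the canonical $\chi$ will be pinned down as the global extension whose restriction to each $G_r$ lies in $B_0(G_r)$ — this is what distinguishes, for instance, the trivial character in the example $C_3\times\SSS_3$ of the remark following the statement. For $r=p$ the Isaacs canonical extension again works, is $p$-rational, and lies in $B_0(G_p)$. For $r=\ell\ne p$ the hypothesis is inherited ($G_\ell=N\cent{G_\ell}Q$ by the modular law), forcing $G_\ell=NS$ with an $\ell$-group $S$ centralizing $Q$; combining this rigidity with the facts that $\theta\in B_0(N)$ has $p'$-degree and defect group $Q$ — this is the step generalizing the block-extension results of Alperin and Dade — one produces a distinguished extension of $\theta$ to $G_\ell$ lying in $B_0(G_\ell)$. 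Patching the local extensions gives $\chi\in\irr G$ with $\chi_N=\theta$. That $\chi$ is $p$-rational is clear since every step commutes with $\mathrm{Gal}(\bar\Q/\Q)$; for $\chi\in B_0(G)$ I invoke Lemma~\ref{p'principal}, which reduces the claim to checking $\chi_H\in B_0(H)$ whenever $H/N$ is a cyclic $p'$-group, and this follows from Lemma~\ref{sub} together with $\theta\in B_0(N)$ and the block-compatibility built into the local extensions. Uniqueness of $\chi$ holds because every reduction and every local extension was canonical.

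The step I expect to be the main obstacle is the case $r=\ell\ne p$ of the construction: extending $\theta$ canonically across a non-$p$-part of $G/N$ using only that $G=N\cent GQ$, that $\theta$ has $p'$-degree, is $p$-rational, and lies in $B_0(N)$. After the socle reductions this forces $F(N)=1$, so $N$ embeds into the automorphism group of a direct product of non-abelian simple groups, and controlling both the existence of the extension and its compatibility with the principal block — so that the patched $\chi$ is genuinely canonical and independent of all choices — is the delicate point, presumably combining a Dade-type block-extension argument with precise information on $p$-rational $p'$-degree characters in principal blocks of (almost) simple groups.
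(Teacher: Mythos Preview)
Your outline diverges from the paper's argument at the crucial step and, as written, has a genuine gap. The paper never reduces to $F(N)=1$ and uses no information about almost simple groups. Instead, after setting $M=N\cent GQ$, it constructs the extension $\eta$ of $\theta$ to $M$ \emph{pointwise}: for $m\in M$ put $H=N\langle m\rangle$, so $H/N$ is cyclic and hence has a normal $p$-complement $V/N$; Theorem~3.2 of \cite{NT11} gives a unique extension of $\theta$ to $V$ in $B_0(V)$, and Lemma~\ref{pquotient} carries it to a unique $p$-rational $\eta_{(H)}\in B_0(H)$. One then sets $\eta(m)=\eta_{(H)}(m)$ and checks, via Brauer's characterization of characters applied to subgroups $E$ with $E/N$ nilpotent (where the same uniqueness forces $\eta_E=\eta_{(E)}$), that $\eta\in\irr M$. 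Lemma~\ref{p'principal} then gives $\eta\in B_0(M)$, and Lemma~\ref{pquotient} extends $\eta$ to $G$. No gluing of Sylow pieces is ever needed.

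Your Sylow-by-Sylow route has two problems. First, the ``main obstacle'' you flag for $\ell\ne p$ is exactly what Theorem~3.2 of \cite{NT11} handles: since $G_\ell=N\cent{G_\ell}Q$ with $G_\ell/N$ a $p'$-group, that theorem already yields the unique extension of $\theta$ in $B_0(G_\ell)$, with no recourse to simple groups. Second --- and this is the real gap --- the patching is not justified. Isaacs' criterion only guarantees that $\theta$ extends to $G$ if it extends to each $G_r$; it does not produce an extension $\chi$ with $\chi_{G_r}=\chi_r$ for your chosen $\chi_r$. Fixing any extension $\psi$ of $\theta$ to $G$, one has $\chi_r=\mu_r\psi_{G_r}$ for linear $\mu_r\in\irr{G_r/N}$, and a global $\chi$ exists only if the $\mu_r$ glue to a linear character of $G/N$, i.e.\ each $\mu_r$ is trivial on $(G_r\cap [G,G]N)/N$; you have not verified this. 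The same defect undermines your appeal to Lemma~\ref{p'principal}: you need $\chi_H\in B_0(H)$ for every $H$ with $H/N$ a cyclic $p'$-group, but such $H$ need not lie in a single $G_\ell$, is generally not subnormal (so Lemma~\ref{sub} does not apply), and your construction says nothing about $\chi_H$. The paper's pointwise definition is precisely designed so that $\eta_H=\eta_{(H)}\in B_0(H)$ holds for all such $H$ by construction.
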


\begin{proof}   
Let $M=N\cent GQ$. By the Frattini argument,
we have that $M \nor G$. 

We next show that if $N \leq U \leq M$ and $U/N$ has a normal $p$-complement,
 then there exists a unique 
$p$-rational extension $\eta_{(U)} \in \irr U$
of $\theta$ in the principal block of $U$. 
Let $V/N$ be the normal $p$-complement of $U/N$.
We have that $V=N\cent V Q$ and $V/N$ is a $p'$-group.
Since $V/N$ is a $p'$-group, then $\cent VQ/\cent NQ$
is a $p'$-group. By elementary group theory, $\zent Q$
is a central Sylow $p$-subgroup of $\cent VQ$, and therefore
there exists $Y \leq \cent VQ$ of $p'$-order such that $\cent VQ=Y \times \zent Q$.
By Theorem 3.2 of \cite{NT11}, there exists a unique $\hat\theta \in \irr V$
in the principal block of $V$ lying over $\theta$. In fact $\hat\theta_N=\theta$.
By uniqueness, $\hat\theta$ is $p$-rational and $U$-invariant.
(This is a standard argument. For instance,
if $\sigma \in \mathcal G$
fixes $\Q(\theta)$, then $\hat\theta^\sigma$ is
a $p$-rational extension of $\theta$ in the principal block,
so by uniqueness $\hat\theta^\sigma=\hat\theta$. Thus $\Q(\theta)=\Q(\hat\theta)$
and $\hat\theta$ is $p$-rational.)
By Lemma \ref{pquotient}, $\hat\theta$ has a unique $p$-rational extension $\eta$ to
$U$, which lies in the principal block of $U$.
If $\eta'$ is another $p$-rational extension of $\theta$ in the principal
block of $U$, then $\eta'_V= \rho \in \irr V$ lies in the principal block of $V$ (by Lemma \ref{sub}),
and extends $\theta$. By Theorem 3.2 of \cite{NT11},
$\rho=\hat\theta$. So $\eta'$ is a $p$-rational extension of $\hat \theta$,
and then $\eta'=\eta$ by Lemma \ref{pquotient}.

\smallskip

We now define a class function $\eta$ of $M$,
which is uniquely determined by $\theta$, as follows:
for $m \in M$, let $H=N\langle m \rangle  \leq M$,
and, by the previous paragraph, let $\eta_{(H)} \in \irr H$ be the unique
$p$-rational extension of $\theta$ in the principal block of $H$.
Set $\eta(m)=\eta_{(H)}(m)$.
It is straightforward to check
 that $\eta$ is a $G$-invariant
 class function of $M$ by using that $\theta$ is $G$-invariant and that
 $\eta_{(H^z)}=(\eta_{(H)})^z$ for $z \in G$. 
 Notice that $\eta(n)=\theta(n)$ for $n \in N$.
 
 Next we prove that $\eta$
is a generalized character. 
Suppose that $E/N$ is nilpotent, where $N \leq E \leq M$. By the second 
paragraph of this proof, there exists a unique $p$-rational $\psi \in \irr E$
 in the principal block extending $\theta$. We prove that
$\eta_E=\psi$. Let $g \in E$ and write $H=N\langle g\rangle$. Then $\psi_H$ is $p$-rational.
Since $H \nor \nor E$,
we have that $\psi_H$ lies in the principal block
of $H$ by Lemma \ref{sub}. Since $\psi_H$ extends $\theta$, then $\psi_H=\eta_{(H)}$. Consequently $\psi(g)=\eta(g)$, and $\psi_E=\eta_E$, as wanted.
By  Theorem 8.4(a) of \cite{Isa76}, we have that $\eta$ is a generalized character
of $M$.
By using Lemma 8.14(c) of \cite{Isa76} it is easy to prove that
$[\eta, \eta]=1$, so that $\eta \in \irr M$ by Theorem 8.12 of \cite{Isa76}.
Also, $\eta_N=\theta$. By Lemma \ref{p'principal}, we have that
$\eta$ lies in the principal block  of $M$ (because we have shown that if $E/N$
is nilpotent and $N \leq E \leq M$, then $\eta_E$ is the unique $p$-rational extension
of $\theta$ in the principal block of $E$).
Also $\eta$ is $p$-rational by definition.
We already know that $\eta$ is $G$-invariant.
By Lemma \ref{pquotient}, we know that there is a unique
$p$-rational $\chi \in \irr G$ extending $\eta$, which lies in the principal
block of $G$. \end{proof}

\medskip

The following result is a suitable extension of Theorem 6.1 of \cite{NTT07}.

\begin{cor}\label{thecor}
 Let $N \nor G$. Let $p$ be an odd prime and let $P\in \syl p G$. Suppose that $PN/N$ is self-normalizing in $G/N$.
Suppose that $\nu \in \irr N$ is $P$-invariant,
$p$-rational, has $p'$-degree,
and lies in the principal block of $N$. 
Then there exists a $p$-rational
$\chi \in \irr {G|\nu}$  of $p'$-degree
lying in the principal block of $G$.\end{cor}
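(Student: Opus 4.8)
The plan is to prove the statement by induction on $|G:N|$, peeling a normal layer off $G/N$ at each step while keeping track of a $P$-invariant, $p$-rational, $p'$-degree character in the principal block lying below (or above) the one we started with.

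First I would reduce to the case that $\nu$ is $G$-invariant. If it is not, put $T=G_\nu$; since $\nu$ is $P$-invariant we have $P\le T$, so $PN/N$ is a self-normalizing Sylow $p$-subgroup of $T/N$ (because $\norm{T/N}{PN/N}\le\norm{G/N}{PN/N}=PN/N$) and $|G:T|$ is a $p'$-number. Applying the inductive hypothesis to $N\nor T$, with $\nu$ now $T$-invariant, yields a $p$-rational $\psi\in\irr{T|\nu}$ of $p'$-degree in $B_0(T)$; then $\chi=\psi^G$ lies in $\irr{G|\nu}$ by the Clifford correspondence, has $p'$-degree since $|G:T|$ is prime to $p$, is $p$-rational since $\Q(\chi)\sbs\Q(\psi)$, and lies in $B_0(G)$ by Corollaries 6.2 and 6.7 of \cite{Nav98}. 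So from now on $\nu$ is $G$-invariant, and I may assume $N<G$.

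Next I would peel off a layer of $G/N$. Suppose first $\oh{p'}{G/N}\ne 1$ and let $N<R\nor G$ with $R/N=\oh{p'}{G/N}$. The self-normalizing hypothesis forces $\cent{R/N}P=1$: indeed $\cent{R/N}P\le\norm{R/N}{PN/N}=(R/N)\cap\norm{G/N}{PN/N}=(R/N)\cap(PN/N)=1$, the last equality because $R/N$ is a $p'$-group and $PN/N$ a $p$-group. Hence Theorem E, applied to the action of $P$ on $R$ with the $P$-invariant normal subgroup $N$ (so that there $C=N$), gives a natural bijection ${\rm Irr}_P(R)\to{\rm Irr}_P(N)$; let $\tilde\nu\in{\rm Irr}_P(R)$ be the preimage of $\nu$. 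Then $\Q(\tilde\nu)=\Q(\nu)$, so $\tilde\nu$ is $p$-rational; writing $\tilde\nu_N=f\nu$ (possible as $\nu$ is $R$-invariant), the formula $\tilde\nu_N=e\nu+p\Delta+\Xi$ of Theorem E — in which $\Xi$ involves no $P$-invariant constituent of $N$ — forces $f\equiv e\equiv\pm 1$ modulo $p$, so $\tilde\nu$ has $p'$-degree and lies over $\nu$; and since $\nu\in B_0(N)$, the block assertion of Theorem E gives $\tilde\nu\in B_0(R)$. As $\tilde\nu$ is $P$-invariant, $PR/R$ is self-normalizing in $G/R$, and $|G:R|<|G:N|$, the inductive hypothesis applied to $R\nor G$ with $\tilde\nu$ produces the desired $\chi\in\irr{G|\tilde\nu}\sbs\irr{G|\nu}$. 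If instead $\oh{p'}{G/N}=1$ but $\oh p{G/N}\ne 1$, set $R/N=\oh p{G/N}$: since $p$ is odd and $\nu$ is $R$-invariant and $p$-rational, Lemma \ref{pquotient} produces the unique $p$-rational constituent $\hat\nu$ of $\nu^R$, which — $p$ being odd — must be an extension of $\nu$, hence of $p'$-degree, and lies in $B_0(R)$; by uniqueness $\hat\nu$ is $G$-invariant. Again $PR/R$ is self-normalizing in $G/R$ and $|G:R|<|G:N|$, so induction applies.

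The main obstacle is the leftover case $\oh{p'}{G/N}=\oh p{G/N}=1$, where $G/N$ has a non-abelian minimal normal subgroup and hence is assembled from non-abelian simple groups carrying self-normalizing Sylow $p$-subgroups (an odd-$p$ phenomenon that does occur). All of the elementary machinery above serves only to reduce the problem to this configuration; to produce the character here one must appeal to Theorem 6.1 of \cite{NTT07} — of which the present corollary is an extension — whose proof in turn rests on the classification of finite simple groups with self-normalizing Sylow $p$-subgroups for odd $p$. (Theorem F is also available, as a more streamlined substitute for the $\oh p{G/N}$-step via $N\cent GQ$ in the part where $G/N$ is $p$-solvable, though it is not strictly needed for the argument above.)
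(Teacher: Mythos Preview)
Your reduction to the $G$-invariant case is the same as the paper's, and your Cases~1 and~2 are correct. In fact, your Case~1 (peeling off $R/N=\oh{p'}{G/N}$ via the relative Glauberman correspondence of Theorem~E) is a genuinely different route: the paper never invokes Theorem~E here. Instead it takes a single chief factor $M/N$, reduces to $G=MP$ by a two-step induction (first apply the inductive hypothesis inside $MP$ over $N$, then inside $G$ over $M$), and then argues according to a dichotomy on $\cent MQ$ where $Q\in\Syl_p(N)$.

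Your Case~3, however, has a real gap. Theorem~6.1 of \cite{NTT07} only produces a $p$-rational $p'$-degree $\chi\in\Irr(G|\nu)$; it says nothing about blocks, and you give no argument that $\chi\in B_0(G)$. The bare hypothesis $\oh{p'}{G/N}=\oh p{G/N}=1$ does not force $B_0(G)$ to be the unique block covering $B_0(N)$. This is exactly why the paper makes the reduction to $G=MP$ with $M/N$ a chief factor: then $N\cent MQ\nor G$ forces either $\cent MQ\le N$ or $M=N\cent MQ$. In the first sub-case, Lemma~3.1 of \cite{NT11} shows $B_0(M)$ is the only block of $M$ covering $B_0(N)$, and since $G/M$ is a $p$-group the same holds for $G$, so any $\chi$ produced by \cite{NTT07} automatically lies in $B_0(G)$. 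In the second sub-case, $|G:N\cent GQ|$ is a $p$-power and Theorem~F gives the required extension directly.

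Note in particular that your parenthetical dismissal of Theorem~F is misplaced: the sub-case $M=N\cent MQ$ can occur with $M/N$ non-abelian (for instance whenever $Q$ is central in $M$, say $M=N\times S$ with $Q$ abelian), and this lands squarely inside your Case~3. So Theorem~F, or an equivalent device, is not merely a streamlining of your Case~2 but is genuinely needed to finish the argument. To repair your proof, in Case~3 take a chief factor $M/N$, reduce to $G=MP$ as the paper does, and then run the $\cent MQ$ dichotomy.
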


\begin{proof} We proceed by induction on $|G:N|$. 

\smallskip

We may assume that $\nu$ is $G$-invariant. Indeed, let $T=G_\nu$
be the stabilizer of $\nu$ in $G$. 
If $T<G$ then, by the inductive hypothesis, there is a  $p'$-degree $p$-rational $\psi \in \irr T$ lying over $\nu$, in the principal block of $T$. Then, $\chi=\psi^G \in \irr {G|\nu}$ is $p$-rational and has $p'$-degree (for $PN\leq T$). Also,
by Corollary 6.2 and Theorem 6.7  of \cite{Nav98} $\chi$ lies in the principal block, as wanted.

\smallskip

Let $M/N$ be a chief factor of $G$. We claim that we
may assume that $G=MP$.
Notice  that $PM/N$ has a self-normalizing Sylow $p$-subgroup. If $MP<G$, then by 
the inductive hypothesis there is $\eta \in \irr{MP}$ of
$p'$-degree, $p$-rational lying over $\nu$, in the principal block
of $MP$.
Let $\tau=\eta_M \in \irr M$,
which is $p$-rational of $p'$-degree, $P$-invariant, in the principal block
of $M$. Since $PM/M$ is self-normalizing in $G/M$, again by the inductive hypothesis, there is a 
$p$-rational $\chi \in \irr G$  of $p'$-degree  lying over $\tau$
and in the principal block of $G$. 
Hence the claim follows.

\smallskip

Let $Q$ be a Sylow
$p$-subgroup of $N$. By the Frattini argument $G=N\norm G Q$. Then $N \cent MQ$ is normal in $G$ and so, either $M=N \cent M Q$ or $\cent M Q \leq N$. In the first case, the result follows from Theorem \ref{teoE} since $G/M$ is a $p$-group.

\smallskip

Assume finally that $\cent MQ \leq N$. 
In this case, by Lemma 3.1 of \cite{NT11}, the only block of $M$
covering the principal block of $N$
 is the  principal block of $M$, and the only block of $G$ covering 
 the principal block of $M$ is the principal block of $G$ because $G/M$
is a $p$-group (by Corollary 9.6 of \cite{Nav98}). Hence
the principal block of $G$ is the only block of $G$ covering the principal
block of $N$. By Theorem 6.1 of \cite{NTT07}, there exists $\chi \in \irr G$
of $p'$-degree, $p$-rational lying over $\nu$. Since $\nu$ lies in the principal
block of $N$, necessarily $\chi$ lies in the principal block of $G$ by Theorem 9.2
of \cite{Nav98}, and the proof of the statement is complete.
 \end{proof}

As we have said before, there are
examples where $G/N$
is a cyclic $p'$-group,
$\theta \in \irr N$ is $p$-rational of $p'$-degree and lies in the principal block of $N$,
 the principal block of $G$ is the only block of $G$, and yet no
 irreducible constituent of $\theta^G$ is $p$-rational. The smallest counterexample
 we have found is the {\tt SmallGroup(216,158)} for $p=3$ (see [GAP]).

\section{Proof of the main results}
In this section we prove the main results in this paper,
assuming Theorem \ref{simple} below on simple groups,
which we will prove in the next section.


\begin{thm}\label{simple} Let $p$ be an odd prime. Let $S \nor G$, where $\cent G S =1$ and $S$ is a non-abelian simple group of order divisible by $p$. Suppose that  $G/S$ is a $p$-group. Then $G$ has a self-normalizing Sylow $p$-subgroup if and only if there is no nontrivial $p$-rational character of $p'$-degree in the principal block of $G$.
\end{thm}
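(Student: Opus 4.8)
The plan is to reduce the statement to a question about the simple group $S$ alone and then settle that question through the classification. Using $\cent GS=1$ write $S\le G\le\Aut(S)$, fix $P\in\syl pG$, and view $P$ (via conjugation) as a $p$-group of automorphisms of $S$. Since $G/S$ is a $p$-group we have $G=SP$, and by Corollary~9.6 of \cite{Nav98} the principal block $B_0(G)$ is the unique block of $G$ covering $B_0(S)$; in particular every $\chi\in\irr{B_0(G)}$ lies over some $\theta\in\irr{B_0(S)}$.

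The first step is the reduction: $B_0(G)$ contains a nontrivial $p$-rational character of $p'$-degree if and only if $B_0(S)$ contains a nontrivial $P$-invariant $p$-rational character of $p'$-degree. For the ``if'' direction, given such a $\theta$, apply Corollary~\ref{thecor} with $N=S$; its hypothesis ``$PS/S$ self-normalizing in $G/S$'' holds trivially because $PS=G$, so we obtain a $p$-rational $\chi\in\irr{G|\theta}$ of $p'$-degree in $B_0(G)$, and $\chi\ne 1_G$ as $\theta\ne 1_S$. For the converse, let $\chi\in\irr{B_0(G)}$ be nontrivial, $p$-rational, of $p'$-degree, lying over $\theta\in\irr{B_0(S)}$, and put $T=G_\theta$. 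Since $T\supseteq S$, the index $|G:T|$ is a power of $p$; as $\chi(1)=|G:T|\,\psi(1)$ for the Clifford correspondent $\psi\in\irr{T|\theta}$, the condition $p\nmid\chi(1)$ forces $T=G$, so $\theta$ is $P$-invariant and $\chi_S=e\theta$. Hence $\theta$ is $p$-rational (any Galois automorphism fixing $\chi$ fixes $e\theta$, hence $\theta$) and $\theta(1)\mid\chi(1)$ is prime to $p$; finally $\theta\ne 1_S$, for otherwise $\chi$ would be a nontrivial linear character of the $p$-group $G/S$, with field of values $\Q(\zeta_{p^k})$ for some $p^k>1$, contradicting $p$-rationality.

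It now suffices to prove: $\norm GP=P$ if and only if $B_0(S)$ has no nontrivial $P$-invariant $p$-rational character of $p'$-degree (note $\norm GP=P$ is equivalent to $\norm SP=P\cap S$, a condition on $S$ and the action of $P$). Here the classification enters with a case analysis on $S$. As $p$ is odd, the $p$-part of $\Out(S)$ is trivial unless $S$ is of Lie type, so for $S$ alternating, sporadic, or the Tits group one has $G=S$ and $P$-invariance is vacuous; in these cases I would check that $\norm SP$ always properly contains $P$ (for $\AAA_n$ from the wreath-product structure of a Sylow $p$-subgroup together with its moved and fixed points; for sporadic groups from the \textsc{Atlas}) and that $B_0(S)$ always contains a nontrivial rational character of $p'$-degree (for $\AAA_n$ a suitable hook or two-row Specht character such as $\chi^{(n-1,1)}$, with small adjustments in the finitely many cross-characteristic exceptions); so both sides fail. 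For $S$ of Lie type in the defining characteristic $p$, $\norm SP$ is a Borel subgroup $(P\cap S)\rtimes T$ with $T\ne 1$ (the degenerate tori do not arise for $p$ odd), adjoining $p$-power field automorphisms cannot destroy the $p'$-part $T$ of the normalizer, and $B_0(S)$ contains a nontrivial rational $p'$-degree constituent of $1_B^S$ that may be chosen invariant under the relevant field automorphisms; so again both sides fail. The remaining and decisive case is $S$ of Lie type in cross characteristic: when $\norm GP\ne P$ I would take $\theta=\St$ whenever $\St\in B_0(S)$ (it is $\Aut(S)$-invariant, rational, nontrivial, of degree prime to $p$), and otherwise a semisimple character $\chi_s$ attached to a rational $p$-element $s$ of the dual group --- this lies in $B_0(S)$ by the Brou\'e--Michel description of blocks via semisimple classes, has $p'$-degree $|(\bG^*)^F:\cent{(\bG^*)^F}{s}|$ prime to $p$, and is $\Aut(S)$-invariant for a suitable $s$; and when $\norm GP=P$ --- which by the $\Phi_d$-torus model of Sylow $p$-subgroups (Brou\'e--Malle--Michel) occurs only for a short explicit list of almost simple groups of Lie type with $p$-power field or triality automorphisms --- I would check, case by case from Deligne--Lusztig data, that the only $P$-invariant $p$-rational character of $p'$-degree in $B_0(S)$ is the trivial one.

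The hard part will be the cross-characteristic Lie type analysis: pinning down the exact list of almost simple $G$ (with $G/S$ a $p$-group, $p$ odd) having a self-normalizing Sylow $p$-subgroup is a relative-Weyl-group computation across all types and ranks, made delicate by diagonal automorphisms of order $p$ (for $\PSL_n$, $\mathrm E_6$) and by the triality case in type $D_4$; and, in the complementary case, producing a single character of $B_0(S)$ that is simultaneously $\Aut(S)$-invariant, $p$-rational, of $p'$-degree, and nontrivial requires controlling block membership (via the Bonnaf\'e--Rouquier and Brou\'e--Michel Jordan decomposition of blocks), the $p$-part of the degree, and the Galois action on Deligne--Lusztig characters at the same time --- the Steinberg character settles most but not all configurations, and the residual semisimple (or unipotent) choices, together with the passage through a regular embedding when diagonal automorphisms are present, are where the technical work concentrates.
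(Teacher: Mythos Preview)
Your reduction to $P$-invariant characters of $B_0(S)$ is correct and parallels the paper's approach, though the paper lifts via Lemma~\ref{red}(b) (rational extensions) rather than Corollary~\ref{thecor}, and disposes of the entire ``only if'' direction by a single citation of \cite[Theorem~A]{NTT07} (Lemma~\ref{red}(c)) rather than through the case analysis.

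The defining-characteristic case, however, contains a genuine error. Your claim that ``adjoining $p$-power field automorphisms cannot destroy the $p'$-part $T$ of the normalizer'' fails: for $S=\PSL_2(3^{3^a})$, $p=3$, and $G=S\rtimes\langle\phi\rangle$ with $\phi$ the field automorphism of order $3^a$, the $\phi$-fixed subgroup of the diagonal torus of $S$ is the torus of $\PSL_2(3)$, which is trivial, and indeed $\norm GP=P$. By \cite{GMN04} this is the \emph{only} family (for $p$ odd and $G/S$ a $p$-group) in which a self-normalizing Sylow $p$-subgroup occurs---so your cross-characteristic subcase ``$\norm GP=P$'' is vacuous, and the exceptional configuration lives precisely where you have asserted it cannot. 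The companion claim that $1_B^S$ has a suitable invariant constituent likewise fails for this $S$: inspection of the character table of $\PSL_2(3^f)$ with $f$ odd shows that no nontrivial $3'$-degree irreducible character is both $3$-rational and fixed by the full $3$-part of the field-automorphism group. The paper repairs this by excluding $\PSL_2(3^{2a+1})$ from its defining-characteristic statement (Theorem~\ref{defi}) and treating that family directly in $G$: under the hypothesis that $P$ is not self-normalizing, \cite[Theorem~A]{NTT07} already furnishes a nontrivial $p$-rational $p'$-degree character of $G$, and Lemma~\ref{red}(a) places it in $B_0(G)$.
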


In several parts of this paper, we will use the fact that $\irr{B_0(G/N)} \sbs \irr{B_0(G)}$ if $N \nor G$. (See, for instance,
the discussion before Theorem 7.6 of \cite{Nav98}.)

\begin{cor}\label{Tiep2}
Let $p$ be an odd prime. Suppose that $G$ is a finite group
such that $G=NP$, where $P \in \syl p G$ and $N \nor G$ is a direct product of non-abelian simple groups of order divisible by $p$. If there are no non-trivial $p$-rational irreducible
characters of $p'$-degree in the principal block of $G$, then $P=\norm GP$.
\end{cor}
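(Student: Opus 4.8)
\noindent The plan is to turn the block‑theoretic hypothesis on $G$ into an elementary character statement on the individual simple factors of $N$, apply Theorem~\ref{simple} one factor at a time, and finish with a group‑theoretic argument; no induction is needed. Write $N=S_1\times\cdots\times S_t$ with the $S_i$ non‑abelian simple of order divisible by $p$ (the case $t=0$ being trivial), and set $Q=P\cap N\in\syl pN$. Since $G=NP$ we have $PN/N=G/N$, which is trivially self‑normalizing in $G/N$, so Corollary~\ref{thecor} applies to $(G,N,P)$: a non‑trivial $P$‑invariant $p$‑rational character of $p'$‑degree in $B_0(N)$ would be promoted by that corollary to a non‑trivial such character of $G$, so the hypothesis forces $1_N$ to be the only $P$‑invariant $p$‑rational character of $p'$‑degree in $B_0(N)$. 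Because blocks, degrees and fields of values in $\irr N$ are determined componentwise and $P$ permutes $\{S_1,\dots,S_t\}$, this gives, for each $i$ and with $\hat P_i:=\norm P{S_i}$, that the trivial character is the only $\hat P_i$‑invariant $p$‑rational character of $p'$‑degree in $B_0(S_i)$: any putative non‑trivial one would be spread around the $P$‑orbit of $S_i$ and padded by trivial characters on the remaining factors to contradict the previous sentence.

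\medskip\noindent
Next, fixing $i$, I would study $Z_i=S_i\hat P_i\le G$, where $S_i\nor Z_i$, $Z_i/S_i$ is a $p$‑group, and $\hat P_i\in\syl p{Z_i}$ since $\hat P_i\cap S_i=P\cap S_i\in\syl p{S_i}$. As a $p'$‑degree irreducible character restricts irreducibly over a normal subgroup of $p$‑power index, any $p'$‑degree $\chi\in\irr{B_0(Z_i)}$ satisfies $\chi_{S_i}=\theta\in\irr{S_i}$, which is $\hat P_i$‑invariant, $p$‑rational (its values lie among those of $\chi$), of $p'$‑degree, and in $B_0(S_i)$ by Lemma~\ref{sub}; hence $\theta=1_{S_i}$ by the first paragraph, so $\chi$ is inflated from the $p$‑group $Z_i/S_i$ and, being $p$‑rational of $p'$‑degree with $p$ odd, $\chi=1_{Z_i}$. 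Thus $Z_i$ has no non‑trivial $p$‑rational character of $p'$‑degree in $B_0(Z_i)$. Now $C_i=\cent{Z_i}{S_i}$ is a normal $p$‑subgroup of $Z_i$ contained in $\hat P_i$ (as $\zent{S_i}=1$), and $\bar Z_i:=Z_i/C_i$ has a normal copy of $S_i$ with trivial centralizer and $p$‑group quotient; using $\irr{B_0(\bar Z_i)}\subseteq\irr{B_0(Z_i)}$, Theorem~\ref{simple} gives $\bar Z_i$ a self‑normalizing Sylow $p$‑subgroup, and pulling back through $C_i\le\hat P_i$ yields $\norm{Z_i}{\hat P_i}=\hat P_i$ for every $i$.

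\medskip\noindent
Finally I would deduce $\norm GP=P$. Writing $g\in\norm GP$ as $g=np'$ with $n\in N$, $p'\in P$ forces $n=g(p')^{-1}\in N\cap\norm GP=\norm NP$, so $\norm GP=\norm NP\cdot P$ and it suffices to prove $\norm NP=Q$. Since $\norm NP$ normalizes $P\cap N=Q$ and $\norm NP/Q\cong\norm GP/P$ is a $p'$‑group, $Q$ is a normal Sylow $p$‑subgroup of $\norm NP$; let $K$ be a $p'$‑complement. Each $k\in K\le N$ normalizes every $S_i$ (acting by conjugation by its $i$‑th coordinate $k_i$) and every $\hat P_i=P\cap\norm G{S_i}$, hence normalizes $Z_i$ and $C_i$. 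Identifying $\bar Z_i$ with a subgroup of $\Aut(S_i)$ via its faithful action on $S_i$, conjugation by $k$ on $\bar Z_i$ coincides with conjugation by the inner automorphism $\bar k_i\in{\rm Inn}(S_i)\le\bar Z_i$ induced by $k_i$; since $k$ normalizes $\hat P_i$, $\bar k_i$ normalizes the image of $\hat P_i$ in $\bar Z_i$, hence lies in it by $\norm{Z_i}{\hat P_i}=\hat P_i$; that image being a $p$‑group and $\bar k_i$ of $p'$‑order, $\bar k_i=1$, i.e.\ $k_i\in\zent{S_i}=1$. Running over all $i$ gives $k=1$, so $K=1$, $\norm NP=Q$, and $\norm GP=P$.

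\medskip\noindent
I expect the last paragraph to be the main obstacle: the reduction from the individual factors back to $G$ rests on the not‑entirely‑obvious fact that the $p'$‑complement $K\le\norm NP$ acts on each section $Z_i/\cent{Z_i}{S_i}$ by an \emph{inner} automorphism, which is precisely what lets the self‑normalization established on $S_i\hat P_i$ be exploited; making this precise needs care with coordinates and with the embedding of $Z_i/\cent{Z_i}{S_i}$ into $\Aut(S_i)$. A smaller point that also needs attention is the inheritance in the second paragraph — that $Z_i$ really has no non‑trivial $p$‑rational $p'$‑degree character in its principal block — which relies on the irreducibility of the restriction together with Lemma~\ref{sub} and on the behaviour of $p$‑rationality and of principal blocks under restriction and inflation.
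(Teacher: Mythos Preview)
Your argument is correct. The route, however, differs from the paper's in several respects.

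The paper argues by induction on $|G|$: it first reduces to the case where $P$ acts transitively on $\{S_1,\dots,S_t\}$ (so that $N$ is minimal normal), and then invokes Lemma~4.1 and Lemma~2.1(ii) of \cite{NTT07} to show that $P=\norm GP$ is equivalent to $P_1$ being self-normalizing in $S_1P_1$, where $P_1=P\cap\norm G{S_1}$. Assuming the latter fails, the paper applies Theorem~\ref{simple} contrapositively to $\overline{H}=\norm G{S_1}/\cent G{S_1}$ to produce a non-trivial character, restricts it to $S_1$, spreads it along the $P$-orbit to $N$, and extends to $G$ via Lemma~\ref{pquotient}, contradicting the hypothesis.

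You avoid induction and the \cite{NTT07} lemmas altogether: you push the character hypothesis from $G$ down to $N$ (via Corollary~\ref{thecor}; Lemma~\ref{pquotient} would also suffice since $G/N$ is a $p$-group) and then to each $S_i$, apply Theorem~\ref{simple} in the forward direction to each $\bar Z_i$, and finally assemble self-normalization of $P$ in $G$ by a direct coordinate computation. The trade-off is clear: the paper's proof is shorter because the group-theoretic reduction is outsourced to \cite{NTT07}, while your proof is self-contained but pays for it with the delicate third paragraph. Your worry there is well placed but unfounded: the key identity $\bar z^{\,k}=\bar z^{\,\bar k_i}$ in $\bar Z_i$ follows cleanly from the fact that the faithful map $\bar Z_i\hookrightarrow\Aut(S_i)$ satisfies $\phi(\bar z^{\,k})=\phi(\bar z)^{\alpha_k}$ with $\alpha_k$ the automorphism of $S_i$ induced by $k$, and $\alpha_k=\alpha_{k_i}=\phi(\bar k_i)$ since the other coordinates of $k$ centralize $S_i$. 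One small wording point: in your second paragraph you should say ``any $p$-rational $p'$-degree $\chi\in\irr{B_0(Z_i)}$'' at the outset, since the parenthetical justification of $\theta$ being $p$-rational uses this.
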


\begin{proof} We proceed by induction on $|G|$.
Suppose that $L\neq M$ are 
proper normal subgroups of $G$ contained in $N$ such that $L\cap M=1$ (i.e. $P$ is not transitive on the simple normal factors of $N$). By induction,
we have that $\norm G P L=PL$ and $\norm G P M=PM$. Then 
$$\norm G P \leq \norm G P L \cap \norm G P M=PL\cap PM=P(L\cap M)=P.$$
 Hence we may assume that $N$ is a minimal normal subgroup of $G$. Write 
 $$N= S_1 \times \cdots \times S_t,$$ 
 where $S_i=S_1^{u_i}$ for some $u_i \in P$.  Write $H=\norm G {S_1}$, $P_1=P\cap H$, $Q=P\cap N$ and $Q_1=Q\cap S_1$. By Lemma 4.1 and Lemma 2.1(ii) of \cite{NTT07} we have that:
$P$ is self-normalizing in $G$ if, and only if, $\cent {\norm  N Q /Q} P=1$ if, and only if, $\cent{\norm {S_1} {Q_1}/Q_1} {P_1}=1$ if,
 and only if, $P_1$
is self-normalizing
in $S_1P_1$. Hence it suffices to show that $P_1$ is self-normalizing in $S_1P_1$. 
Assume the contrary.
Let $\overline H=H/C$, where $C =\cent G {S_1}$. We have that $S_1\cong
S_1C/C=\overline{S_1} \nor \overline H$, $\overline H /\overline{S_1}$ is a $p$-group and
 $\cent{\overline H} {\overline{S_1}}=1$. 
 We have that
 $\overline{P_1}=P_1C/C \in \syl p {\overline H}$, and
 $\overline H = \overline{S_1} \, \overline{P_1}$.
 We can check that $\overline{P_1}$ 
is not self-normalizing in $\overline H$. By Theorem \ref{simple}, $\overline H$ has a non-trivial $p$-rational character $\gamma$ of $p'$-degree in the principal block.  Let $\gamma_1=\gamma_{S_1}$. Then $\gamma_1 \in \irr{S_1}$ is $P_1$-invariant and lies
in the principal block of $S_1$. By Lemma 4.1 of \cite{NTT07}, we have that 
$$\theta =\gamma_1^{u_1}\times \cdots \times \gamma_1^{u_t} \in \irr N$$ 
is $P$-invariant. Of course $\theta$ is $p$-rational of $p'$-degree and lies in the principal block of $N$. By Lemma \ref{pquotient}, we get a contradiction.
\end{proof}

The following easy observation is stated as a lemma for the reader's convenience.

\begin{lem}\label{normalizers}
Let $N$ and $M$ be distinct normal subgroups of a group $G$.
 Let $P$ be a Sylow $p$-subgroup of $G$. Suppose that $\norm{G/N}{PN/N}$ and $\norm{G/M}{PM/M}$ have
 a normal $p$-complement. If $N\cap M=1$, then $\norm G P$ 
 has a normal $p$-complement.
\end{lem}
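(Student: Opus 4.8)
The plan is to transfer everything to the subgroup $H=\norm GP$ and then exploit that the class of finite groups possessing a normal $p$-complement is closed under passage to subgroups and to direct products.

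First I would record the trivial inclusion $HN/N\le\norm{G/N}{PN/N}$: if $h\in H$ then $(PN)^h=P^hN=PN$, so $hN$ normalizes $PN/N$. By hypothesis $\norm{G/N}{PN/N}$ has a normal $p$-complement, and a subgroup of a group with a normal $p$-complement has one too (intersect with the complement); hence $HN/N$ has a normal $p$-complement. Since $N\nor G$ we have $H\cap N\nor H$ and $HN/N\cong H/(H\cap N)$, so $H/(H\cap N)$ has a normal $p$-complement. The same argument with $M$ in place of $N$ gives $H\cap M\nor H$ and that $H/(H\cap M)$ has a normal $p$-complement.

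Now set $A=H\cap N$ and $B=H\cap M$, normal subgroups of $H$ with $A\cap B\le N\cap M=1$. Therefore the diagonal map $h\mapsto(hA,hB)$ embeds $H$ into $H/A\times H/B$. A direct product of two groups with normal $p$-complements has a normal $p$-complement (the product of the two complements is a normal $p'$-subgroup of $p$-power index), so $H/A\times H/B$ does, and hence so does its subgroup $H$. This says precisely that $\norm GP$ has a normal $p$-complement.

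I do not anticipate any genuine difficulty: the argument is purely formal once one has the two closure properties of ``having a normal $p$-complement'' under subgroups and direct products, each of which is a one-line verification. (For the record, the inclusion $HN/N\le\norm{G/N}{PN/N}$ is in fact an equality, by a Frattini-type argument, but only the inclusion is needed here.)
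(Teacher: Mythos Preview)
Your proof is correct and is essentially the paper's argument. The paper writes $\norm NP$ for your $A=H\cap N$ (and likewise for $M$), invokes the equality $\norm{G/N}{PN/N}=\norm GP\,N/N$ rather than just the inclusion, and phrases the last step as $H=H/(A\cap B)$ having a normal $p$-complement; but the logic---pass from the quotients $G/N$, $G/M$ to $H/A$, $H/B$ and then use $A\cap B=1$---is identical.
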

\begin{proof}
By elementary group theory,
$\norm{G/N}{PN/N}=\norm GP N/N$.
Hence we have that 
$$\norm G P /\norm N P\cong \norm G P N /N$$ 
has  a normal $p$-complement. Similarly, 
$\norm G P /\norm M P$ 
has a normal $p$-complement. Hence also
 $$\norm G P=\norm G P/(\norm N P \cap \norm M P)$$ has a normal $p$-complement.
\end{proof}

We are now ready to prove the main result of this paper, which is Theorem D of 
the introduction (recall that this is equivalent to Theorem A by using  Corollary 6.13 of \cite{Nav98}).
\begin{thm}\label{main}
Let $p$ be an odd prime. Let $G$ be a finite group and let $P \in \syl p G$. 
Then $\norm G P$ has a normal
$p$-complement if and only if
 the only $p$-rational  irreducible
 character of $p'$-degree lying in the principal block of $G$ is the principal character 
 of $G$.
\end{thm}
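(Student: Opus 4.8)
The plan is to prove both directions by reducing to the simple-group situation handled by Theorem \ref{simple} and Corollary \ref{Tiep2}, using the relative Glauberman correspondence (Theorem \ref{relativeGlauberman}) and the extension results of Section 3 as the machinery that transports $p$-rational $p'$-degree principal-block characters up and down normal series. For the ``if'' direction (no nontrivial $p$-rational $p'$-degree character in $B_0(G)$ implies $\norm GP$ has a normal $p$-complement), I would argue by induction on $|G|$ and first dispose of the case $\oh{p'}{G}\neq 1$: setting $N=\oh{p'}{G}$, I would apply Theorem \ref{relativeGlauberman} with the $p$-group $P$ acting by conjugation, $C/N=\cent{G/N}P$, to the trivial character and more generally to push the hypothesis down to $\bar G = G/N$, noting that $\irr{B_0(G/N)}\subseteq\irr{B_0(G)}$ and that $^*$ preserves $p$-rationality and principal blocks in $p'$-degree; then by induction $\norm{\bar G}{\bar P}$ has a normal $p$-complement, and one lifts this back (the kernel $N$ being a $p'$-group, having a normal $p$-complement is inherited). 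So I may assume $\oh{p'}{G}=1$. Next, if $\oh pG\neq 1$ one passes to $G/\oh pG$ in the same spirit; the delicate point is that quotienting by a normal $p$-subgroup only helps after one knows the layer is trivial, so really the reduction should target $\oh{p'}{G}=1=\oh p{G}$, i.e.\ $F^*(G)=\bE(G)$ is a product of simple groups, possibly times... — more carefully, with $\oh{p'}G=1$ the generalized Fitting subgroup $F^*(G)$ is $\oh pG\times\bE(G)$, and after further reduction one gets to the situation where $N:=\bE(G)\oh pG/\oh pG$ or, cleanest, where $G$ has a normal subgroup $N$ that is a direct product of nonabelian simple groups with $\cent GN=1$.

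At that point I would bring in the structural heart: reduce to $G=NP$ (by replacing $G$ with $NP$, since $\norm{NP}{P}$ vs. $\norm GP$ and restriction of principal-block $p'$-degree characters behave well, using Lemma \ref{sub} and the Clifford-theory inductions as in Corollary \ref{thecor}), and then apply Corollary \ref{Tiep2}, which is exactly the statement that under these hypotheses absence of nontrivial $p$-rational $p'$-degree characters in $B_0(G)$ forces $P=\norm GP$, in particular a (trivial) normal $p$-complement. To handle the general reduction to this $N$, Lemma \ref{normalizers} lets me split off distinct minimal normal subgroups, so I may assume $F^*$ has a single ``type,'' and the $p$-solvable radical is pushed into the kernel where the relative Glauberman correspondence and Theorem \ref{teoE}/Corollary \ref{thecor} apply.

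For the ``only if'' direction (if $\norm GP$ has a normal $p$-complement then the only $p$-rational $p'$-degree character in $B_0(G)$ is $1_G$): suppose $\chi\in B_0(G)$ is $p$-rational of $p'$-degree with $\chi\neq 1_G$. By Brauer's characterization of groups with a normal $p$-complement via the principal block (Corollary 6.13 of \cite{Nav98}, quoted after Theorem D), $\norm GP$ having a normal $p$-complement means $l(B_0(\norm GP))=1$, and the McKay-type correspondence for principal blocks should let me move $\chi$ to a nontrivial $p$-rational $p'$-degree character in $B_0(\norm GP)$; but when $\norm GP=P\cdot\oh{p'}{\norm GP}$ with $P$ acting on the $p'$-part, the only $p$-rational $p'$-degree irreducible character in $B_0$ of such a group is the trivial one (its principal block is that of $P$, essentially), a contradiction. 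Concretely I would realize this transport through Corollary \ref{thecor} run in reverse together with Theorem \ref{relativeGlauberman}: starting from $\chi$ one produces a $P$-invariant $p$-rational $p'$-degree constituent of $\chi$ restricted to a suitable normal subgroup, descends along a chief series, and the hypothesis ``$\norm GP$ has a normal $p$-complement'' propagates down (by Lemma \ref{normalizers} and the $N/M$-type reductions) to contradict the simple-group case of Theorem \ref{simple}, where self-normalizing Sylow is equivalent to no nontrivial such character.

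The main obstacle I anticipate is the ``only if'' direction when $F^*(G)$ involves nonabelian simple groups: one must guarantee that a nontrivial $p$-rational $p'$-degree character in $B_0(G)$ actually survives — with its $p$-rationality and nontriviality intact — the descent through the normal series down to a quasisimple or simple section where Theorem \ref{simple} applies, and conversely that ``$\norm GP$ has a normal $p$-complement'' is correctly inherited by the relevant section $\overline H$ appearing there. Controlling $p$-rationality under Clifford theory is exactly the subtlety flagged in the introduction (a $p$-rational character need not have a $p$-rational extension), so the real work is checking that the specific correspondences $^*$ of Theorems \ref{relativeGlauberman} and \ref{teoE} — which are constructed precisely to be $\mathcal G$-equivariant and principal-block-preserving in $p'$-degree — can be chained together without losing these properties, and that the induction on $|G|$ closes in the two ``$MP<G$'' and ``$\cent MQ\le N$'' branches exactly as in the proof of Corollary \ref{thecor}.
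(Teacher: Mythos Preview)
Your proposal assembles the right tools (induction, the relative Glauberman correspondence, Corollary \ref{thecor}, and Corollary \ref{Tiep2}), but it has the difficulty of the two directions backwards, and the ``if'' direction contains a genuine gap.

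For the ``only if'' direction, the paper does \emph{not} descend through a normal series toward Theorem \ref{simple}. It simply invokes Theorem A of \cite{NTV14}: when $\norm GP$ has a normal $p$-complement $X$, there is a canonical bijection $^*:\Irr_{p'}(B_0(G))\to\Irr_{p'}(B_0(\norm GP))$ with $\Q(\chi)=\Q(\chi^*)$ and $\chi^*$ linear; since each $\chi^*$ lies in $\irr{\norm GP/X}\cong\irr P$ with $|P|$ odd, the only $p$-rational $\chi^*$ is trivial. This is one paragraph. Your proposed ``Corollary \ref{thecor} run in reverse'' is neither needed nor easy to make precise: it is unclear how to push a nontrivial $\chi$ down to a simple section while preserving nontriviality, and ``$\norm GP$ has a normal $p$-complement'' does not obviously propagate to the almost-simple sections $\overline H$ in the form (self-normalizing Sylow) that Theorem \ref{simple} requires.

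The substantial work is the ``if'' direction, and your reduction ``replace $G$ by $NP$'' is where the argument breaks. Once $N$ is the unique minimal normal subgroup and a product of nonabelian simples, induction on $G/N$ only yields $\norm{G/N}{PN/N}=PN/N\times V/N$ with $V/N\le K/N:=\oh{p'}{G/N}$; it does \emph{not} give that $PN/N$ is self-normalizing in $G/N$. But that self-normalizing hypothesis is exactly what Corollary \ref{thecor} needs, so you cannot directly lift a $P$-invariant $\nu\in B_0(N)$ (coming from a putative $\eta\in B_0(NP)$) to a character of $G$, and hence you cannot transfer the hypothesis from $G$ to $NP$. The paper's route is more indirect: apply the relative Glauberman correspondence (Theorem \ref{relativeGlauberman}) between $K$ and $V$, then Corollary \ref{thecor} with respect to $K\nor G$ (where $PK/K$ \emph{is} self-normalizing), to show that $PV=\norm GP\,N$ satisfies the hypothesis; by induction one may then assume $G=PV$, so $PN\nor G$ and $G/N=K/N\times PN/N$. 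Two further reductions (forcing $K=N\cent KQ$ via \cite[Lemma 3.1]{NT11}, then transferring the hypothesis to $PN$ via \cite[Theorem 3.2]{NT11} and Lemma \ref{pquotient}) finally put Corollary \ref{Tiep2} in reach when $G=NP$; the residual case $NP<G$ is closed using \cite{GMN04} and \cite[Lemma 3.1(i)]{NTV14}. Your outline also passes too quickly over the case where $N$ is a $p$-group: one cannot simply ``pass to $G/\oh pG$,'' since the hypothesis need not be inherited; the paper instead uses Hall--Higman, the ordinary Glauberman correspondence, and \cite[Theorem 6.1]{NTT07} to produce a forbidden character directly.
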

\begin{proof}

Suppose that $\norm GP$ has a normal $p$-complement.
By Theorem A of \cite{NTV14}, we have that there is a canonical bijection
$$^*: {\rm Irr}_{p'}(B_0(G))  \rightarrow {\rm Irr}_{p'}(B_0(\norm GP)).$$
In fact, if $\chi  \in {\rm Irr}_{p'}(B_0(G))$,
then $\chi_{\norm GP}=\chi^* + \Delta$,
where $\chi^* \in \irr{\norm GP}$ is linear in the principal block of $\norm GP$,
and $\Delta$ is zero or a character such that all its irreducible
constituents have degree divisible by $p$. In particular, we
 see that $^*$ commutes with
 the action of the absolute Galois group $\mathcal G$,
 and therefore
$\Q(\chi)=\Q(\chi^*)$. Since $\norm GP$ has a normal $p$-complement $X$,
we have that $\chi^* \in \irr{\norm GP/X}$ is the character of an
odd-order $p$-group $P$. Hence $\chi^*$ is never $p$-rational, unless $\chi^*=1$.
Therefore, unless $\chi=1$.  This proves one direction.

\medskip

We assume now that
the only $p$-rational  irreducible
 character of $p'$-degree lying in the principal block of $G$ is the principal character 
 of $G$, and we prove that $\norm GP$ has a normal $p$-complement,
 by induction on $|G|$.

\medskip

{\sl Step 1. We may assume that $G$ has a unique minimal normal subgroup $N$.
Also $\norm GP N/N$ has a normal $p$-complement $V/N \leq K/N=\oh{p'}{G/N}$,
and $G/K$ has self-normalizing Sylow $p$-subgroups.} 
\smallskip

Let $N$ and $M$ be distinct minimal normal subgroups of $G$. Since 
$$\irr{B_0(G/N)}\sbs \irr{B_0(G)},~~~\irr{B_0(G/M)}\sbs \irr{B_0(G)},$$
by the inductive hypothesis,
 we have that $G/N$ and $G/M$ have
Sylow normalizers with a normal $p$-complement. By Lemma \ref{normalizers},   $\norm G P$ has a normal $p$-complement too.

Write $\norm GPN/N=PN/N \times V/N$.
By Theorem 3.2 of \cite{NTV14}, we have that $V/N\leq {\bf O}_{p'}(G/N)=K/N$. 
Also, $\norm GP K=PK$, and $G/K$ has self-normalizing Sylow
$p$-subgroups.
\smallskip

{\sl Step 2. We may assume that
 $N$ is not a $p'$-group. In particular ${\bf O}_{p'}(G)=1$.}
\smallskip

We know that  $\norm {G/N}{PN/N}=PN/N\times V/N$. If $N$ is a
$p'$-group, then $V$ is a normal $p$-complement of $\norm G P N$. Hence $V\cap \norm G P \nor \norm G P$ is a $p$-complement of $\norm G P$ and we are done.

\smallskip

{\sl Step 3. We may assume $N$ is not a $p$-group. In particular, $N$ is a direct product of isomorphic non-abelian simple groups of order divisible by $p$.}
\smallskip

Suppose that $N$ is a $p$-group. 
We know that 
$$\norm GP/N=\norm {G/N}{P/N}=P/N\times V/N,$$ 
so that $K$ is a $p$-solvable group and ${\bf O}_{p}(K)=N$. Recall that ${\bf O}_{p'}(K)=1$, by Step 2. By Hall-Higman Lemma 1.2.3 $\cent K N \leq N$. We have that 
$${\bf O}_{p'}(\norm G P)\leq \cent K N \leq N.$$ 
Hence ${\bf O}_{p'}(\norm G P)=1$.
By Problem (4.8) of \cite{Nav98}, we have that $G$ has a unique $p$-block
of maximal defect, namely the principal one. Consequently every irreducible character of $G$ of $p'$-degree lies in $B_0(G)$. We have that $V/K=\cent{K/N}{P/N}$. By the Glauberman correspondence 
$$|{\rm Irr}_P (K/N)|=|\irr{V/N}|.$$
If $V/N=1$, then $\norm G P =P$ and we would be done. Hence we may assume that there is some non-trivial $\gamma \in {\rm Irr}_P(K/N)$. In particular, $\gamma$ is $p$-rational and
 has $p'$-degree. Since $G/K$ has a self-normalizing Sylow $p$-subgroup, we have that Theorem 6.1 of \cite{NTT07} produces a $p'$-degree $p$-rational character $\chi \in \irr G$ lying over $\gamma$. Since $1 \neq \chi$ lies in the principal block of $G$ we get a contradiction. 

\smallskip

{\sl Step 4. We may assume that $PN\nor G$. Hence $G/N=K/N \times PN/N$}. 
\smallskip

Recall that by induction 
$$\norm {G/N}{PN/N}=PN/N\times V/N,$$ 
where $V/N\leq {\bf O}_{p'}(G/N)=K/N$.
Notice that  $V/N=\cent{K/N}{PN/N}$. Let $\gamma \in \irr{PV}$ be $p$-rational of $p'$-degree lying in $B_0(PV)$. Hence, $\gamma_V \in \irr V$ lies in $B_0(V)$. By the relative Glauberman correspondence, Theorem \ref{relativeGlauberman}, there is a unique $\tau \in {\rm Irr}_P(K)$ such that $\tau^*=\gamma_V$. Also $\tau$ lies 
in $B_0(K)$. By Corollary \ref{thecor}, there exists $\chi \in \irr G$ over $\tau$
which is $p$-rational of $p'$-degree and lies in the principal block. By assumption, $\chi$ is the trivial character and hence $\tau=1$. We conclude $\tau^*=\gamma_V=1$. Now, $\gamma \in \irr{PV/V}$ is linear and rational. Since $p$ is odd, it must be $\gamma=1_{PV}$. We have shown that $PV=\norm G PN$ has a unique $p$-rational irreducible character of $p'$-degree in its principal block. If $PV<G$, then by induction $\norm {PV} P=\norm G P$ is $p$-decomposable. Hence, we may assume $PV=G$. In particular, $PN\nor G$ and $V=K$. 

\smallskip

{\sl Step 5. Let $Q=P\cap N \in \syl p G$. We may assume $N\cent K Q =K$.} 

\smallskip

By the Frattini argument $G=N\norm G Q$. Then $N\cent K Q \nor K$.  Assume that $N\cent K Q <K$. By Lemma 3.1 of \cite{NT11}, we have that $B_0(K)$ is the unique block of $K$ that covers the principal block of $N\cent K Q$. Let $1\neq \gamma \in \irr{K/N\cent K Q}$. Then
$\gamma$ lies in $B_0(K)$ and is $p$-rational of $p'$-degree. Since $G/N=K/N\times PN/N$, by Lemma \ref{pquotient}, we have that $\gamma$ extends to a $p$-rational character of $p'$-degree in $B_0(G)$. This is a contradiction because $1\neq \gamma$.

\smallskip

{\sl Step 6. We may assume that $p=3$ and that
$N$ is a direct product of groups isomorphic to ${\rm PSL}_2(3^{3^a})$, for some $a\geq 1$.
In particular, $Q$ is abelian.
Also $NP<G$.}

\smallskip

Let $\eta \in \irr{PN}$ be $p$-rational of $p'$-degree lying in $B_0(PN)$. Then $\nu=\eta_N\in \irr N$ is $P$-invariant, $p$-rational of $p'$-degree and
lies in $B_0(N)$. By Theorem 3.2 of \cite{NT11}, $\nu$ extends to a unique $\hat \nu \in \irr{K_\nu}$ in $B_0(K_\nu)$,
where $K_\nu$ is the stabilizer of $\nu$ in $K$. In particular, 
by uniqueness, we have that
$\hat \nu$ is $p$-rational and $P$-invariant. Write $\rho=(\hat \nu)^K\in \irr K$. Then $\rho$ is $p$-rational, $P$-invariant and of $p'$-degree.  By Lemma \ref{pquotient} we conclude that $\rho$
has an extension to a $p'$-degree
$p$-rational character in the principal block of $G$.
We conclude that $\rho=1_K$. This implies $\hat \nu =1_K$. Hence $\nu=1_N$. Thus $\eta \in \irr{PN/N}$ is linear and rational. Since $p$ is odd, this implies $\eta=1$. We have proved that $PN$ has a unique $p'$-degree $p$-rational irreducible character in $B_0(PN)$.
If $G=NP$, then the theorem follows from Corollary \ref{Tiep2}.
 Hence,
 we may assume that
  $PN<G$. Then, 
  by the inductive hypothesis,
   $\norm{PN}P=P\times Y$. By Theorem 3.2 of \cite{NTV14}, we have that $Y\leq {\bf O}_{p'}(PN)\leq N$. Hence $Y=1$ (by Step 3). By the main result of \cite{GMN04}, we have that the non-abelian composition factors of $PN$ are of type ${\rm PSL}_2(3^{3^a})$ with $a\geq 1$. 

\smallskip

{\sl Step 7. The final contradiction.} 
\smallskip

We have that $\cent K Q =Y_0 \times Q$, where $Y_0$ is a $p'$-group. 
 If 
$$N=S_1\times \cdots \times S_t,$$ 
where each $S_i$ is isomorphic to ${\rm PSL}_2(3^{3^a})$, then write $Q_i=Q\cap S_i \in \syl p {S_i}$. Since $Y_0$ centralizes $1 \ne Q_i$,
 it follows that $Y_0$ normalizes $S_i$. We have that 
$$Y_i=Y_0\cent G {S_i}/\cent G {S_i}\leq {\rm Aut}(S_i)$$ 
centralizes $Q_i$. By Lemma 3.1(i) of \cite{NTV14}, it follows that $Y_i=1$. Thus $Y_0 \leq \cent G {S_i}$ for every $i$, and so 
 $Y_0 \leq \cent G N$. By Step 1,
 $G$ has a unique minimal normal subgroup,
 so $\cent G N=1$. Hence $Y_0=1$ and $K=N$. 
 This implies that $G=NP$, but this is impossible
 by Step 6.
\end{proof} 

\section{Simple Groups}
The aim of this section is to prove Theorem \ref{simple}. We begin with some observations.

\begin{lem}\label{red}
Let $p$ be a prime and let $S$ be a normal subgroup of $G$ of $p$-power index.
\begin{enumerate}[\rm(a)]
\item The principal  block $B_0(G)$  is the only  block of $G$ that covers the 
principal  block $B_0(S)$ of $S$.
\item Suppose that $p > 2$ and that $\Irr(S) \cap B_0(S)$ contains a rational
$G$-invariant character $\alpha$. Then $\alpha$ extends to a rational character 
$\beta \in B_0(G)$.  
\item Theorem \ref{simple} holds if $G$ has a self-normalizing Sylow $p$-subgroup $P$. 
\end{enumerate} 
\end{lem}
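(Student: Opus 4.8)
The three parts are essentially independent, and the plan is to treat them in turn, each by quoting material already established. Part (a) is immediate: since $S\nor G$ has $p$-power index, Corollary 9.6 of \cite{Nav98} guarantees that every block of $S$ is covered by a unique block of $G$, while $B_0(G)$ covers $B_0(S)$ because $1_S$ occurs in $(1_G)_S$; hence $B_0(G)$ is the only block of $G$ covering $B_0(S)$ (simplicity of $S$ plays no role here).

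For part (b) the plan is: extend $\alpha$, then adjust for rationality. Because $\alpha$ is rational, $\det\alpha$ has order dividing $2$, which is prime to the odd prime $p$; since $G/S$ is a $p$-group, the obstruction to extending $\alpha$ to $G$ — a $p$-power whose order also divides that of $\det\alpha$ — is trivial, so $\alpha$ extends to $G$. As $p$ is odd, $\alpha$ then has a unique $p$-rational extension $\beta\in\irr G$; this is exactly the extension step used in the proof of Theorem~\ref{teoE} (resting on Lemma~\ref{pquotient}), and Lemma~\ref{pquotient} also identifies $\beta$ as the unique $p$-rational constituent of $\alpha^G$ and shows that $\beta\in B_0(G)$ because $\alpha\in B_0(S)$. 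Finally, for every $\sigma\in\mathcal G$ the character $\beta^\sigma$ is again a $p$-rational extension of $\alpha^\sigma=\alpha$ lying in $B_0(G)$, so $\beta^\sigma=\beta$ by uniqueness; thus $\Q(\beta)=\Q$ and $\beta$ is rational, as required.

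For part (c) the plan is to repeat the argument of the first paragraph of the proof of Theorem~\ref{main}. Here $\norm GP=P$ has a (trivial, hence normal) $p$-complement, so Theorem A of \cite{NTV14} supplies a canonical, $\mathcal G$-equivariant bijection $^*\colon \irrp{B_0(G)}\to \irrp{B_0(\norm GP)}$ with $\chi^*$ linear and $\Q(\chi)=\Q(\chi^*)$. If $\chi\in \irrp{B_0(G)}$ is $p$-rational, then $\chi^*$ is a rational linear character of the odd $p$-group $P$, forcing $\chi^*=1_P$ and hence $\chi=1_G$. Thus the right-hand side of Theorem~\ref{simple} holds; since its left-hand side holds by hypothesis, the equivalence in Theorem~\ref{simple} is established in this case. (This uses neither the simplicity of $S$ nor the condition $\cent GS=1$.)

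I do not expect a genuine obstacle inside this lemma: all three parts are short deductions from cited results. If a single point must be flagged, it is the bookkeeping in part (b) — first that, for $p$ odd, the $p$-rational object produced by Lemma~\ref{pquotient} is honestly an extension of $\alpha$ and not merely a constituent of $\alpha^G$, and second that rationality of $\beta$ must be extracted from the uniqueness statement rather than assumed. The real difficulty of this section is downstream: proving Theorem~\ref{simple} itself, which requires a case analysis over the families of finite simple groups $S$.
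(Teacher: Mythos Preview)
Your argument is correct in all three parts, but you take a different route from the paper in two of them.

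For (a), the paper invokes Green's theorem (Theorem~8.11 of \cite{Nav98}) on Brauer characters rather than Corollary~9.6; either works.

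For (b), the paper bypasses the determinantal-order extension argument and Lemma~\ref{pquotient} entirely: it quotes \cite[Lemma~2.1]{NT1}, which directly gives a unique \emph{real} extension $\beta$ of $\alpha$ to $G$ (for $p>2$, $G/S$ a $p$-group), and then rationality of $\beta$ follows from uniqueness as in your last step. Your approach via the unique $p$-rational extension is equally valid and has the virtue of staying inside the machinery of the present paper; the paper's approach is shorter but imports an external lemma. Your caution about whether the $p$-rational constituent is honestly an extension is well placed, but harmless: since you first establish that $\alpha$ extends, Gallagher shows every constituent of $\alpha^G$ is an extension.

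For (c), the paper takes a one-line shortcut: \cite[Theorem~A]{NTT07} already asserts that a group with self-normalizing odd Sylow $p$-subgroup has $1_G$ as its only $p$-rational irreducible character of $p'$-degree, full stop. Your argument via the \cite{NTV14} bijection (reproducing the opening of the proof of Theorem~\ref{main}) is correct but more elaborate than needed here; the paper saves that argument for the general case where $\norm GP$ merely has a normal $p$-complement rather than being $P$ itself.
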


\begin{proof}
(a) By Green's Theorem 8.11 of \cite{Nav98}, $1_G$ is the unique irreducible $p$-Brauer character of $G$ that lies 
above $1_S$. Hence the statement follows.

\smallskip
(b) By \cite[Lemma 2.1]{NT1}, $\alpha$ has a unique real extension $\beta$ to $G$, whence $\beta$ is also rational.
Since $\alpha \in B_0(S)$, $\beta \in B_0(G)$ by (a).

\smallskip
(c) By \cite[Theorem A]{NTT07}, $1_G$ is the unique $p$-rational irreducible character of $p'$-degree of $G$, whence
the claim follows.
\end{proof}

By virtue of Lemma \ref{red}(c), it remains to prove the ``if'' direction of Theorem \ref{simple}.

\begin{lem}\label{spor}
Theorem \ref{simple} holds if $S$ is either $\tw2 F_4(2)'$ or one of the $26$ sporadic simple groups.
\end{lem}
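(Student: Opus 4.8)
The plan is to reduce immediately to the case $G=S$ and then to a finite verification against the known character tables. Since $\cent GS=1$, conjugation embeds $G$ into $\Aut(S)$ with $S$ identified with the inner automorphisms, so $G/S$ embeds into $\Out(S)$. For $S=\tw2 F_4(2)'$ and for each of the $26$ sporadic simple groups one has $|\Out(S)|\le 2$; as $G/S$ is a $p$-group with $p$ odd, this forces $G=S$. By Lemma \ref{red}(c) we are then reduced to the following: for every pair $(S,p)$ with $S$ as in the statement and $p$ an odd prime dividing $|S|$ for which a Sylow $p$-subgroup $P$ of $S$ is \emph{not} self-normalizing, exhibit a non-trivial $p$-rational irreducible character of $p'$-degree lying in the principal block $B_0(S)$ (when $\norm SP=P$ there is nothing to prove, again by Lemma \ref{red}(c)).

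For a fixed $S$ the odd primes $p$ with $\norm SP\neq P$ form a short explicit list, read off from the local subgroup structure recorded in \cite{GAP} (and in the Atlas of Finite Groups). For each such $(S,p)$ one consults the ordinary character table of $S$ together with its power maps and its $p$-block distribution, all stored in \cite{GAP}: $p$-rationality of $\chi\in\Irr(S)$ is detected from the Galois action on the table — a character is $p$-rational precisely when it is fixed by the subgroup of $\mathrm{Gal}(\Q(\zeta_{|S|})/\Q)$ fixing $\Q(\zeta_{|S|_{p'}})$, which the power maps encode — membership in $B_0(S)$ from the central-character congruences, and one picks a non-trivial $\chi\in\Irr(B_0(S))$ of degree prime to $p$ with $\Q(\chi)$ contained in a cyclotomic field of conductor prime to $p$. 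In the great majority of pairs a \emph{rational-valued} irreducible character of small degree already lies in $B_0(S)$ and has degree prime to $p$, which disposes of the pair at once.

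When $p^2\nmid|S|$ the check can be organized more structurally: then $P\cong C_p$ is a cyclic full Sylow subgroup, so $B_0(S)$ has maximal defect and \emph{every} ordinary character in it has $p'$-degree, and the Brauer tree of $B_0(S)$ is known in all cases. The hypothesis $\norm SP\neq P$ forces the inertial index $e=|\norm SP:\cent SP|$ to exceed $1$, and from the known tree one locates a non-trivial vertex fixed by the relevant Galois subgroup; the corresponding irreducible character is the one we need. Thus the genuinely table-dependent work is confined to the finitely many triples with $p$ odd and $p^2\mid|S|$, where $P$ is non-cyclic and there is no shortcut. The main obstacle is therefore not conceptual but a matter of bookkeeping: one must account for every odd prime dividing the order of each of these $27$ groups and correctly extract principal-block membership and $p$-rationality from the stored data — most delicately for the two largest groups, the Baby Monster $B$ and the Monster $\mathbb M$, whose ordinary character tables, though enormous, are fully available in \cite{GAP}.
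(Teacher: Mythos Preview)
Your proposal is correct and follows the paper's approach exactly: the paper's entire proof reads ``Direct computation using \cite{GAP}; note that in this case $G=S$'', and you have supplied a careful elaboration of precisely this---first the reduction to $G=S$ via $|\Out(S)|\le 2$, then the reduction to the ``if'' direction via Lemma~\ref{red}(c), then the table check. Your added commentary on organizing the cyclic-Sylow cases via Brauer trees is a reasonable heuristic for trimming the work, though note that by \cite{GMN04} no sporadic group has a self-normalizing odd Sylow subgroup, so in fact \emph{every} odd prime divisor of $|S|$ must be checked, not a ``short explicit list'' of them; this does not affect correctness since the GAP verification covers all cases regardless.
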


\begin{proof}
Direct computation using \cite{GAP}; note that in this case $G=S$. 
\end{proof}

\begin{lem}\label{alt}
Theorem \ref{simple} holds if $S = \AAA_n$, $n \geq 5$, is an alternating group.
\end{lem}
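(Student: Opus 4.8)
The plan is to reduce at once to the case $G=S=\AAA_n$, apply Lemma~\ref{red}(c), and then build the required character inside $\SSS_n$. Since $\cent GS=1$ and $S\nor G$, conjugation embeds $G$ into $\Aut(\AAA_n)$; as $G/S$ is a $p$-group with $p$ odd while $\Out(\AAA_n)$ is a $2$-group (cyclic of order $2$, or a Klein four-group when $n=6$), we get $G=S=\AAA_n$. By Lemma~\ref{red}(c) we may then assume that a Sylow $p$-subgroup $P$ of $\AAA_n$ is not self-normalizing, and we must exhibit a nontrivial $p$-rational irreducible character of $p'$-degree in $B_0(\AAA_n)$. (Writing $n=\sum_i a_ip^i$ in base $p$ and $P=\prod_i W_i^{a_i}$ with $W_i=C_p\wr\cdots\wr C_p$ the Sylow $p$-subgroup of $\SSS_{p^i}$, one checks from $\norm{\SSS_p}{C_p}=C_p\rtimes C_{p-1}$ that $|\norm{\SSS_n}{P}/P|\ge 4$ whenever $n\ge 5$ and $p\le n$, so $P$ is in fact never self-normalizing in $\AAA_n$ here; but only the conclusion of Lemma~\ref{red}(c) is needed. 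Thus it suffices to prove that for every $n\ge 5$ and every odd prime $p\le n$ the block $B_0(\AAA_n)$ contains a nontrivial $p$-rational irreducible character of $p'$-degree.)

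Every irreducible character of $\SSS_n$ is rational, so I would look for a partition $\lambda\vdash n$ with (i) $\chi^\lambda\in B_0(\SSS_n)$, (ii) $p\nmid\chi^\lambda(1)$, (iii) $\lambda\ne\lambda'$, and (iv) $\lambda\notin\{(n),(1^n)\}$. For such a $\lambda$ the restriction $\psi:=(\chi^\lambda)_{\AAA_n}$ is irreducible by (iii), is rational and hence $p$-rational, has $p'$-degree by (ii), is nontrivial by (iv) (since $\chi^\lambda$ is neither the trivial nor the sign character), and lies in $B_0(\AAA_n)$ by Lemma~\ref{sub} because $\AAA_n\nor\SSS_n$ and $\chi^\lambda\in B_0(\SSS_n)$. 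By the Nakayama conjecture, (i) holds exactly when the $p$-core of $\lambda$ equals the $p$-core of $(n)$, namely $(r)$ with $r=n\bmod p$; and a character of $B_0(\SSS_n)$ has $p'$-degree precisely when it has height zero, since the defect group of the principal block is all of $P$.

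I would then distinguish three cases according to $r$. If $r=0$ (so $p\mid n$), take $\lambda=(n-1,1)$: its $p$-core is empty, equal to the $p$-core of $(n)$; its degree $n-1$ is prime to $p$; and $\lambda'=(2,1^{n-2})\ne\lambda$ as $n\ge 4$. If $r=1$ (so $n\equiv 1\pmod p$), take $\lambda=(n-2,2)$: a short abacus computation gives $p$-core $(1)$, equal to the $p$-core of $(n)$; the degree $n(n-3)/2$ is prime to $p$ because $n\equiv 1$ forces $n-3\equiv -2\not\equiv 0\pmod p$ (here $p$ is odd); and $\lambda'=(2,2,1^{n-4})\ne\lambda$ as $n\ge 5$. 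If $r\ge 2$, then every $\chi^\mu\in B_0(\SSS_n)$ automatically has $\mu\ne\mu'$, because its $p$-core $(r)$ is not self-conjugate, so one only needs a single nontrivial height-zero character of $B_0(\SSS_n)$: when $n<p^2$ the defect group $P$ is abelian, whence (classically, for symmetric-group blocks) all $k(B_0(\SSS_n))\ge p\ge 3$ characters of $B_0(\SSS_n)$ have height zero and any nontrivial one works; when $n\ge p^2$ one picks a nontrivial height-zero character using the McKay conjecture for $\SSS_n$, or, concretely, Macdonald's description of the $p'$-degree irreducible characters of $\SSS_n$ via the $p$-core tower (which furnishes, e.g., the character whose $p$-quotient is supported on a single runner different from the one carrying the trivial character).

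The main obstacle I anticipate is precisely the case $r\ge 2$ with $n\ge p^2$: there the abelian-defect shortcut is unavailable, and one must either invoke the combinatorial count of $p'$-degree irreducible characters of $\SSS_n$ lying in the principal block (to be sure this number exceeds $1$) or construct a suitable partition by hand on the abacus, the care being to verify at once that it has $p$-core $(r)$, has $p'$-degree, and differs from $(n)$. Apart from this, only finitely many small values of $n$ require separate attention — those for which the explicit $\lambda$ chosen when $r=0$ or $r=1$ happens to coincide with $(n)$, $(1^n)$, or $\lambda'$ — and these can be dispatched by direct computation, e.g.\ with \cite{GAP}.
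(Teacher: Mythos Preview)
Your reduction to $G=S=\AAA_n$ and the overall strategy---find $\lambda \vdash n$ in $\SSS_n$ with the right $p$-core, $p'$-degree, and $\lambda \neq \lambda'$, then restrict to $\AAA_n$---match the paper exactly, and your choices $(n-1,1)$ for $r=0$ and $(n-2,2)$ for $r=1$ are correct; the paper also uses $(n-2,2)$ when $p\mid(n-1)$. (Your worry about small exceptional $n$ is unfounded: for $n\ge 5$ neither $(n-1,1)$ nor $(n-2,2)$ is ever $(n)$, $(1^n)$, or self-conjugate.)

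The gap is exactly where you flag it: $r \geq 2$ with $n \geq p^2$. Your appeal to the McKay conjecture for $\SSS_n$ or to Macdonald's $p$-core tower only guarantees abstractly that $B_0(\SSS_n)$ has more than one height-zero character; you never write one down and verify $\lambda \neq \lambda'$ and $\lambda \notin \{(n),(1^n)\}$, and the hint about a $p$-quotient supported on a different runner is not precise enough to pin down a partition of $p'$-degree without further argument. The paper avoids all of this with a single explicit hook that handles every case $p\nmid(n-1)$ (so both $r=0$ and all $r\geq 2$) at once: let $p^k$ be the largest power of $p$ occurring in the base-$p$ expansion of $n-1$ and take $\lambda = (n-p^k, 1^{p^k})$. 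Its degree $\binom{n-1}{p^k}>1$ is coprime to $p$ by Lucas' theorem; a comparison of $\beta$-sets shows that $\lambda$ and $(n)$ have the same number of beads on every runner of the $p$-abacus, hence the same $p$-core; and $\lambda=\lambda'$ would force $n = 2p^k+1$, hence $p\mid(n-1)$ (as $n\geq 5$ gives $k\geq 1$), contrary to assumption. This one construction replaces both your abelian-defect detour for $n<p^2$ and the unresolved case $n\geq p^2$.
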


\begin{proof}
As mentioned above, it suffices to prove the ``if'' direction of Theorem \ref{simple}, that is,
$B_0(G)$ contains a nontrivial $p$-rational irreducible character of $p'$-degree, where $G = S = \AAA_n$.
Let $H := \SSS_n$.

Suppose that $p|(n-1)$. Then the character $\chi$ of $H$ labelled by the partition $(n-2,2)$ has degree 
$n(n-3)/2 \geq 5$ that is coprime to $p$, and $p$-core $(1)$, whence $\chi \in B_0(H)$. Since
$\chi_S$ is irreducible, it has the desired properties.

Assume now that $p \nmid (n-1)$. Write $n-1 = \sum^k_{i=1}a_ip^i$ 
with $a_i \in \ZZ$, $0 \leq a_i < p$, and $a_k > 0$. Then the character $\chi$ of $H$ labelled by the partition $(n-p^k,1^{p^k})$ has degree 
$\binom{n-1}{p^k} > 1$ that is coprime to $p$, and the same $p$-core as of $1_H$, whence $\chi \in B_0(H)$. Since 
$n \neq 2p^k+1$, the partition $(n-p^k,1^{p^k})$ is not self-associate, and so $\chi_S$ is irreducible and has the desired properties.
\end{proof}

In the case of Theorem \ref{simple} where $S$ is a simple group of Lie type, we will actually prove more than what is needed for the 
``if'' direction; we believe the established results will be useful in other applications as well. We refer the reader to \cite{C} and 
\cite{DM} for basics on complex representations of finite groups of Lie type.
 
\begin{thm}   \label{cross}
 Let $S \not\cong \tw2 F_4(2)'$ be a finite simple group of Lie type in
 characteristic~$r$ and $p\ne r$ an odd prime. Then there exists a non-trivial,
 rational-valued, $\Aut(S)$-invariant, unipotent character of $p'$-degree
 that belongs to the principal $p$-block of $S$. 
\end{thm}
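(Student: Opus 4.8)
The plan is to treat the statement as a case analysis over the families of finite simple groups of Lie type, in each case exhibiting an explicit unipotent character with the required four properties: nontriviality, rational values, invariance under $\Aut(S)$, $p'$-degree, and membership in $B_0(S)$. The key observation that makes such a uniform approach feasible is that unipotent characters are always $\Aut(S)$-invariant up to the (usually small) action of field and graph automorphisms, and that their rationality can be read off from the known parametrization (Lusztig's classification): with very few exceptions (the cuspidal unipotent characters of $E_7$ and $E_8$ with non-rational eigenvalues of Frobenius, and the pair in ${}^2\!E_6$, etc.), unipotent characters are rational-valued, and in particular the ones labelled by ``small'' partitions/symbols are rational. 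So the strategy reduces to: for each family, pick a small unipotent character — typically one of degree $\Phi_d(q)$-type, i.e. essentially a single cyclotomic polynomial evaluated at $q$ — and verify $p \nmid$ that degree and that it lies in the principal block.

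\textbf{Step 1: membership in the principal block.} The cleanest tool is the theory of blocks of unipotent characters. A unipotent character lies in the principal $p$-block precisely when (after choosing the relevant $d = d_p(q)$, the multiplicative order of $q$ mod $p$) it lies in the same $d$-Harish-Chandra series / $d$-cuspidal pair as the trivial character — equivalently, by the work of Broué--Malle--Michel and Cabanes--Enguehard, when its degree is divisible by the full $\Phi_d$-part of $|S|_{\Phi_d}$ dividing $|S|/(\text{$d$-cuspidal datum})$; more simply, for $p$ large the principal block is the one containing all unipotent characters of $p'$-degree. The concrete device I would use: a unipotent character of $p'$-degree lies in $B_0(S)$ iff the generic degree polynomial, evaluated at $q$, is not divisible by $p$ and the character is $p$-regular-linked to $1_S$; in practice I would invoke the explicit description of principal-block unipotent characters in terms of $d$-cores of the labelling symbol/partition, exactly as in the alternating group case (Lemma \ref{alt}), where ``same $p$-core'' did the job. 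So for classical groups the natural choice is the unipotent character whose symbol differs from that of $1_S$ by removing a single $d$-hook, giving degree essentially $\binom{\cdots}{\cdots}_q$ coprime to $p$ and automatically in the principal block.

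\textbf{Step 2: the exceptional groups and small rank.} For the exceptional types one argues type-by-type from the explicit tables of unipotent degrees (in \cite{C}): one locates a unipotent character of degree $q^a \Phi_d(q)^{\pm}$-shape that avoids $p$ for every relevant $d$, and checks it is in $B_0$ and rational. The genuinely delicate points — and where I expect the main obstacle to lie — are (i) the very small cases where $S$ has few unipotent characters and the ``small'' candidate might coincide with the Steinberg character (which has $p$-divisible degree) or be non-rational, forcing an ad hoc choice; the families ${}^2B_2(q)$, ${}^2G_2(q)$, ${}^2F_4(q)$, ${}^3D_4(q)$, and low-rank $\PSL_2, \PSL_3, \PSU_3, \Sp_4$, $G_2$ will each need individual inspection, and (ii) ensuring rationality in the presence of cuspidal unipotent characters with irrational Frobenius eigenvalues — here one must simply verify that the chosen character is not one of the finitely many ``bad'' ones. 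A further nuisance is that when $p \in \{3\}$ and $q$ is small the order $d$ can be $1$ or $2$, making $\Phi_d(q) = q-1$ or $q+1$ possibly divisible by $p$, which is precisely why one wants a candidate whose degree involves a cyclotomic polynomial $\Phi_d(q)$ with $d$ matched to $d_p(q)$ so that $p \nmid \Phi_d(q)/(\text{principal part})$; balancing nontriviality, $p'$-degree, and rationality simultaneously across all $(S,p,q)$ is the real work.

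\textbf{Step 3: assembling Theorem \ref{simple}.} Granting Theorem \ref{cross}, together with Lemma \ref{red}(c) (reducing to the ``if'' direction), Lemma \ref{spor} (sporadic and ${}^2F_4(2)'$) and Lemma \ref{alt} (alternating), the proof of Theorem \ref{simple} is a short wrap-up: if $G$ does not have a self-normalizing Sylow $p$-subgroup, use Lemma \ref{red}(b) to push the $\Aut(S)$-invariant rational principal-block character of $S$ up to a rational (hence $p$-rational) character of $G$ in $B_0(G)$, which is nontrivial of $p'$-degree since its restriction to $S$ is; conversely the self-normalizing case is Lemma \ref{red}(c). Thus the entire weight of the argument rests on Theorem \ref{cross}, and within it on the case-by-case exceptional-group bookkeeping described above.
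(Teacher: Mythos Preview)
Your overall plan (case analysis via $d$-Harish-Chandra theory, with $d$ the order of $q$ modulo $p$, and the principal block described in terms of $d$-cores of labelling data) matches the paper's approach. But there is one concrete error and one missed simplification that together make your sketch much more laborious than it needs to be.

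\textbf{The error.} You write that the Steinberg character ``has $p$-divisible degree.'' This is false in the setting of Theorem~\ref{cross}: the Steinberg character has degree a power of the defining characteristic $r$, and the hypothesis is precisely $p \neq r$. So $\St$ has $p'$-degree, is rational-valued, and is $\Aut(S)$-invariant. The only question is whether it lies in $B_0(S)$.

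\textbf{The missed simplification.} The paper's key observation is that $\St \in B_0(S)$ whenever $d$ is a \emph{regular} number for the Weyl group $W$ (equivalently, whenever the centralizer of a Sylow $d$-torus is a maximal torus): in that case there is a unique unipotent block of maximal defect, namely the principal one, and it contains $\St$. This single stroke disposes of all the cases you flag as ``genuinely delicate'': every relevant $d$ is regular for the Suzuki and Ree groups, for $B_2$, for $D_4$, and indeed for every exceptional type except $E_7$. So the case-by-case bookkeeping you anticipate collapses to just one exceptional type, $E_7$ with $d \in \{4,5,8,10,12\}$, handled by direct lookup in \cite{BMM}. For classical types the paper does essentially what you propose (a nontrivial linear character of the relative Weyl group gives a principal-block unipotent character of degree $\equiv 1 \pmod p$), with an extra argument for type $D_n$ to ensure $\Aut(S)$-invariance via non-degeneracy of symbols --- but even there, whenever the chosen character fails this, $d$ turns out to be regular and $\St$ works again.

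Your Step~3 wrap-up of Theorem~\ref{simple} is correct and matches the paper, though note that the paper also needs Theorem~\ref{defi} for the defining-characteristic case, and a separate short argument for $(\PSL_2(3^{2a+1}),3)$ where no $\Aut(S)$-invariant character exists.
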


\begin{proof}
(i) We work in the following setting. Let $\bG$ be a simple, simply connected
linear algebraic group over an algebraic closure of $\FF_r$ with a Steinberg
map $F:\bG\rightarrow\bG$ such that $S=G/\bZ(G)$ where $G=\bG^F$. (This is
possible since $S \not\cong\tw2F_4(2)'$.) The unipotent characters of $S$ are
then (by definition) precisely the unipotent characters of $G$ (which all have
$\bZ(G)$ in their kernel). 

\smallskip
(ii) We first assume that $F$ is a Frobenius endomorphism defining an
$\FF_q$-rational structure on $\bG$ (i.e., $G$ is not a Suzuki or Ree group).
We let $d$ denote the order of $q$ modulo~$p$. By results mainly of
Brou\'e, Malle, and Michel, and of Cabanes and Enguehard, summarised in
\cite[Theorem A]{KM16}, the unipotent characters in a block of $G$ are unions
of $d$-Harish-Chandra series. Moreover, individual $d$-Harish-Chandra series
are in bijection with irreducible characters of the corresponding relative
Weyl groups (see \cite[Theorem~B]{KM16}). Thus by the degree formula for Lusztig
induction, the blocks of maximal defect are those parametrized by cuspidal
pairs $(L,\lambda)$ with $d$-cuspidal $\lambda \in \Irr(L)$  of degree coprime to~$p$, hence
with the $d$-split Levi subgroup $L$ having a $d$-torus in its centre, so
with $L$ being the centralizer $\bC_G(\bT)$ of a Sylow $d$-torus $\bT$ of $\bG$.
In particular if $\bC_\bG(\bT)$ is a maximal torus of $\bG$, that is, 
if $d$ is a regular number (in the sense of Springer) for the Weyl group $W$ of $G$,
then there is just one
such block, which must be the principal block. In this case, the Steinberg
character lies in the principal block, is rational and $\Aut(G)$-invariant
(see e.g.\ \cite[Theorem 2.5]{MaExt}), and its degree is a power of $r$, hence
coprime to~$p$, so we are done.

\smallskip
Consider the case $G$ is of exceptional type. Then all relevant numbers are
regular for $W$ unless $G$ is of type $E_7$ (see the tables given in
\cite{BMM}). Hence we only have to consider the
latter type. The non-regular numbers are $d=4,5,8,10,12$. Here, eight unipotent
characters are irrational (those lying in the Harish-Chandra series above the
two cuspidal unipotent characters of $E_6$, those two in the principal series
belonging to the non-rational characters of the Hecke algebra, and the two
cuspidal unipotent characters). It is immediate from the explicit list of
$d$-Harish-Chandra series in \cite[Tab.~2]{BMM} that in each case there exists
a unipotent character of $p'$-degree in the principal block that is $\Aut(S)$-invariant.
(This concerns the lines~24, 30, 34, 37 in loc.~cit.)

\smallskip
Now assume that $S$, and hence $\bG$, is of classical type.
Then the unipotent characters are uniquely determined by their multiplicities
in the Deligne--Lusztig characters and hence in particular they are rational.
Moreover, all unipotent characters are invariant under all outer automorphisms
of $S$ unless either $G$ is of type $D_n$ with $n\ge4$, or $G$ is of type
$B_2$ in characteristic $r=2$, see \cite[Theorem~2.5]{MaExt}. Since the relative
Weyl group of any non-trivial $d$-torus is a non-trivial complex reflection
group, it has a non-trivial linear character $\psi$. The unipotent character
in the principal block parametrized by $\psi$ then has degree congruent
to~1 modulo~$p$ by \cite[Theorem~4.2]{MaH0} and is not the trivial character, and
hence we are done except for types $D_n$ and $B_2$. In the cases of types
$D_4$ and $B_2$, again all relevant $d$ are regular for $W$, and thus the
Steinberg character does the job. So now assume that $G$ is of type $D_n$ with
$n\ge5$. According to \cite[Theorem 2.5]{MaExt} the unipotent characters
not stable by outer automorphisms are those labelled by degenerate symbols.
On the other hand, the unipotent characters in the principal block are
those labelled by symbols with $d$-core (respectively $e$-cocore if $d=2e$ is
even) being the symbol of the trivial character (see \cite[\S3A]{BMM}). Clearly the
$d$-core (respectively $e$-cocore) of a degenerate symbol is again degenerate,
and the symbol for the trivial character is only degenerate when $n=0$. But
in this case, $n$ is divisible by $d$ (respectively by $e$) and then $d$ is a
regular number for $W$, whence we conclude as before.

\smallskip
(iii) Finally we deal with the case of Suzuki and Ree groups. The theory of
$d$-Harish-Chandra series and $p$-blocks holds with minor modifications in this
case as well, see \cite{BMM} and \cite{MaH0}. And again all numbers $d$ are
regular for the corresponding Weyl groups, whence the Steinberg character has the
desired properties.
\end{proof}

\begin{thm}   \label{defi}
Let $S$ be a finite simple group of Lie type defined over a field of characteristic $p > 2$. If $p = 3$, 
assume in addition that $S \not\cong \PSL_2(3^{2a+1})$ for any $a \in \NN$. Then $S$  has a non-trivial, 
rational-valued, $\Aut(S)$-invariant, irreducible character of $p'$-degree that belongs to the principal block of $S$. 
\end{thm}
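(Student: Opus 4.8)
The plan is to reduce, through the block theory of defining characteristic, to exhibiting one suitable semisimple character, and then to run a short case analysis over the types of $S$.

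First I would put $S$ into the standard shape $S=G/\bZ(G)$ with $G=\bG^F$, where $\bG$ is simple and simply connected over an algebraic closure of $\FF_p$ and $F$ is a Steinberg endomorphism; this is possible for every finite simple group of Lie type of characteristic $p$ other than $\tw2 F_4(2)'$, which is of characteristic $2$ and hence irrelevant. Because $\bG$ is simply connected in characteristic $p$, its centre $\bZ(\bG)$, and with it $\bZ(G)=\bZ(\bG)^F$, is a $p'$-group; thus $\Irr(S)=\{\chi\in\Irr(G):\bZ(G)\le\ker\chi\}$, and passing between $G$ and $S$ respects principal blocks.

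The main structural input is the description of $p$-blocks in defining characteristic: by a theorem of Humphreys, a finite group with a split $BN$-pair of characteristic $p$ has exactly two $p$-blocks, the principal block and the defect-zero block containing the Steinberg character (whose degree is a power of $p$). In particular $B_0(S)$ is the only $p$-block of $S$ of nonzero defect, and since every irreducible character of $p'$-degree has full defect, every such character of $S$ automatically lies in $B_0(S)$. So it is enough to produce one nontrivial, rational-valued, $\Aut(S)$-invariant irreducible character of $S$ of degree prime to~$p$. As candidates I would use semisimple characters: writing $G^*=\bG^{*F}$ for the dual group (which is adjoint, so that centralisers of semisimple elements are connected), each nontrivial semisimple class $(s)$ of $G^*$ yields a nontrivial $\chi_s\in\Irr(G)$ of degree $[G^*:\cent{G^*}{s}]_{p'}$, automatically coprime to~$p$. (By Lusztig's Jordan decomposition these, together with $1_G$, exhaust the $p'$-degree irreducible characters of $G$, since the trivial character is the only unipotent character of $p'$-degree; but only existence is needed below.)

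It remains to choose the class $(s)$, $s\ne1$, so that: (a) $s$ is a rational element of $G^*$, i.e.\ $G^*$-conjugate to all its powers $s^k$ with $\gcd(k,|s|)=1$, which forces $\chi_s$ to be rational-valued; and (b) $s$ is stable under the relevant field and graph automorphisms and lifts to the simply connected cover of $\bG^*$, which together force $\chi_s$ to be $\Aut(S)$-invariant and to descend to $S$. For the bulk of the types I would take $s$ of small order --- an involution, or an element of order $3$ or $4$ --- with a large, standard centraliser, where (a) and (b) are transparent; the low-rank groups, in particular $\PSL_2(q)$ and the Ree groups $\tw2 G_2(q)$, I would handle by direct inspection of their character tables. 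The main obstacle is exactly (a) together with (b): forcing a nontrivial semisimple class that is \emph{simultaneously} rational and automorphism-stable of the right parity. This is precisely what fails for $S\cong\PSL_2(3^{2a+1})$: there $q=3^{2a+1}\equiv3\pmod4$ and $3\nmid q\pm1$, and one checks from the character table of $\PSL_2(q)$ that then \emph{none} of its nontrivial characters of $p'$-degree (those of degrees $(q\pm1)/2$, $q-1$ and $q+1$) is rational-valued --- which is exactly why that family must be excluded from the statement. (This theorem complements Theorem~\ref{cross}, which handles the primes different from the defining characteristic via unipotent characters.)
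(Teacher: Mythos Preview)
Your plan is essentially the paper's own proof: the same reduction via Humphreys' theorem that $B_0(S)=\Irr(S)\setminus\{\St\}$ in defining characteristic, followed by the construction of a suitable semisimple character $\chi_s$ with $s$ a rational element of small order in $[G^*,G^*]$, and the same diagnosis of why $\PSL_2(3^{2a+1})$ falls outside the method.

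The only substantive differences are in bookkeeping, not in ideas. For the exceptional types and for $\PSL_n$, $\PSU_n$ with $n\ge3$, the paper simply cites \cite{NT2} (Example~5.3, Propositions~5.5 and~5.10) rather than redoing the case analysis you sketch; for the remaining classical types $\Sp$ and $\mathrm{Spin}$ it is slightly more explicit than your outline, taking $s$ of order~$3$ (when $p\ne3$) or order~$5$ (when $p=3$ and the dimension is at least~$4$) acting nontrivially on a subspace of dimension~$2$, respectively~$4$, and for $\PSp_2(3^{2a})$ an element of order~$4$ coming from an eighth root of unity in $\FF_q^\times$. Pinning the order and the fixed-point codimension down like this makes the uniqueness of the class (hence rationality and $\Aut(S)$-invariance) and the membership $s\in[G^*,G^*]$ immediate, whereas your looser ``involution or order $3$ or $4$'' would need a line checking spinor norms or determinant conditions in some cases. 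But the architecture is identical.
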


\begin{proof}
We keep the notation $(\bG,F,G)$ as in Step (i) of the proof of Theorem \ref{cross}.
According to \cite[Theorem, p.69]{Hum}, $B_0(S) = \Irr(S) \setminus \{\St\}$, if $\St$ denotes the Steinberg character of $G$ 
(and $S$). In particular, any irreducible character of $p'$-degree of $S$ belongs to $B_0(S)$.

First we note that the result in the case $S$ is an exceptional group of Lie type, respectively $S = \PSL_n(q)$ or ${\mathrm {PSU}}_n(q)$ with 
$n \geq 3$, has already been established in Example 5.3(a), (c), Proposition 5.5, and Proposition 5.10 of \cite{NT2}, respectively.  

In the remaining cases (and viewing $\SL_2(q)$ as $\Sp_2(q)$), we have that $G = \Sp(V)$ or ${\mathrm {Spin}}(V)$ for a suitable vector 
space $V$ over $\FF_q$. Let the pair $(\bG^*,F^*)$ be dual to $(\bG,F)$, and set $G^* := (\bG^*)^{F^*}$, so that $G^* = {\mathrm {SO}}(W)$,
${\mathrm {PCSp}}(W)$, or  ${\mathrm {PCO}}(W)^0$, where $W = \FF_q^n$ for a suitable $n \in \NN$. 
If $p \neq 3$, it is easy to see that $G^*$ has a unique conjugacy class 
of rational elements $s \in [G^*,G^*]$ of order $3$ such that an inverse image in ${\mathrm {GL}}(W)$ of order $3$ of $s$ has a fixed point subspace of dimension $n-2$
on $W$. Likewise, if $p = 3$ and $n \geq 4$, then $G^*$ has a unique conjugacy class 
of rational elements $s \in [G^*,G^*]$ of order $5$ such that an inverse image in ${\mathrm {GL}}(W)$ of order $5$ of $s$ has a fixed point subspace of dimension $n-4$
on $W$. Finally, if $S = {\mathrm {PSp}}_2(3^{2a})$ (and so $G^* = {\mathrm {SO}}_3(q)$ with $q = 3^{2a} \equiv 1 (\bmod 8)$), we can choose $\gamma \in \FF_q^\times$ of
order $8$, $t = \diag(1,\gamma,\gamma^{-1}) \in G^*$, and $s = t^2 \in [G^*,G^*]$. In all cases, $s$ has connected centralizer in $\bG^*$. It follows that the 
corresponding semisimple character $\chi_s$ of $G$ is irreducible, trivial at $\bZ(G)$, rational-valued, of degree 
$|G^*:\bC_{G^*}(s)|_{p'} > 1$, and $\Aut(S)$-invariant.
\end{proof}

\begin{proof}[Proof of Theorem \ref{simple}]
By Lemma \ref{red}(c), it remains to prove the ``if'' direction of the theorem. By Lemmas \ref{spor} and \ref{alt} we may assume 
that $S \not\cong \tw2 F_4(2)'$ is a simple group of Lie type. 
If $(S,p) \neq (\PSL_2(3^{2a+1}),3)$, then Theorems \ref{cross} and \ref{defi}
yield a non-trivial $\Aut(S)$-invariant rational irreducible character of $p'$-degree in $B_0(S)$, whence the same holds for $B_0(G)$ by   
Lemma \ref{red}(b). Assume now that  $(S,p) = (\PSL_2(3^{2a+1}),3)$ and that $P \in \Syl_p(G)$ is not self-normalizing. 
By direct computation (or by using \cite[Theorem A]{NTT07}), one sees that $\Irr(G)$ contains a non-trivial $p$-rational 
irreducible character of $p'$-degree $\chi$. Since $|G/S|$ is a $p$-power, $\chi_S$ is irreducible and belongs to $B_0(S)$ 
as we noted in the proof of Theorem \ref{defi}, whence $\chi \in B_0(G)$ by Lemma \ref{red}(a). 
\end{proof}

\section{Theorem C and final remarks}

We start this section by proving Theorem C of the introduction,
which is implied by the
deepest parts of the block theory of $p$-solvable groups.
We assume that the reader is   familiar
with the theory of blocks and normal subgroups (see, for instance,
Chapter 9 of \cite{Nav98}).

Recall that
if $B$ is a block of $G$ with defect group $P$, then
$B$ uniquely determines, up to $\norm GP$-conjugacy, a
  $p$-defect zero character $\theta \in \irr{P\cent GP/P}$ lying in a block
$b$ of $P\cent GP$ that induces $B$  (see discussion after Theorem 9.12 in \cite{Nav98}).
This character $\theta$ is called a canonical character of $B$. We first need to prove the following lemma.  

\begin{lem}\label{easy}
Suppose that $B$ is a block of
$G$ with normal defect group $P$. Let $\theta \in \irr{P\cent GP /P}$
be a canonical character of $B$.  Then:
\begin{enumerate}[\rm(a)]
\item
All irreducible Brauer characters in $B$ have height zero.

\item
 $l(B)=1$ if and only if
$\theta$ is fully ramified in $G_\theta/P\cent GP$.

\item

Suppose that $p$ is odd. Then $l(B)=1$ if and only if
there is a unique $p$-rational   $\chi \in \irr B$
such that $\chi^0 \in \ibr G$.

\end{enumerate}
\end{lem}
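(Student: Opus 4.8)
\medskip
\noindent\emph{Sketch of a proof.}\ The plan is to treat all three parts after one reduction. Applying the Fong--Reynolds correspondence to the normal subgroup $P\cent GP\nor G$ and the block $b$, and using that the resulting bijection is realised by Clifford induction --- hence is ${\mathcal G}$-equivariant and preserves heights, the sets $\ibr{}$, the property ``$\chi^0$ is irreducible'', and $l(B)$, while the full-ramification statement in (b) refers only to $G_\theta$ and $P\cent GP$, which are not disturbed by the reduction --- we may and do assume that $\theta$ is $G$-invariant. Then $E:=G/P\cent GP$ is the inertial quotient of $B$, hence a $p'$-group; $b$ dominates the defect-zero block of $P\cent GP/P$ afforded by $\theta$, so $\ibr b=\{\varphi_0\}$ with $\varphi_0=\tilde\theta^0$ of degree $\theta(1)$, where $\tilde\theta\in\irr{P\cent GP}$ inflates $\theta$; and $b$ is in fact nilpotent (its defect group $P$ is normal in $P\cent GP$ with trivial inertial quotient), so $\irr b$ is naturally, $G$-equivariantly parametrised by $\irr P$, with $\tilde\theta\leftrightarrow 1_P$ (this last fact is needed only in (c)). Note $\theta(1)_p=|P\cent GP|_p/|P|=|G|_p/|P|$, the last equality since $|E|$ is prime to $p$.

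Part (a): every $\varphi\in\ibr B$ lies over $\varphi_0$, so by Clifford theory over the $p'$-group $E$ we have $\varphi(1)=\psi(1)\varphi_0(1)$ for some irreducible projective character $\psi$ of $E$; as $\psi(1)\mid|E|$ is prime to $p$, we get $\varphi(1)_p=\varphi_0(1)_p=|G|_p/|P|$, i.e.\ $\varphi$ has height zero. Part (b): the same theory identifies $\ibr B$ with the simple modules of a twisted group algebra $k_\alpha E$; since $E$ is a $p'$-group their number equals that of the simple modules of $\CC_{\hat\alpha}E$ for a complex lift $\hat\alpha$ of $\alpha$, which by ordinary Clifford theory for the $G$-invariant $\tilde\theta$ equals $|\irr{G|\tilde\theta}|$. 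Hence $l(B)=1$ if and only if $\tilde\theta^G$ has a unique irreducible constituent, i.e.\ if and only if $\theta$ is fully ramified in $G_\theta/P\cent GP$.

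Part (c): first, $\theta$ is $p$-rational, being a character of $p$-defect zero (its values at $p$-regular elements are sums of $p'$-roots of unity, and it vanishes at $p$-singular elements). I claim it is enough to prove that $\chi\mapsto\chi^0$ is a bijection from $\{\chi\in\irr B:\chi\ p\text{-rational},\ \chi^0\in\ibr G\}$ onto $\ibr B$; granting it, this set has $l(B)$ elements and (c) follows at once. Concretely, when $l(B)=1$ one obtains a member of the set as follows: by (b), $\theta$ is fully ramified in $E$, so $\irr{G|\tilde\theta}=\{\chi_0\}$; then $\Q(\chi_0)=\Q(\tilde\theta)=\Q(\theta)$ (as $\chi_0$ is the unique irreducible character over $\tilde\theta$), so $\chi_0$ is $p$-rational, and $\chi_0(1)=\sqrt{|E|}\,\theta(1)$ equals the degree of the unique $\varphi\in\ibr B$, forcing $\chi_0^0=\varphi\in\ibr G$.

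The surjectivity and injectivity of $\chi\mapsto\chi^0$ in general is the step I expect to be the main obstacle; it comes down to showing that any $p$-rational $\chi\in\irr B$ with $\chi^0\in\ibr G$ must lie over $\tilde\theta$. Half of this is a degree count: by (a), $\chi(1)_p=|G|_p/|P|=\theta(1)_p$, so if $\lambda\in\irr b$ lies under $\chi$ then, since $\lambda(1)=\theta(1)\mu_\lambda(1)$ for the corresponding $\mu_\lambda\in\irr P$, the $p$-part of the Clifford degree formula forces $\mu_\lambda(1)$ prime to $p$, i.e.\ $\mu_\lambda$ linear. Promoting ``$\mu_\lambda$ linear'' to $\mu_\lambda=1_P$ (equivalently $\lambda=\tilde\theta$) is where $p$ being odd is essential --- a $p$-rational irreducible character of a $p$-group is trivial --- and it requires transporting $p$-rationality from $\chi$ down to $\mu_\lambda$ and excluding the sole alternative, that $\mu_\lambda$ has order $p$ with $E$ transitive on $\langle\mu_\lambda\rangle\setminus\{1_P\}$. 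I would carry this out using the Külshammer--Puig description of $B$ as Morita equivalent to a twisted group algebra $k_{\hat\alpha}[P\rtimes E]$ with $E$ a $p'$-group, together with the embedding $E\hookrightarrow\Out(P)$ and the compatibility of the Galois action with the $E$-action on $\irr P$. Once this is in place, surjectivity of $\chi\mapsto\chi^0$ follows by applying the $l(B)=1$ argument inside each Clifford datum over $\tilde\theta$, and injectivity from the fact that distinct members of $\irr{G|\tilde\theta}$ have distinct $p$-modular reductions.
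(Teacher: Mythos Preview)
Your reduction via Fong--Reynolds and your arguments for (a) and (b) are essentially the paper's: the paper works directly with the bijection $\irr{T|\theta}\to\ibr{T|\theta^0}$ given by restriction to $p$-regular elements (each $\psi\in\irr{T|\theta}$ has $P$ in its kernel and has defect zero as a character of $T/P$), rather than invoking twisted group algebras, but the content is identical.

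For (c) you have located the crux correctly---showing that any $p$-rational $\chi\in\irr B$ with $\chi^0\in\ibr G$ necessarily lies over $\tilde\theta$---but the route you propose through K\"ulshammer--Puig is both unnecessary and left incomplete, and it is not clear it would work as stated: a Morita equivalence does not automatically respect fields of character values, so ``transporting $p$-rationality down to $\mu_\lambda$'' via that equivalence would require extra justification. The paper bypasses all of this with a single citation: by Lemma~X.2.4 of Feit's book, if $p$ is odd, $\chi\in\irr G$ is $p$-rational, and $\chi^0\in\ibr G$, then every normal $p$-subgroup of $G$ lies in $\ker\chi$; in particular $P\leq\ker\chi$. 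Once $P\leq\ker\chi$, every irreducible constituent of $\chi_L$ (with $L=P\cent GP$) has $P$ in its kernel, and by Theorem~9.12 of \cite{Nav98} the only such character in the block $b$ of $L$ is the canonical character $\theta$ itself; hence $\chi\in\irr{G|\theta}$. Conversely every $\chi\in\irr{G|\theta}$ is induced from a defect-zero character of $T/P$, so is $p$-rational with $\chi^0\in\ibr G$. Thus the set in question is exactly $\irr{G|\theta}$, and the chain
\[
|\irr{G|\theta}|=|\irr{T|\theta}|=|\ibr{T|\theta^0}|=|\ibr{G|\theta^0}|=l(B)
\]
finishes (c). Your degree count, the analysis of linear $\mu_\lambda$, and the appeal to the nilpotent-block parametrisation are all rendered superfluous by the Feit lemma.
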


\begin{proof}
Let $C=\cent GP \nor G$ and  $L=PC \nor G$. Let $b$ be a block
of $L$ covered by $B$. We know that $B=b^G$
by Corollary 9.21 of \cite{Nav98}, and that $b$ has defect group $P$
(for instance, by Lemma 4.13 and Theorem 4.18
of \cite{Nav98}). By Theorem 9.12 of \cite{Nav98},
we may assume that $\ibr b=\{\theta^0\}$. 
By using Theorem 9.2 of \cite{Nav98},
we  conclude that $\ibr{B}=\ibr{G|\theta^0}$.
Let $T$ be the stabilizer of the Brauer character $\theta^0$ in $G$.
Since $\theta$ vanishes off $p$-regular elements,
we also have that $T=G_\theta$. 
We even have that $T$ is the stabilizer of $b$ in $B$,
by using Theorem 9.12 of \cite{Nav98}.
Recall that $T/L$ is a
$p'$-group, by Theorem 9.22 of \cite{Nav98}.
By the Fong-Reynolds correspondence (Theorem 9.14 of \cite{Nav98})
it is enough to prove that the irreducible
Brauer characters of $T$ lying over $\theta^0$ have height
zero. This is clear, using that $T/L$
is a $p'$-group. This proves part (a).

Also, we see that
 $|\ibr{B}|=1$ if  and only if  $|\ibr{G|\theta^0}|=1$.
By the Clifford correspondence
for Brauer characters (Theorem 8.9 of \cite{Nav98}), this happens if  and only if  $|\ibr{T|\theta^0}|=1$.
  Since $T/L$ is a $p'$-group, then
  every $\psi \in \irr{T|\theta}$ has $p$-defect zero,
and it  follows that restriction to $p$-regular
elements defines a bijection $\irr{T|\theta} \rightarrow \irr{T|\theta^0}$.
We deduce that $\theta$ is fully ramified in $T$, hence proving (b).

In order to prove (c),
we claim first that 
$\irr{G|\theta}$ is exactly the set
of $p$-rational characters in $B$ that lift irreducible
Brauer characters.
We already know that $B=b^G$ is the only block
of $G$ covering $b$, so $\irr{G|\theta} \sbs \irr B$. 
Let $\chi \in \irr{G|\theta}$ and let $\psi \in \irr{T|\theta}$
be its Clifford correspondent. Since $T/L$ is a $p'$-group,
then $\psi$ has defect zero.  In particular
$\psi^0 \in \ibr T$ and $\psi$ is $p$-rational.
Hence $\chi=\psi^G$ is also $p$-rational. By the Clifford correspondence for Brauer characters (Theorem 8.9 of \cite{Nav98}),
 we have that $\chi^0=(\psi^0)^G \in \ibr G$.
Conversely, suppose that 
$\chi \in \irr B$ is a $p$-rational
character that lifts an irreducible Brauer character. By Lemma X.2.4 of \cite{F},
we have that $P \leq \ker \chi$. By Theorem 9.12 of \cite{Nav98},
it follows that $\chi$ lies over $\theta$. This proves the claim.

 We have that 
$$|\irr{G|\theta}|=|\irr{T|\theta}|=|\irr{T |\theta^0}|=|\ibr{G|\theta^0}|=
|\ibr{B|\theta^0}| $$
and the proof of the lemma  follows.
\end{proof}

\medskip
Next is Theorem C of the introduction.

\begin{thm}
Suppose that $G$ is $p$-solvable, with $p$ odd. Let $B$ be a block
with defect group $P$ and let $b$ be its Brauer first main correspondent.
Then $l(b)=1$ if and only if there is exactly one $p$-rational $\chi \in \irr B$
of height zero and such that $\chi^0 \in \ibr B$.
\end{thm}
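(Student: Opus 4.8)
The plan is to reduce, via the standard machinery of $p$-solvable block theory, to the case of a normal defect group, and then invoke Lemma~\ref{easy}. Write $N=\bfN_G(P)$, so that $P\nor N$ and $b=B^N$ by Brauer's First Main Theorem; since $P\bfC_G(P)\le N\le G$, the blocks $B$ and $b$ have a common canonical character $\theta\in\irr{P\bfC_G(P)/P}$. Applying Lemma~\ref{easy}(c) to $b$ inside $N$, we see that $l(b)=1$ if and only if $\irr b$ contains a unique $p$-rational $\psi$ with $\psi^0\in\ibr N$; by Lemma~\ref{easy}(a) any such $\psi$ has height zero, and by Lemma~\ref{easy}(b) the condition $l(b)=1$ is also equivalent to $\theta$ being fully ramified in $N_\theta/P\bfC_G(P)$. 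It therefore suffices to construct a bijection between the set of $p$-rational height-zero characters $\chi\in\irr B$ with $\chi^0\in\ibr B$ and the set of $p$-rational characters $\psi\in\irr b$ with $\psi^0\in\ibr N$; the former set then has size $1$ exactly when $l(b)=1$.

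First I would carry out Fong's first reduction, passing to $\bar G=G/\oh{p'}G$: this replaces $B$ by a block $\bar B$ with $\oh{p'}G$ in its kernel, with defect group isomorphic to $P$ and Brauer correspondent (in $\bfN_{\bar G}(\bar P)$) the image of $b$. Since $\oh{p'}G$ is a $p'$-group, it acts trivially on the $p$-regular classes and fixes the $p$-power roots of unity, so heights, $p$-rationality, the relation $\chi^0\in\ibr(\cdot)$, and the number $l(b)$ are unaffected; hence we may assume $\oh{p'}G=1$. Next, inducting on $|G|$ and applying the Fong--Reynolds correspondence to a block of a normal subgroup covered by $B$, I would reduce to $B$ quasiprimitive and ultimately to $P\nor G$; the relevant point is that Fong--Reynolds is implemented by Clifford induction and is compatible with the Brauer correspondence, so it preserves $p$-rationality, the irreducibility of $\chi^0$, and heights, and matches the two sets above for $G$ with those for the Fong--Reynolds correspondent. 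In the terminal case $P\nor G$ one has $N=G$ and $b=B$, so the asserted equivalence is exactly Lemma~\ref{easy}(c): the height-zero qualifier is automatic there, because $\chi^0\in\ibr B$ forces $\chi(1)_p=\chi^0(1)_p=|G|_p/|P|$ by Lemma~\ref{easy}(a).

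The main obstacle is the Fong--Reynolds step. That this correspondence matches defect groups and Brauer correspondents is classical (Chapter~9 of \cite{Nav98}), but here one must simultaneously match, on the $G$-side and on the $\bfN_G(P)$-side, the refined data ``$p$-rational, lifts an irreducible Brauer character, of height zero'', and keep track of the canonical character $\theta$ throughout. Concretely this requires that the Fong--Reynolds bijection intertwine restriction to $p$-regular elements and the action of the absolute Galois group, both for $G$ and for its relevant local subgroups --- an instance of the principle that ``nothing bad happens above a $p'$-group'', but one that has to be verified in tandem with the passage to $\bfN_G(P)$. Once the reduction to $P\nor G$ is complete, Lemma~\ref{easy} finishes the proof.
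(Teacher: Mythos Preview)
Your reduction strategy has a genuine gap, and the paper takes a much shorter and quite different route.

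Your description of Fong's first reduction is not correct: a block $B$ of a $p$-solvable group $G$ need not have $\oh{p'}G$ in its kernel, so one cannot simply pass to $\bar G=G/\oh{p'}G$. The actual first step is Fong--Reynolds over $K:=\oh{p'}G$, which replaces $G$ by the stabilizer $G_\theta$ of some $\theta\in\irr K$ lying under $B$; only after a character-triple argument may one assume $K$ central, and already this step interferes with naive $p$-rationality bookkeeping. More seriously, even after all of Fong's reductions one lands at a block of \emph{maximal defect} in a group with $\oh{p'}=1$, not at a block with normal defect group. For $p$-solvable $G$ with $\oh{p'}G=1$ one has $\oh pG\ne1$, but there is no reason for a Sylow $p$-subgroup to be normal (e.g.\ $G=(C_3\times C_3)\rtimes\SL_2(3)$ with the natural action, $p=3$). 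So the chain ``Fong--Reynolds $\Rightarrow$ quasiprimitive $\Rightarrow P\nor G$'' simply does not terminate where you need it to, and Lemma~\ref{easy}(c) never becomes applicable to $B$ itself.

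The paper instead invokes two deep results for $p$-solvable groups as black boxes. Theorem~23.9 of \cite{MW} gives $|{\rm IBr}_0(B)|=|{\rm IBr}_0(b)|$, and since $P\nor\bfN_G(P)$, Lemma~\ref{easy}(a) applied to $b$ yields ${\rm IBr}_0(b)=\ibr b$; hence $l(b)=|{\rm IBr}_0(B)|$. Then Theorem~10.6 of \cite{Nav98} (the unique $p$-rational lift for $p$-solvable groups) assigns to each $\varphi\in\ibr B$ a unique $p$-rational $\chi\in\irr G$ with $\chi^0=\varphi$; since $\chi(1)=\varphi(1)$, this restricts to a bijection from ${\rm IBr}_0(B)$ onto the set of $p$-rational height-zero $\chi\in\irr B$ with $\chi^0\in\ibr B$. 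Thus $l(b)$ equals the cardinality of that set, and the theorem follows in two lines. The bijection you set out to build by hand is exactly the composite of these two cited theorems; reproving them from scratch via Clifford-theoretic descent is essentially the content of \cite{MW} and of the Isaacs theory behind \cite[Theorem~10.6]{Nav98}, and neither argument factors through the case $P\nor G$.
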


\begin{proof} 
Let ${\rm IBr}_0(B)$ be the set of irreducible Brauer
characters of $B$ with height zero.
By Theorem 23.9 of \cite{MW}, we know that 
$|{\rm IBr}_0(B)|=|{\rm IBr}_0(b)|$.
By Lemma \ref{easy}(b),
we have that
$|{\rm IBr}_0(B)|=|{\rm IBr}(b)|$.
Hence 
$|{\rm IBr}(b)|=1$ if and only if 
$|{\rm IBr}_0(B)|=1$.

By Theorem 10.6 of \cite{Nav98},
for each $\phi \in \ibr B$ there exists a unique
$p$-rational character $\chi \in \irr G$ such that $\chi^0=\phi$.
Hence $|{\rm IBr}_0(B)|$ is the number of $p$-rational
characters in $B$ of height zero. This concludes the proof of the statement. \end{proof}

\medskip
It is interesting to speculate up to
what level the local condition $l(b)=1$ affects the representation theory
of its global Brauer correspondent $B$.
As we have proved in this section, this condition implies that
$B$ has a unique height zero $p$-rational character $\chi$
lifting an irreducible Brauer character for $p$-solvable groups,
and for blocks with a normal defect groups. It seems that
this might be also the case for blocks with abelian defect groups.
This would follow from the Alperin weight conjecture
together with a  conjecture by G. R. Robinson on the
uniqueness of $p$-rational liftings in blocks
with a unique simple module (see \cite{MNS}).

\begin{rem}
Let $p > 2$ be a prime and let $\calO$ be the (unique up to isomorphism) 
absolutely unramified complete discrete valuation ring with $\overline\FF_p$ as its residue field.
Let $G$ be any finite group and $B$ a $p$-block of $\calO G$. Suppose that $B$ is Morita 
equivalent to a $p$-block $B'$ of $\calO H$, where $H$ is a finite $p$-solvable group, and 
suppose that the Brauer correspondent $b'$ of $B'$ satisfies $l(b') = 1$. Applying Theorem C
to $B'$ and the main result of \cite{K}, see also \cite[Corollary 1.7]{KL}, we see that there is exactly one $p$-rational $\chi \in \irr B$
of height zero and such that $\chi^0 \in \ibr B$.
\end{rem}

\medskip
We have mentioned in the introduction that 
we believe that
there might be a version of Theorem A for the prime
$p=2$. We finish this paper with the following conjecture
and some remarks on it. 

\medskip

\begin{conj}\label{Conj2}
Let $G$ be a finite group. Let $P \in \syl 2 G$.
 Then $\norm GP$ has a normal $2$-complement if and only if all
odd-degree irreducible characters in the principal $2$-block of $G$ are
$\sigma$-invariant, where $\sigma$ is the Galois automorphism that fixes
$2$-power roots of unity and squares $2'$-roots of unity.
\end{conj}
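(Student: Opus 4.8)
\begin{rem}
The plan to prove Conjecture \ref{Conj2} is to imitate the proof of Theorem \ref{main}, replacing ``$p$-rational'' by ``$\sigma$-invariant'' and tracking, throughout the reductions, the existence of a \emph{$\sigma$-moved} odd-degree character in the principal block rather than that of a nontrivial $p$-rational one. Two arithmetic facts drive the $p=2$ bookkeeping: every irreducible character of a $2$-group takes $2$-power-order values, hence is $\sigma$-fixed, which is the exact counterpart of the fact (used in Theorem \ref{main}) that a nontrivial linear character of an odd-order $p$-group is not $p$-rational; and, dually, every nontrivial $2'$-group is solvable by Feit--Thompson and so has a nontrivial linear character $\lambda$, which is $\sigma$-moved because $\lambda^\sigma=\lambda^2\ne\lambda$. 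For the ``only if'' direction one would combine the ($p=2$, Galois-equivariant) principal-block analogue of Theorem A of \cite{NTV14} --- a Sylow-normalizer, principal-block instance of the McKay--Navarro conjecture --- with the first fact: if $\norm GP=PX$ with $X\nor\norm GP$ a $2'$-group, the bijection sends an odd-degree $\chi\in\irr{B_0(G)}$ to an irreducible character $\chi^*$ of the $2$-group $\norm GP/X\cong P$, which is $\sigma$-fixed, and therefore so is $\chi$.

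For the ``if'' direction one argues by induction on $|G|$, running through the seven steps of the proof of Theorem \ref{main}. Step~1 (reduction to a unique minimal normal subgroup $N$) uses the $p=2$ form of Lemma \ref{normalizers} and the inclusion $\irr{B_0(G/N)}\subseteq\irr{B_0(G)}$, which preserves $\sigma$-invariance under inflation. Steps~2 and~3 then show that $N$ is neither a $2'$-group (otherwise $\bfO_{2'}(\norm GP)$ supplies the normal $2$-complement) nor a $2$-group: in the latter case, when $N=\bfO_2(G)$ is self-centralizing in the relevant $2$-solvable $K\nor G$, the block $B_0(G)$ is the unique one of maximal defect (Problem 4.8 of \cite{Nav98}), a nontrivial $\gamma\in{\rm Irr}_P(K/N)$ exists iff $V/N=\cent{K/N}{P/N}\ne1$, and one may take $\gamma$ $\sigma$-moved as the Glauberman preimage of a nontrivial linear character of the $2'$-group $V/N$; a nontrivial $\sigma$-moved odd-degree $\chi\in\irr{G|\gamma}\subseteq B_0(G)$ produced by the $p=2$ analogue of Theorem 6.1 of \cite{NTT07} then contradicts the hypothesis. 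Steps~4--7, via the $p=2$, Galois-equivariant analogues of Theorem \ref{relativeGlauberman}, Theorem \ref{teoE}, Corollary \ref{thecor}, Corollary \ref{Tiep2} and of Theorem 3.2 of \cite{NT11}, reduce to the case $G=NP$ with $N$ a direct product of isomorphic non-abelian simple groups of even order. This last case is the $p=2$ analogue of Theorem \ref{simple}: for $S\nor G$ non-abelian simple of even order with $\cent GS=1$ and $G/S$ a $2$-group, $G$ has a self-normalizing Sylow $2$-subgroup if and only if $B_0(G)$ contains no $\sigma$-moved odd-degree irreducible character; one then finishes exactly as in Corollary \ref{Tiep2}.

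The main obstacle is twofold. On the local side, the arithmetic facts underlying Theorems E and F and their corollaries for odd $p$ run in \emph{opposite} directions at $p=2$ (for odd $p$ every linear character of a $p'$-group is $p$-rational while a nontrivial linear character of a $p$-group is not; for $p=2$ it is the reverse), so the reductions must be rewritten so that $\sigma$-behaviour, not $p$-rationality, is propagated through Clifford theory, block covering and coprime action; in particular the naive ``uniqueness of a $\sigma$-invariant lift'' versions of Theorems E and F are false, and the required $p=2$, Galois-refined counterparts of \cite{NTV14} and \cite{NTT07} are not presently available. The more serious obstacle is the simple-groups input: it demands deciding, for every non-abelian simple group $S$ of even order and every $2$-group $G/S$ of outer automorphisms, whether $B_0(G)$ contains a $\sigma$-moved odd-degree character precisely when $\norm GP\ne P$. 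For $S$ of Lie type in odd characteristic this is essentially a principal-$2$-block instance of the McKay--Navarro conjecture and needs the exact action of $\sigma$ on odd-degree semisimple and unipotent characters of $B_0$ and on their extensions by diagonal, field and graph automorphisms; for $S$ alternating or sporadic it is a finite but delicate check; and for $S$ of Lie type in characteristic $2$, unlike the odd-$p$ situation in Theorem \ref{defi} where a semisimple character of $r'$-degree does the job, the defining-characteristic case is itself genuinely hard. It is this simple-groups step, together with the missing Galois-refined local machinery it rests on, that currently puts the conjecture out of reach.
\end{rem}
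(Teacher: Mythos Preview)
The statement is a \emph{conjecture}, and the paper does not prove it; there is no ``paper's own proof'' to compare against. What the paper offers in its accompanying Remark is evidence in a list of special cases: solvable groups (via Isaacs' natural bijection and a direct Galois argument), $\SSS_n$, groups with self-normalizing Sylow $2$-subgroups (conditionally on the Galois--McKay refinement), $\GL_n(q)$ and $\GU_n(q)$ for both even and odd $q$, and $\Sp_{2n}(q)$ with $q\equiv\pm3\ (\mathrm{mod}\ 8)$. It then notes that a reduction to almost simple groups has been carried out in \cite{NV}.

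Your proposal is not a proof either, and you are explicit about this: it is an outline of how one would \emph{try} to imitate the proof of Theorem~\ref{main} at $p=2$, together with a diagnosis of why that strategy currently fails. As such it is a genuinely different contribution from the paper's Remark. The paper verifies the conjecture in concrete families by direct character-theoretic computations; you instead sketch the global inductive machine and locate the two bottlenecks --- the missing $p=2$ Galois-equivariant analogues of the local extension/correspondence results (Theorems~E, F, \cite{NTV14}, \cite{NTT07}), and the almost-simple input replacing Theorem~\ref{simple}. Your identification of the ``reversed arithmetic'' at $p=2$ (linear characters of $2$-groups are $\sigma$-fixed, those of nontrivial $2'$-groups are $\sigma$-moved) as the reason the odd-$p$ uniqueness arguments break is correct and is exactly why the paper calls a proof ``presently out of reach''. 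Your second obstacle, the simple-groups step, is precisely what the paper's cited reduction \cite{NV} isolates.

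In short: there is no gap to name because you do not claim a proof; your remark and the paper's remark are complementary --- theirs supplies evidence by verifying instances, yours explains structurally what a full proof would require and why it is not yet available.
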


\begin{rem}
We offer some evidence in support of Conjecture \ref{Conj2}, which includes all finite solvable, symmetric, and general linear or unitary groups. 

\begin{enumerate}[\rm(i)]

\item
Suppose that $G$ is solvable.
Let $L=\oh{2'} G$. Then it is well-known that
$$\irr{B_0(G)}=\irr{G/L}.$$ 
Since $\norm GP$ has
a normal 2-complement if and only if $\norm{G/L}{PL/L}$
has a normal 2-complement, we may assume that $L=1$.
We know by the main result in \cite{I0} that there
is a natural bijection 
${\rm Irr}_{2'}(G) \rightarrow {\rm Irr}_{2'}(\norm GP)$
that commutes with Galois action.
Hence it is no loss to assume that $P \nor G$.
Assume now that $G$ has a normal 2-complement.
Then $G$ is a $2$-group, and we are done in this case.
Conversely, if all the odd-degree irreducible characters
of $G$ are $\sigma$-invariant, then all
characters of $G/P$ are $\sigma$-invariant.
Then $G=P$ by Lemma 5.1 of  \cite{N2}.

\item Suppose $G = \SSS_n$. Then $P \in \Syl_2(G)$ is self-normalizing, and certainly all 
$\chi \in \Irr(G)$ are rational-valued, hence $\sigma$-invariant.

\item More generally, suppose that $G$ is any finite group with self-normalizing Sylow $2$-subgroups. 
Then a consequence of the Galois refinement of the McKay conjecture \cite{N2} implies that all odd-degree
irreducible characters of $G$ are $\sigma$-invariant. (A reduction of this statement to quasisimple groups
has been given in \cite[Theorem 5.1]{NT5} and \cite[Theorem 3.7]{Sch}.)

\item Let $G = \GL_n(q)$ with $2|q$ and $P \in \Syl_2(G)$, chosen to be the subgroup of upper unitriangular 
matrices in $G$. Then $\bfN_G(P) = P\rtimes T$, where $T$ is the subgroup of diagonal matrices in $G$. In 
particular, $\bfN_G(P)$ has a normal $2$-complement precisely when $q=2$. The degree formula for 
unipotent characters \cite[\S13.8]{C} shows that the only unipotent character of  $\GL_k(q^l)$ of odd degree is the 
principal character. Hence Lusztig's parametrization of irreducible characters of $G$ \cite{C}, \cite{DM} 
implies that $\chi \in \Irr(G)$ has odd degree precisely when it is the semisimple character $\chi_s$ labeled
by a semisimple element $s \in G$ (if we identify the dual group $G^*$ with $G$). 
Arguing as in the proof of \cite[Lemma 9.1]{NT1}, one can show that $\chi_s$ is $\sigma$-invariant exactly when
$\chi_s = \chi_{s^2}$, i.e. when $s^2$ and $s$ are $G$-conjugate. Furthermore, \cite[Theorem, p. 69]{Hum}
implies that $\chi_s$ belongs to the principal block of $G$ precisely when $\chi_s$ is trivial at $\bfZ(G)$, 
which, by \cite[Proposition 4.5]{NT4}, is equivalent to that $s \in [G,G] = \SL_n(q)$. 
Now it is straightforward to check that $s^2$ and $s$ are $G$-conjugate
for all semisimple elements $s \in \SL_n(q)$ if and only if $q=2$. Thus Conjecture \ref{Conj2} holds in this case. 

A similar argument, applied to $\GU_n(q)$ with $2|q$, shows that Conjecture \ref{Conj2} holds in 
this case as well.

\item Let $G = \GL_n(q)$ with $q$ odd and $n \geq 2$. By \cite[Theorem 2.5]{GKNT},
if $\chi \in \Irr(G)$ has odd degree, then 
$$\chi = S(s_1,\lambda_1) \circ S(s_2,\lambda_2) \circ \ldots \circ S(s_m,\lambda_m)$$
in James' notation \cite{J}, where $s_i \in \FF_q^\times$ are pairwise distinct, 
$\lambda_i \vdash k_i$, $\sum^m_{i=1}k_i = n$, and 
$$[n]_2 = [k_1]_2 < [k_2]_2 < \ldots < [k_m]_2,$$
if $[a]_2$ denotes the $2$-part of any $a \in \NN$. Furthermore, results of Fong and Srinivasan 
\cite{FS} imply that such a character belongs to the principal $2$-block of $G$ only when 
all $s_i$ are $2$-elements. Note that in this case $S(s_i,\lambda_i)$ is a product of the 
rational-valued (unipotent) character $S(1,\lambda_i)$ of $\GL_{k_i}(q)$ with a linear character of $2$-power order of
$\GL_{k_i}(q)$, whence it is $\sigma$-invariant. Since $\chi$ is obtained from the character 
$$S(s_1,\lambda_1) \otimes S(s_2,\lambda_2) \otimes \ldots \otimes S(s_m,\lambda_m)$$
of the Levi subgroup
$$\GL_{k_1}(q) \times \GL_{k_2}(q) \times \ldots \times \GL_{k_m}(q)$$
by Harish-Chandra induction, it follows that $\chi$ is $\sigma$-invariant. On the other hand,
$\bfN_G(P)$ has a normal $2$-complement if $P \in \Syl_2(G)$, 
see e.g. \cite[(5.3), (5.5)]{GKNT}).

In fact, we note that \cite[Theorem E]{GKNT} implies that Conjecture \ref{Conj2} also holds for 
$\GU_n(q)$ whenever $q$ is odd.

\item Let $G = \Sp_{2n}(q)$ with $q \equiv \pm 3 (\mod 8)$. As shown in the proof of \cite[Theorem 1]{Ko},
the normalizer of $P \in \Syl_2(G)$ contains $\SL_2(3)$ as a subgroup, and so $\bfN_G(P)$ does not
have a normal $2$-complement. It is well known, see eg. \cite[\S2]{TZ}, that $G$ has a pair of 
the so-called {\it Weil characters} $\xi_n,\eta_n \in \Irr(G)$ of degree $(q^n \pm 1)/2$, such that
the restriction of $\xi_n$ to $2'$-elements of $G$ equals to the restriction of $1_G + \eta_n$ to 
$2'$-elements of $G$. In particular, they belong to the principal $2$-block of $G$, and one of them
has odd degree. Inspecting the values of 
$\xi_n$ and $\eta_n$ at a transvection $t \in G$ \cite[Lemma 2.6]{TZ}, one can check that neither $\xi_n$ nor
$\eta_n$ is $\sigma$-invariant.
\end{enumerate}

Certainly, the arguments given in (iv)--(vi) also apply to many other finite groups of Lie type. We also note that
Conjecture \ref{Conj2} has now been reduced to almost simple groups, see \cite{NV}.
\end{rem}

\noindent
{\bf Acknowledgements.}~~We thank Gunter Malle for useful conversations
on this paper, and for kindly providing us with the proof of Theorem 5.4. We also are grateful to the referee for 
helpful comments on the paper.

\medskip

\end{document}